\documentclass[a4paper]{amsart}

\usepackage[utf8]{inputenc}
\usepackage[T1]{fontenc}
\usepackage{lmodern, enumerate}
\usepackage{amssymb,amsxtra,amscd,amsmath}
\usepackage{amsthm}
\usepackage{stmaryrd}
\usepackage{amsfonts}
\usepackage{tikz-cd}
\usepackage{bbm}
\usepackage[all]{xy}
\usepackage{xcolor}
\usepackage{nicefrac,mathtools}
\usepackage[inline]{enumitem}
\usepackage{microtype}
\usepackage{comment}
\usepackage{csquotes}
\usepackage[
  pdftitle={A groupoid model for Rordam's finite--infinite C*-algebra},
  pdfauthor={Diego Mart\'{i}nez},
  pdfsubject={Mathematics},
  colorlinks=true, linkcolor=blue, linkbordercolor=blue, citecolor=red, citebordercolor=red, urlcolor=blue, linktocpage=true
]{hyperref}
\usepackage[capitalise, nameinlink, noabbrev, nosort]{cleveref} 
\usepackage[lite]{amsrefs}

\newtheorem{alphthm}{Theorem}     

\newtheorem{alphcor}[alphthm]{Corollary}           

\setlist[enumerate]{font=\normalfont}
\crefname{enumi}{}{} 
\crefname{enumi}{}{} 
\setlist[enumerate]{label=(\roman*)}

\numberwithin{equation}{section}
\theoremstyle{plain}
\newtheorem{theorem}[equation]{Theorem}
\newtheorem{lemma}[equation]{Lemma}
\newtheorem{proposition}[equation]{Proposition}
\newtheorem{corollary}[equation]{Corollary}

\theoremstyle{definition}
\newtheorem{definition}[equation]{Definition}
\theoremstyle{remark}
\newtheorem{remark}[equation]{Remark}

\newtheorem{notation}[equation]{Notation}

\newtheorem{question}[equation]{Question}
\newtheorem{convention}[equation]{Convention}
\newtheorem{assumption}[equation]{Assumption}

\crefname{theorem}{Theorem}{Theorems}
\crefname{alphthm}{Theorem}{Theorems}
\crefname{proposition}{Proposition}{Propositions}
\crefname{lemma}{Lemma}{Lemmas}
\crefname{claim}{Claim}{Claims}
\crefname{remark}{Remark}{Remarks}
\crefname{conjecture}{Conjecture}{Conjectures}
\crefname{corollary}{Corollary}{Corollaries}
\crefname{question}{Question}{Questions}
\crefname{conjecture}{Conjecture}{Conjectures}
\crefname{fact}{Fact}{Facts}
\crefname{claim}{Claim}{Claims}
\crefname{case}{Case}{Cases}
\crefname{convention}{Convention}{Conventions}

\calclayout

\NewDocumentCommand{\CC}{}{\mathbb{C}}

             \NewDocumentCommand{\NN}{}{\mathbb{N}}

            \NewDocumentCommand{\TT}{}{\mathbb{T}}

             \NewDocumentCommand{\ZZ}{}{\mathbb{Z}}

\NewDocumentCommand{\CA}{}{\mathcal{A}}            \NewDocumentCommand{\CB}{}{\mathcal{B}}
\NewDocumentCommand{\CCC}{}{\mathcal{C}}           
            \NewDocumentCommand{\CF}{}{\mathcal{F}}
\NewDocumentCommand{\CG}{}{\mathcal{G}}            \NewDocumentCommand{\CH}{}{\mathcal{H}}
            
\NewDocumentCommand{\CK}{}{\mathcal{K}}            
\NewDocumentCommand{\CM}{}{\mathcal{M}}            
\NewDocumentCommand{\CO}{}{\mathcal{O}}

\NewDocumentCommand{\CU}{}{\mathcal{U}}            \NewDocumentCommand{\CV}{}{\mathcal{V}}
            
            \NewDocumentCommand{\CZ}{}{\mathcal{Z}}

\NewDocumentCommand{\fa}{}{\mathfrak{a}}            \NewDocumentCommand{\fb}{}{\mathfrak{b}}
\NewDocumentCommand{\fc}{}{\mathfrak{c}}            \NewDocumentCommand{\fd}{}{\mathfrak{d}}

\NewDocumentCommand{\fs}{}{\mathfrak{s}}

\newcommand{\CGo}{\mathcal{G}^{(0)}}

\NewDocumentCommand{\Egrp}{omm}{%
  \IfNoValueTF{#1}
    {E\left(#2,#3\right)}
    {E_{#1}\left(#2,#3\right)}
}

\NewDocumentCommand{\KKgrp}{omm}{%
  \IfNoValueTF{#1}
    {KK\left(#2,#3\right)}
    {KK_{#1}\left(#2,#3\right)}
}
\NewDocumentCommand{\Kgrp}{om}{%
  \IfNoValueTF{#1}
    {K_\star\left(#2\right)}
    {K_{#1}\left(#2\right)}
}

\NewDocumentCommand{\variable}{}{-}

\NewDocumentCommand{\diagelms}{}{C_{{\sigma}}\left(Z, \ell^\infty\left(\NN\right)\right)}

\NewDocumentCommand{\hkindalg}{om}{%
  \IfNoValueTF{#1}
    {\CCC_0\left(#2\right)}
    {\CCC_{0,#1}\left(#2\right)}
}
\NewDocumentCommand{\hkalg}{om}{%
  \IfNoValueTF{#1}
    {\CA\left(#2\right)}
    {\CA_{ {#1} }\left(#2\right)}
}

\NewDocumentCommand{\hkhilb}{om}{%
  \IfNoValueTF{#1}
    {\CH(#2)}
    {\CH_{#1}(#2)}
}

\NewDocumentCommand{\fhalg}{O{h}m}{%
  \IfNoValueTF{#1}
    {\CF\left(#2\right)}
    {\CF_{#1}\left(#2\right)}
}
\NewDocumentCommand{\schwartzfuncs}{om}{%
  \IfNoValueTF{#1}
    {\fs\left(#2\right)}
    {\fs\left(#1,#2\right)}
}
\NewDocumentCommand{\delmaux}{}{\alpha}
\NewDocumentCommand{\ddelmaux}{}{\beta}
\NewDocumentCommand{\delm}{o}{%
  \IfNoValueTF{#1}
    {\delmaux}
    {\delmaux^{#1}}
}
\NewDocumentCommand{\reddelm}{o}{%
  \IfNoValueTF{#1}
    {\delmaux_{\red}}
    {\delmaux^{#1}_{\red}}
}
\NewDocumentCommand{\maxdelm}{o}{%
  \IfNoValueTF{#1}
    {\delmaux_{{\rm max}}}
    {\delmaux^{#1}_{{\rm max}}}
}
\NewDocumentCommand{\ddelm}{o}{%
  \IfNoValueTF{#1}
    {\ddelmaux}
    {\ddelmaux^{#1}}
}
\NewDocumentCommand{\redddelm}{o}{%
  \IfNoValueTF{#1}
    {\ddelmaux_{\red}}
    {\ddelmaux^{#1}_{\red}}
}
\NewDocumentCommand{\maxddelm}{o}{%
  \IfNoValueTF{#1}
    {\ddelmaux_{{\rm max}}}
    {\ddelmaux^{#1}_{{\rm max}}}
}

\NewDocumentCommand{\crossedprod}{omm}{%
  \IfNoValueTF{#1}
    {#2 \rtimes #3}
    {#2 \rtimes_{#1} #3}
}
\NewDocumentCommand{\algcrossedprod}{omm}{%
  \IfNoValueTF{#1}
    {#2 \rtimes_{{\rm alg}} #3}
    {#2 \rtimes^{#1}_{{\rm alg}} #3}
}
\NewDocumentCommand{\redcrossedprod}{omm}{%
  \IfNoValueTF{#1}
    {#2 \rtimes_{\red} #3}
    {#2 \rtimes^{#1}_{\red} #3}
}
\NewDocumentCommand{\maxcrossedprod}{omm}{%
  \IfNoValueTF{#1}
    {#2 \rtimes_{\full} #3}
    {#2 \rtimes^{#1}_{\full} #3}
}

\NewDocumentCommand{\cstar}{}{\texorpdfstring{\(C^*\)\nobreakdash-\hspace{0pt}}{*-}}
\NewDocumentCommand{\Star}{}{\texorpdfstring{\(^*\)\nobreakdash-\hspace{0pt}}{*-}}

\NewDocumentCommand{\norm}{m}{\left\|#1\right\|}

\NewDocumentCommand{\abs}{m}{\left|#1\right|}
\NewDocumentCommand{\innerprod}{mm}{\langle #1, #2 \rangle}
\NewDocumentCommand{\linnerprod}{omm}{%
  \IfNoValueTF{#1}
    {_{\rm L}\innerprod{#2}{#3}}
    {_{#1}\innerprod{#2}{#3}}
}
\NewDocumentCommand{\rinnerprod}{omm}{%
  \IfNoValueTF{#1}
    {\innerprod{#2}{#3}_{\rm R}}
    {\innerprod{#2}{#3}_{#1}}
}

\NewDocumentCommand{\red}{}{\rm red}
\NewDocumentCommand{\full}{}{\rm max}

\NewDocumentCommand{\alg}{}{\rm alg}
\NewDocumentCommand{\redalg}{om}{%
  \IfNoValueTF{#1}
    {C^*_{\red}\left(#2\right)}
    {C^*_{\red,#1}\left(#2\right)}
}
\NewDocumentCommand{\redalgnopar}{om}{%
  \IfNoValueTF{#1}
    {C^*_{\red}(#2)}
    {C^*_{\red,#1}(#2)}
}
\NewDocumentCommand{\maxalg}{om}{%
  \IfNoValueTF{#1}
    {C^*_{\full}\left(#2\right)}
    {C^*_{\full,#1}\left(#2\right)}
}             
\NewDocumentCommand{\maxalgnopar}{om}{%
  \IfNoValueTF{#1}
    {C^*_{\full}(#2)}
    {C^*_{\full,#1}(#2)}
}

\NewDocumentCommand{\algalg}{om}{%
  \IfNoValueTF{#1}
    {\CC_{\alg}\left(#2\right)}
    {\CC_{\alg,#1}\left(#2\right)}
}
\NewDocumentCommand{\algalgnopar}{om}{%
  \IfNoValueTF{#1}
    {\CC_{\alg}(#2)}
    {\CC_{\alg,#1}(#2)}
}

\NewDocumentCommand{\singideal}{om}{%
  \IfNoValueTF{#1}
    {J_{#2}}
    {J_{#2}^{#1}}
}
\NewDocumentCommand{\condexp}{o}{%
  \IfNoValueTF{#1}
    {E}
    {E_{#1}}
}
\NewDocumentCommand{\redcondexp}{o}{%
  \IfNoValueTF{#1}
    {P}
    {P_{#1}}
}
\NewDocumentCommand{\esscondexp}{o}{%
  \IfNoValueTF{#1}
    {EL}
    {EL_{#1}}
}

\NewDocumentCommand{\tensor}{}{\otimes}

\NewDocumentCommand{\id}{o}{%
  \IfNoValueTF{#1}
    {{\rm id}}
    {{\rm id}_{#1}}
}

\begin{document}

\title[A groupoid model for R\o rdam's finite--infinite \cstar{}algebra]{A groupoid model for R\o rdam's finite--infinite \cstar{}algebra}

\author[Drieke Keuppens]{Drieke Keuppens \(^{1}\)}
\address{Department of Mathematics, KU Leuven, Celestijnenlaan 200B, 3001 Leuven, Belgium.}
\email{drieke.keuppens@student.kuleuven.be}

\author[Diego Mart\'{i}nez]{Diego Mart\'{i}nez \(^{2}\)}
\address{Department of Mathematics, KU Leuven, Celestijnenlaan 200B, 3001 Leuven, Belgium.}
\email{diego.martinez@kuleuven.be}

\begin{abstract}
  We construct a groupoid model for R\o rdam's example of a simple, nuclear, separable \cstar{}algebra containing a finite and an infinite projection.
  This groupoid model arises from a (crossed product of a) carefully chosen model for the inductive limit construction in R\o rdam's original approach, which we show to yield a Cartan subalgebra of the limit.
  In particular, this procedure yields a locally compact, second countable, Hausdorff, \'etale, amenable, minimal and topologically principal groupoid whose unit space is compact, and whose \cstar{}algebra contains an infinite and a non-zero finite projection.
  Hence, this groupoid is amenable, and yet does not have comparison.
\end{abstract}

\subjclass[2020]{46L35, 46L55, 22A22}

\keywords{Groupoid; Cartan subalgebra; Finite/Infinite projections; Comparison}

\thanks{{\(^{2}\)} Supported by projects G085020N and 1218726N funded by the Research Foundation Flanders (FWO)}

\maketitle

\section{Introduction} \label[section]{sec:intro}

Within the study of Operator Algebras there are (at least) two different major research currents.
On the one hand, an ``abstract'' approach tackles the study, classification and defining properties of arbitrary \cstar{}algebras and their morphisms (usually \Star{}homomorphisms).
On the other, a more ``concrete'' approach addresses the construction and possible descriptions of different classes of \cstar{}algebras and their morphisms.
Typically speaking, the \emph{Elliott Classification Program} (see \cite{TikuisisAaron2017QonC} and references therein) is contained in the former, while \emph{groupoids} and \emph{Cartan pairs} belong to the latter, see \cites{Renault2008CartanSI,sims2018etalegroupoidscalgebras} among many others.
In this paper we construct a groupoid model for R\o rdam's ``pathological'' \cstar{}algebra \(D \rtimes_\alpha \ZZ\) in \cite{rordamsalgebra}, hence bringing these two research currents closer.

\vspace{1mm}

\emph{Cartan subalgebras} were first introduced in the 1970's for von Neumann algebras by Feldman and Moore \cite{feldman-moore-1997-ii}. They defined a \emph{Cartan subalgebra} $B$ of $A$ as a maximal abelian subalgebra with a faithful conditional expectation from $A$ to $B$ and satisfying some regularity condition.
Translating from the von Neumann to the \cstar{}setting, Kumjian introduced \emph{\cstar{}diagonals} of \cstar{}algebras in \cite{kumjian-diagonals-1986}.
These are pairs with the \emph{unique extension property} (and some other properties), meaning that every pure state on a \cstar{}diagonal has a unique extension to a pure state on the ambient \cstar{}algebra.
Kumjian also proves that the existence of a $C^*$-diagonal corresponds to a \emph{twisted groupoid} model where the groupoid is \emph{principal} (see \cite{kumjian-diagonals-1986} or \cref{sec:preliminaries} below).
This latter reconstruction result was then generalized by Renault, who defined \emph{Cartan subalgebras} of \cstar{}algebras in a similar way to Kumjian, but without the unique extension property.
In this way, Renault also proves in \cite{Renault2008CartanSI} that there is a one-to-one correspondence between twists over groupoids and Cartan pairs. Having this result of Renault, a natural question to ask is the following. 
  
\begin{question}\label[question]{question: intro 1}
  Which \cstar{}algebras admit Cartan subalgebras, and thus groupoid models?
\end{question}
  
As it turns out, every \emph{classifiable} \cstar{}algebra (in the sense of the \emph{Elliott Classification Program}, more on this later) actually has a groupoid model, see \cite{li-classification-2020}.
This notwithstanding, there are examples of \cstar{}algebras without Cartan subalgebras. For instance, \cite{li_cartan_2017} proves $C^*_\lambda(\mathbb{F}_2)$ has no Cartan subalgebras, based on the analogue result of Voiculescu \cite{Voiculescu1996} for the von Neumann algebra $L(\mathbb{F}_2)$.

\vspace{1mm}

As mentioned, in this paper we find a groupoid model for the \cstar{}algebra $D\rtimes_\alpha \ZZ$ constructed by R\o rdam in \cite{rordamsalgebra}.
In fact, this \cstar{}algebra plays an important role in the Classification of \cstar{}algebras.
Classification of $C^*$-algebras first gained interest in the 1970's when Elliott wrote his seminal \cite{ELLIOTT197629}, which gives a set of invariants completely classifying \emph{AF algebras}.
In order to do this he used their \emph{K-theory}, \emph{i.e.}\ two abelian groups $K_0(A)$ and $K_1(A)$ associated to a \cstar{}algebra $A$.
Eventually, and after intense work of many people, this led to \emph{Elliott's Classification Program}, which aimed to classify all unital, simple, separable, nuclear \cstar{}algebras.
Later, in the 1990's, another important development was made by Kirchberg and Phillips in \cites{Kirchberg1994,phillips1997classificationtheoremnuclearpurely}, where they prove a classification result for \emph{Kirchberg algebras}, which are separable, simple, nuclear, purely infinite \cstar{}algebras.
The prototypical examples of these are the Cuntz algebras \(\CO_n\) from \cite{Cuntz:1977ut}.
Furthermore, for technical reasons, Kirchberg and Phillips added an extra condition: they only classified Kirchberg algebras satisfying the \emph{UCT}, or \emph{Universal Coefficient Theorem} \cite{rosenberg-schochet-1987-uct}.
This is a technical condition requiring the existence and exactness of some sequence involving K-theory and \emph{KK-theory}~\cites{TikuisisAaron2017QonC,Kasparov-1988}.
Even though this is a topic of great interest, for the purposes of this text the UCT plays essentially no role (it does play a role in \cite{rordamsalgebra} though).

Combining the results of Elliott and Kirchberg-Phillips gives a classification for some \emph{finite} and ``all'' \emph{purely infinite} \cstar{}algebras.
More formally, it gives a classification for all purely infinite \cstar{}algebras that are simple, separable, unital, nuclear and satisfy the UCT, and for some stably finite \cstar{}algebras satisfying these same conditions. Naturally, this led to the following question. 
  
\begin{question}\label[question]{question: intro 2}
  Is every nuclear, simple, unital, separable \cstar{}algebra satisfying the UCT either stably finite or purely infinite?
\end{question}

Interestingly, the answer is ``no''.
This was shown by R\o rdam in \cite{rordamsalgebra}, where he constructed the \cstar{}algebra $D\rtimes_\alpha\ZZ$ and proved that it has both an infinite and a nonzero finite projection.\footnote{\, Technically speaking, \(D \rtimes_\alpha \ZZ\) does not answer \cref{question: intro 2} negatively, for it is not unital. However, a certain corner of it, which we will denote by \(P(D \rtimes_\alpha \ZZ)P\), does.}
With this counterexample R\o rdam showed that Elliott's Conjecture does not hold in its full generality, see \cite{rordamsalgebra}*{Corollary~7.9}.
Eventually, however, this classification was fully realized by work of ``many hands'', see \cite{TikuisisAaron2017QonC}*{Corollary D}.
The needed ingredient was an extra condition which, when imposed on the \cstar{}algebras, would exclude counterexamples like that of R\o rdam's.
Hence, in order to be \textit{classifiable}, a \cstar{}algebra needs to be $\mathcal{Z}$-stable, where $\mathcal{Z}$ is the \emph{Jiang-Su algebra} of \cite{jiang_simple_1999}.
Notice that R\o rdam's algebra is not \(\CZ\)-stable \cite{rordamsalgebra}*{Corollary 7.5}, since all classifiable \cstar{}algebras are either stably finite or purely infinite.
The key feature of \(D \rtimes_\alpha \ZZ\) is precisely the aforementioned finite-vs-infinite behavior which, as it turns out, is at odds with \(\CZ\)-stability and with \emph{strict comparison} (more on this later).

\vspace{1mm}

In this paper we translate the construction of R\o rdam in \cite{rordamsalgebra} to the more concrete setting of groupoid \cstar{}algebras.
Particularly, we are interested in mirroring the finite-vs-infinite type behavior displayed by \(D \rtimes_\alpha \ZZ\) in a purely dynamical setting.
The following is hence the main result of the paper. We express it in terms of groupoids, but notice that, in order to show it we in fact do prove that R\o rdam's algebra \(D \rtimes_\alpha \ZZ\) in \cite{rordamsalgebra} has a well-behaved Cartan subalgebra whose structure we describe, see \cref{cor:cartan-model}.

\begin{alphthm}[see \cref{cor:thm-intro}] \label[theorem]{thm-intro}
    There is a locally compact, \'etale, Hausdorff, second countable, amenable, minimal, and topologically principal groupoid \(\CH\) with compact unit space such that the unit \(1 \in C^*_r(\CH)\) is infinite, but \(C^*_r(\CH)\) also contains a non-zero finite projection.
\end{alphthm}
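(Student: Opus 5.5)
The plan is to realise R\o rdam's algebra $D\rtimes_\alpha\ZZ$, or rather its unital corner $P(D\rtimes_\alpha\ZZ)P$, as a groupoid \cstar{}algebra. We do this by exhibiting a Cartan subalgebra and invoking Renault's reconstruction theorem \cite{Renault2008CartanSI}. The steps are as follows.

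\emph{Building $D$ with a compatible diagonal.} R\o rdam's $D$ is an inductive limit $D=\varinjlim D_n$ with $D_n = M_{k_n}(C(Z_n))\otimes\CK$, where each $Z_n$ is a product of spheres or a similar compact space. The connecting maps are of the form $f\mapsto \operatorname{diag}(f\circ\lambda_1,\dots,f\circ\lambda_m)$, with $\lambda_i$ coordinate projections or point evaluations. I would replace this by a carefully chosen but isomorphic model in which each $D_n$ carries the canonical diagonal $B_n = C(Z_n)\otimes \ell^\infty$-type diagonal masa. This is where the notation $\diagelms$ suggests $B\cong C_\sigma(Z,\ell^\infty(\NN))$. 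The connecting maps should send $B_n$ into $B_{n+1}$ and normalisers to normalisers. Then, following the standard criterion for inductive limits of Cartan pairs (Barlak--Li type results), $B=\varinjlim B_n$ is a Cartan subalgebra of $D$. The key requirement is that the connecting maps be injective and preserve the conditional expectations, so that the limit expectation is faithful. The main checks are that point-evaluation summands are placed diagonally, and that masality survives in the limit, which needs a topological-freeness argument on the spectrum.

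\emph{Passing to the crossed product.} Next I would arrange that R\o rdam's automorphism $\alpha$ (a shift-type automorphism scaling the trace/projection) can be chosen to satisfy $\alpha(B)=B$ and $\alpha$ normalising the Cartan structure. Then $B$ remains Cartan in $D\rtimes_\alpha\ZZ$, provided the induced action on $\widehat B$, composed with the groupoid of $(D,B)$, is topologically free. This yields $D\rtimes_\alpha\ZZ\cong C^*_r(\CG,\Sigma)$. Since all connecting maps are built from permutation-type partial isometries, the twist should be trivial, giving $C^*_r(\CG)$. Cutting down by the projection $P\in B$, one takes $\CH=\CG|_{U}$ for the compact open set $U$ with $1_U=P$. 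Then $C^*_r(\CH)\cong P(D\rtimes_\alpha\ZZ)P$ is unital, and $1=P$ is infinite while a subprojection is nonzero finite, by \cite{rordamsalgebra}.

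\emph{Verifying the groupoid properties.}
\begin{enumerate}
\item Second countability follows from separability.
\item Étaleness and Hausdorffness come from Renault's theorem, since the conditional expectation is onto an abelian subalgebra.
\item Amenability follows from nuclearity of $D\rtimes_\alpha\ZZ$ together with the groupoid result that nuclear $C^*_r$ plus Cartan implies amenable.
\item Minimality follows from simplicity.
\item Topological principality is built into Renault's construction.
\end{enumerate}

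The main obstacle is the first step: choosing the model so that the diagonal is genuinely preserved. R\o rdam's maps involve coordinate projections $Z^{m}\to Z$ and point evaluations, and these must be made compatible simultaneously with a single diagonal and with $\alpha$. I would try first to realise everything via maps of the form $f\mapsto \sum_i e_{ii}\otimes f\circ\lambda_i$ on an explicitly fixed infinite matrix unit system.
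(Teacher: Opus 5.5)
\paragraph{Main gap: the Bott twist.} Your plan breaks exactly at the step you flag as the main obstacle, and the problem is not bookkeeping. R\o rdam's $D$ is not an AH-type limit of algebras $M_{k_n}(C(Z_n))\tensor\CK$ with pull-back connecting maps. It is $\overline{\bigcup_n D_n}$, where $D_n=A_{-n}+\dots+A_n$ are iterated extensions of copies of $A=C(Z,\CK)$ sitting inside the non-separable limit of $\CM(A)\xrightarrow{\overline{\varphi}}\CM(A)\to\cdots$. For $j>0$ the map $\varphi_j(f)(x)=\tau\big(f(\lambda_j(x))\tensor p_{I_j}(x)\big)$, with $\lambda_j(x)=(c_{j,1},\dots,c_{j,j},x_{\nu(j,j+1)},\dots)$, carries a Bott twist. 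Afterwards $\psi=\sum_j S_j\varphi_j(\cdot)S_j^*$ is compressed by an isometry $T\in\CM(A)$ with $TT^*=F$.

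The range of $p_{I_j}$ is a non-trivial line bundle over $Z$: on the sphere along which only $x_{n_1}$ varies it is the Bott bundle. This defeats your plan, for the following reason.
\begin{enumerate}
\item Suppose $T^*\psi(a)T$ were diagonal for every $a\in C(Z,c_0(\NN))$. Then each $T(x)\delta_l$ would be a joint eigenvector of $\psi(C(Z,c_0(\NN)))(x)$.
\item On the dense set of $x$ where the points $\lambda_j(x)$ are pairwise distinct, these joint eigenspaces are exactly the lines $\CC s_j\delta_k$ ($j\le 0$) and $s_jU(\CC\delta_k\tensor{\rm range}\,p_{I_j}(x))$ ($j>0$).
\item Let $P_{j,k}(x)$ be the projection onto one of the latter lines. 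The functions $x\mapsto\norm{P_{j,k}(x)T(x)\delta_l}^2$ are continuous, $\{0,1\}$-valued on that dense set, and sum to $1$ over $l$.
\item Since $Z$ is connected, some $l$ makes $x\mapsto T(x)\delta_l$ a continuous unit section of a non-trivial line bundle, which is impossible.
\end{enumerate}
So no admissible $T$ makes $\overline{\varphi}$ map $C(Z,c_0(\NN))$ into $\diagelms$.

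Discarding the twist instead, i.e.\ using $f\mapsto\sum_j S_j(f\circ\lambda_j)S_j^*$ as you propose, changes the algebra. Then $\overline{\varphi}(p_1)\geq S_1p(c_{1,1})S_1^*$, a constant rank-one projection of $A$. Hence $\alpha(Q)$ dominates a projection equivalent to $P$, whose infiniteness only uses the summands $j\le 0$. So $Q\sim\alpha(Q)$ becomes infinite.

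You should also know that the paper attacks precisely this step with $V_j(x)\delta_l=U(\delta_l\tensor w_{I_j}(x))$. This formally gives $V_j^*\overline{\tau}\big(f(y)\tensor p_{I_j}(x)\big)V_j=f(y)$, which is exactly your pull-back form. But $w_{I_j}$ is discontinuous at points with some $x_{n_i}=\infty$. For $j=1$, $v_{(l,1)}(x)\to\pm\delta_{\phi(l,1)}$ as $x_{n_1}\to\pm\infty$ along $\mathbb{R}$. So the paper's claim that $V_j\in\CM(A)$ fails, and its argument does not close this gap either. A genuinely new idea is needed here.

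\paragraph{Further gaps.} The crossed-product step leaves its key hypothesis unverified. You write ``provided the induced action \dots\ is topologically free'' and then call topological principality ``built into Renault's construction''. That is circular: without topological freeness the diagonal need not even be maximal abelian in $D\rtimes_\alpha\ZZ$. The paper instead forms $\CG\rtimes_{\widetilde{\alpha}}\ZZ$ directly and derives topological principality from R\o rdam's Lemma~6.9. It uses a Kishimoto-condition and Baire-category argument showing that proper outerness of every $\alpha^n$, $n\neq 0$, forces density of the units $x$ with $\widetilde{\alpha}^n(x)\notin{\rm Orb}_\CG(x)$ for all $n\neq 0$.

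Likewise, ``built from permutation-type partial isometries, so the twist should be trivial'' is not an argument, and it misdescribes the maps. The paper splits each $\Sigma_n$ by transporting the section of the trivial twist of $(A,C(Z,c_0(\NN)))$. This uses the fact that normalizers of $\widetilde{D}_n$ move the characters $\phi_{x,k,i}$ only in the $k$-variable.

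The following parts of your argument do match the paper:
\begin{enumerate}
\item second countability from separability;
\item amenability from nuclearity;
\item minimality from simplicity;
\item the cut-down to $\CU$ with $\chi_\CU=P$, once $g$ is chosen diagonal.
\end{enumerate}
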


\cref{thm-intro} answers \cref{question: intro 1} positively for the pathological algebra answering \cref{question: intro 2} negatively.
As mentioned, a motivation to study \cref{thm-intro} comes from the relationship between \emph{strict comparison} and \emph{dynamical comparison}.
These are two different ways to ``compare'' positive elements: the former is in the \cstar{}algebra, while the latter is directly in the dynamical system.

Recall that a simple, separable, unital, nuclear \cstar{}algebra \(A\) is said to have \emph{strict comparison} (\emph{of positive elements}) if for all \(a, b \in (A \tensor \CK)_+\), if \(d_\tau(a) < d_\tau(b)\) for all tracial states \(\tau \colon A \to \CC\) implies there are \(\{v_n\}_{n \in \NN} \subseteq A \tensor \CK\) such that \(\norm{a - v_n b v_n^*} \to 0\), \emph{i.e.}\ \(a\) is \emph{Cuntz-subequivalent} to \(b\).\footnote{\, It is not relevant for our purposes, but \(d_\tau(a) \coloneqq \lim_{n \to\infty} \tau(a^{1/n})\) is the so-called rank function associated to \(\tau\). This is a way to measure the size of \(a\) regardless of the values of \(a\).}
Heuristically, this means that if \(a\) has ``less trace'' than \(b\) then one can cut \(b\) down to \(a\) via multiplication in the algebra.
Changing the point of view this amounts to saying that one may ``move'' \(a\) into \(b\) via elements in the \cstar{}algebra.
Notice that even if \(A \cong C_0(X) \rtimes_r \Gamma\) is a crossed product \cstar{}algebra, strict comparison is blatantly a purely \cstar{}algebraic notion: it talks about general positive elements and arbitrary conjugations, not only positive elements in \(C_0(X)\) and conjugations via \(\gamma \in \Gamma\).
The importance of the relationship between strict comparison and \(\CZ\)-stability cannot be overstated: it is one of the main insights of the Toms-Winter Conjecture. We refer the reader to \cites{winter-2012,kirchberg-rordam,matui-sato,rordam-2004,kerr-2019-comparison} and references therein for a glimpse of these ideas.

On the other hand, as introduced by Winter and then refined by Kerr in \cite{kerr-2019-comparison}, and extensively studied later, one may consider a \emph{dynamical} notion of comparison.
From this perspective (and talking about groupoids, see, \emph{e.g.}\ \cite{ma-comparison-2020}*{Definition 3.4}), given a locally compact, \'etale groupoid \(\CG\), we may ``compare'' two open sets \(\CU, \CV \subseteq \CGo\) of the unit space via open bisections.
Tracial states then correspond to \(\CG\) invariant probability measures, and one says that \(\CG\) has \emph{dynamical comparison} if a behavior similar to the one before holds.
Again, heuristically, this means that if \(\CU\) has ``less measure'' than \(\CV\) then one can rearrange \(\CU\) into \(\CV\) via open bisections.

It is often (but not always) the case that if \(\CG\) has dynamical comparison then \(C^*_r(\CG)\) has strict comparison, see \cite{kerr-2019-comparison}*{Theorems 9.2 and 12.4} and \cite{rordam-2004} for the typical way to show this.
It is however open up to what extent groupoid comparison is automatic (at least in the amenable case).
For instance, Glasner and Liu very recently showed in \cite{glasner-liu-2026}*{Theorem E} that every free action of an amenable group on a finite dimensional, compact metric space has dynamical comparison.
Likewise, \cite{geffen-hua-kranz-2026} proves that a certain class of well-behaved amenable groupoids do have comparison, whereas \cite{boldrini-prasad-2026} proves (non-constructively) there are non-amenable minimal group actions without comparison.
It turns out that the (amenable) groupoid \(\CH\) from \cref{thm-intro} lies on the opposite end of the spectrum: it does not have dynamical comparison at all, and its unit space is infinite dimensional.
We thank Spyridon Petrakos for pointing us to \cite{ma-comparison-2020}*{Theorem B} which, when paired with \cref{thm-intro}, yields the following.

\begin{alphcor}[see \cref{cor:thm-intro}] \label[corollary]{cor-intro}
    The amenable groupoid \(\CH\) in \cref{thm-intro} does not have comparison.
\end{alphcor}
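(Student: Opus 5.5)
The plan is to argue by contradiction, using \cref{thm-intro} together with \cite{ma-comparison-2020}*{Theorem B}. That theorem says the following. Let \(\CG\) be a locally compact, Hausdorff, second countable, \'etale groupoid that is minimal and topologically principal (or effective) and has compact unit space. If \(\CG\) has dynamical comparison in the sense of \cite{ma-comparison-2020}*{Definition 3.4}, then \(C^*_r(\CG)\) is either stably finite or purely infinite. More precisely, the dichotomy is governed by the existence of \(\CG\)-invariant probability measures on \(\CGo\):
\begin{itemize}
  \item if such a measure exists, then \(C^*_r(\CG)\) admits a faithful tracial state and is therefore stably finite;
  \item if no such measure exists, comparison forces every non-empty open subset of \(\CGo\) to be paradoxical, and \(C^*_r(\CG)\) is purely infinite.
\end{itemize}
The first step is to check the hypotheses of this theorem for \(\CH\). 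All of them are exactly the properties listed in \cref{thm-intro}. Amenability additionally ensures that the reduced and full groupoid \cstar{}algebras agree, so there is no ambiguity about which algebra is meant.

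The second step is to assume that \(\CH\) has comparison and derive a contradiction in each of the two cases.

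In the stably finite case, every projection in \(C^*_r(\CH)\) is finite. But \cref{thm-intro} states that the unit \(1 \in C^*_r(\CH)\) is infinite, which is a contradiction. Alternatively, one can argue directly: an invariant probability measure \(\mu\) of full support gives a faithful trace \(\tau_\mu = \mu \circ E\), where \(E\) is the canonical conditional expectation onto \(C(\CH^{(0)})\). Faithfulness of \(E\) together with full support of \(\mu\) makes \(\tau_\mu\) faithful. Then \(1\) cannot be infinite: \(v^*v = 1\) and \(vv^* \neq 1\) would give \(\tau_\mu(1 - vv^*) = 0\) with \(1 - vv^* \neq 0\). Full support itself follows from minimality, since the support of an invariant measure is a closed invariant set.

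In the purely infinite case, every non-zero projection is infinite. This contradicts the existence of the non-zero finite projection provided by \cref{thm-intro}.

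Since both cases are impossible, \(\CH\) cannot have comparison.

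The main obstacle is bookkeeping rather than new mathematics. One has to make sure that the exact hypotheses of \cite{ma-comparison-2020}*{Theorem B} match the properties in \cref{thm-intro}. Points to check are whether Ma requires effectiveness or topological principality, whether second countability or compactness of the unit space is needed, and whether the notion of comparison used there is the one meant in the corollary. In the second countable Hausdorff \'etale setting, topological principality and effectiveness coincide, which settles the first point. If Ma's dichotomy is phrased only for the reduced algebra, amenability removes the remaining ambiguity.
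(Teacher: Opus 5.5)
Your proposal is correct and follows essentially the same route as the paper: you apply \cite{ma-comparison-2020}*{Theorem B} to \(\CH\) and contradict its conclusion using the infinite unit and the non-zero finite projection of \(C^*_r(\CH) \cong P(D \rtimes_\alpha \ZZ)P\) coming from \cite{rordamsalgebra}*{Corollary~7.1}. The paper phrases the obstruction as the failure of strict comparison for this corner (no tracial state, infinite unit, yet a non-zero finite projection), which carries the same content as your observation that the algebra is neither stably finite nor purely infinite.
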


We remark a large part of the proof of \cref{cor-intro} is contained in \cite{rordamsalgebra}.
The groupoid \(\CH\) in \cref{thm-intro} is a model for the corner \(P(D \rtimes_\alpha \ZZ)P\) in \cite{rordamsalgebra}*{Corollary 7.1}, which is known to be ``pathological'' and not have strict comparison.\footnote{\, \cite{rordamsalgebra}*{Corollary 7.1} shows that \(P(D \rtimes_\alpha \ZZ)P\) is infinite, unital and simple, thus has no tracial state. Hence, if it had comparison then every projection would be infinite, for so is the unit \(P\). Nevertheless, \(P(D \rtimes_\alpha \ZZ)P\) has a non-zero finite projection.}
In fact, not only is \(\CH\) a model for \(P(D \rtimes_\alpha \ZZ)P\), it is also constructed using similar methods.
In particular, first we construct a model \(\CG\) for \(D\), then an action \(\widetilde{\alpha} \colon \ZZ \curvearrowright \CG\) dual to R\o rdam's action \(\alpha \colon \ZZ \curvearrowright D\), and then \(\CH\) will be a restriction of \(\CG \rtimes_{\widetilde{\alpha}} \ZZ\).
Notice, moreover, that this groupoid \(\CH\) admits no invariant probability measure, but it is \emph{not} purely infinite (in the groupoid sense).

\vspace{2mm}

The paper is organized as follows.
\cref{sec:preliminaries} contains all the preliminaries needed for the rest of the paper. Of particular interest is \cref{subsec:construction rordam}, where we go through the main milestones of the construction in \cite{rordamsalgebra} and settle notation needed for the rest of the paper.
In fact, there we see that \(D\), the \cstar{}algebra in the algebra of interest \(D \rtimes_\alpha \ZZ\), appears as a subalgebra of the inductive limit of a certain sequence \(\CM(A) \to \CM(A) \to \cdots\).
\cref{sec:Cartan} then exploits a possible choice for this sequence, and shows that it preserves some Cartan structure along the way.
This yields, see \cref{cor:D Cartan}, that \(D\) has a Cartan sub-algebra.
This sub-algebra is then used in \cref{sec:further structure} to produce a Cartan subalgebra of \(D \rtimes_\alpha \ZZ\) itself, which we do by showing that the action \(\alpha \colon \ZZ \curvearrowright D\) in \cite{rordamsalgebra} preserves our Cartan construction.
It is in \cref{sec:further structure} that \cref{thm-intro} is proven.

\vspace{3mm}

\noindent \textbf{Acknowledgments:} this work was done as part of the master thesis of the first-named author.
We would like to thank Spyridon Petrakos and Bartosz Kwa\'{s}niewski for useful comments on a previous draft.

\section{Preliminaries}\label[section]{sec:preliminaries}
This section recalls the preliminaries needed for the rest of the text.
We briefly go through \emph{\'etale groupoids} and their \emph{reduced} \cstar{}algebras in \cref{subsec:groupoid algebras}; and Renault's correspondence in \cref{subsec:groupoid algebras and Cartans}.
Lastly, \cref{subsec:construction rordam} recalls the main points in the construction of R\o rdam's \cite{rordamsalgebra}.

\subsection{Groupoids and their reduced \texorpdfstring{\cstar{}}{C*-}algebras}\label[subsection]{subsec:groupoid algebras}
We (very) briefly recall the notion of \emph{twisted \'etale} groupoid \((\CG, \Sigma)\), and how to construct a \cstar{}algebra out of such a pair \((\CG, \Sigma)\).
For a more comprehensive introduction we refer the reader to \cite{sims2018etalegroupoidscalgebras} and references therein.

A \emph{groupoid} \(\CG\) is a small category with inverses, \emph{i.e.}\ a set \(\CG\) together with an involution \(\gamma \mapsto \gamma^{-1}\), a distinguished set \(\CG^{(2)} \subseteq \CG \times \CG\) of \emph{composable pairs}, and an associative product \(\CG^{(2)} \to \CG\) sending \((\gamma_1, \gamma_2)\) to \(\gamma_1 \gamma_2\) satisfying the usual conditions: \((\gamma_1, \gamma_1^{-1}) \in \CG^{(2)}\); \((\gamma_1\gamma_2)^{-1} = \gamma_2^{-1} \gamma_1^{-1}\); and \(\gamma_1^{-1}\gamma_1 \gamma_2 = \gamma_2\)  and \(\gamma_1\gamma_2\gamma_2^{-1} = \gamma_1\)  whenever \(\gamma_1, \gamma_2 \in \CG\) and \((\gamma_1, \gamma_2) \in \CG^{(2)}\).
We say \(\CG\) is a \emph{topological} groupoid if it is equipped with a topology for which the involution and product are continuous.
The \emph{unit space} of \(\CG\), denoted by \(\CGo \subseteq \CG\), is the set \(\CGo \coloneqq \{\gamma^{-1} \gamma : \gamma \in \CG\} = \{x : (x,x) \in \CG^{(2)} \text{ and } x = x^2\}\).
The \emph{range} map is the map \(\gamma \mapsto \gamma \gamma^{-1}\), whereas the \emph{source} map is \(\gamma \mapsto \gamma^{-1} \gamma\).
Heuristically, an element \(\gamma \in \CG\) can be seen as \(\gamma = \gamma \gamma^{-1} \cdot \gamma \cdot \gamma^{-1} \gamma\), \emph{i.e.}\ an arrow from \(\gamma^{-1} \gamma\) to \(\gamma \gamma^{-1}\) (note one has to read from right to left).
This is a point of view we will follow in the rest of the text.

An open subset \(u \subseteq \CG\) is an (\emph{open}) \emph{bisection} if both the range and source maps are homeomorphisms from \(u\) onto (open) subsets of \(\CGo\).
The groupoid is \emph{\'etale} if its topology can be generated by open bisections.
Notice that if \(u, v \subseteq \CG\) are open bisections then so are
\[
    u \cdot v \coloneqq \{\gamma \rho : \gamma \in u, \rho \in v  \text{ and } (\gamma, \rho) \in \CG^{(2)}\} \hspace{2mm} \text{ and } \hspace{2mm} u^* \coloneqq \{\gamma^{-1} : \gamma \in u\}.
\]
\begin{assumption}
    $\CG$ will always be a second-countable, locally compact, Hausdorff and étale groupoid. 
\end{assumption}
Given a unit \(x \in \CGo\), we denote by \(\CG_x \coloneqq \{\gamma : \gamma^{-1} \gamma = x\}\) the arrows starting at \(x\). Similarly, \(\CG^x \coloneqq \{\gamma : \gamma\gamma^{-1} = x\}\) is the set of arrows ending at \(x\).
Lastly, the \emph{isotropy group at \(x\)} is \(\CG_x^x \coloneqq \CG_x \cap \CG^x\).
Notice that all \(\CG_x, \CG^x\) and \(\CG_x^x\) are discrete should \(\CG\) be \'etale.

We are mainly interested in \'etale groupoids that are \emph{principal}, \emph{topologically principal} or \emph{minimal}.

\begin{definition}\label[definition]{def:principal}
  A groupoid \(\CG\) is \emph{principal} if $\CG^x_x=\{x\}$ for all units $x\in\CGo$. Likewise, \(\CG\) is \emph{topologically principal} if the set of \(x \in \CGo\) with trivial isotropy is dense in \(\CGo\).
  Furthermore, \(\CG\) is \emph{minimal} if the orbit of every unit \(x \in \CGo\) is dense, \emph{i.e.}\ \({\rm Orb}_\CG(x) \coloneqq \{\gamma \gamma^{-1} : \gamma \in G_x\}\) is dense in \(\CGo\).
\end{definition} 

Clearly, a principal groupoid is also topologically principal, but the converse is false.
For technical reasons, and with Renault's correspondence in mind, we need to \emph{twist} \(\CG\). For this we follow the exposition in, for instance, \cite{Renault2008CartanSI}.
A \emph{twist} over \(\CG\) is a (non-\'etale) locally compact, Hausdorff groupoid \(\Sigma\) together with continuous groupoid morphisms \(\iota \colon \CGo \times \TT \to \Sigma\) and \(\pi \colon \Sigma \to \CG\) such that:
\begin{enumerate}
    \item both \(\iota\) and \(\pi\) restrict to homeomorphisms of the unit spaces;
    \item \(\iota\) is injective;
    \item \(\Sigma\) is a locally trivial \(\CG\)-bundle, \emph{i.e.}\  for all \(\gamma \in \CG\) there is an open bisection \(u \subseteq \CG\) containing \(\gamma\) and a continuous section \(s \colon u \to \Sigma\) such that the map
    \[
        u \times \TT \to \pi^{-1}(u), \;\; (\rho, z) \to \iota(\rho\rho^{-1}, z) s(\rho)
    \]
    is a homeomorphism;
    \item \(\iota(\sigma \sigma^{-1}, z) \sigma = \sigma \iota(\sigma^{-1} \sigma, z)\) for all \(z \in \TT\) and \(\sigma \in \Sigma\);
    \item and \(\pi^{-1}(\CGo) = \iota(\CGo \times \TT)\).
\end{enumerate}
Due to the fourth condition above note that we may define \(\sigma z \coloneqq \sigma \iota(\sigma^{-1} \sigma, z)\), in which case we have that \(\sigma z = z \sigma\). We will tacitly use this notation in the sequel.

\begin{remark}
    We often refer to a twist as a short exact sequence \(\CGo \times \TT \to \Sigma \to \CG\), or even a pair \((\CG, \Sigma)\).
\end{remark}

Given an \'etale (locally compact, second countable, Hausdorff) groupoid \(\CG\), we can construct its \emph{reduced} groupoid \cstar{}algebra. This is well known, and the following exposition is based on \cite{sims2018etalegroupoidscalgebras}. First, we define the convolution algebra $C_c(\CG)$ as the set
\begin{equation*}
  C_c(\CG)\coloneqq\{f:\CG\rightarrow\CC:f\text{ continuous and compactly supported}\},
\end{equation*}
with multiplication and involution given by \((f*g)(\gamma)\coloneqq\sum_{\alpha\beta=\gamma}f(\alpha)g(\beta)\) and \(f^*(\gamma)\coloneqq\overline{f(\gamma^{-1})}\).
Notice that this multiplication is well-defined for étale groupoids, for the sets $\CG^{y}$ and $\CG_{x}$ are both discrete.
Hence their intersections with the compact supports of $f$ and $g$ are finite, and thus the multiplication $(f*g)(\gamma)$ is defined by a finite sum.
For each unit $x\in\CGo$, we define the regular representation $\pi_x \colon C_c(\CG)\rightarrow B(l^2(\CG_x))$ of $C_c(\CG)$ associated to $x$ by
\begin{equation*}
  \pi_x(f)\delta_\gamma \coloneqq \sum_{\alpha\in\CG_{r(\gamma)}}f(\alpha)\delta_{\alpha\gamma},
\end{equation*}
where \(\{\delta_\gamma\}_{\gamma \in \CG_x} \subseteq l^2(\CG_x)\) is the usual orthonormal basis.
The \emph{reduced} groupoid \cstar{}algebra $C_r^*(\CG)$ is (by definition) the completion of the image
\(
  (\bigoplus_{x\in\CGo}\pi_x)(C_c(\CG))\subseteq \bigoplus_{x\in\CGo}B(l^2(\CG_x)).
\)
We refer to \cite{sims2018etalegroupoidscalgebras}*{Corollary~3.3.4} for a proof that $\bigoplus_{x\in\CGo}\pi_x$ is injective on \(C_c(\CG)\).

If \(\CGo \times \TT \to \Sigma \to \CG\) is a twist over \(\CG\) then a similar construction may be carried out to yield \(C_r^*(\CG, \Sigma)\).
Since this will play little role in the sequel we only refer the reader to \cite{sims2018etalegroupoidscalgebras}*{Theorem 5.1.11}, and only mention that the Hilbert spaces \(l^2(\CG_x)\) have to be exchanged with the spaces \(L^2(\CG_x, \Sigma_x)\) of square-integrable \(\TT\)-equivariant functions on \(\Sigma_x\).
Likewise, \(C^*_r(\CG, \Sigma)\) is a completion of
\[
    C_c(\CG, \Sigma) \coloneqq \{f \in C_c(\Sigma) : f(z \sigma) = z f(\sigma) \text{ for all } z \in \TT \text{ and } \sigma \in \Sigma\}, 
\]
\emph{i.e.}\ the algebra of continuous, compactly supported, \(\TT\)-equivariant functions on \(\Sigma\).

\begin{remark}
    A twist \(\CGo \times \TT \to \Sigma_{\rm trv} \to \CG\) is \emph{trivial} if it is \(\Sigma_{\rm trv} \cong \CG \times \TT\) and the maps \(\CGo \times \TT \to \Sigma_{\rm trv}\) and \(\Sigma_{\rm trv} \to \CG\) are the canonical ones. 
    In such case both constructions coincide: \(C_r^*(\CG) \cong C_r^*(\CG, \Sigma_{\rm trv})\).
    As later shown, the groupoid model we will construct for R\o rdam's algebra has a trivial twist. 
\end{remark}

\subsection{Cartan subalgebras and Renault's correspondence}\label[subsection]{subsec:groupoid algebras and Cartans}  
We here briefly recall the correspondence between twisted groupoid \cstar{}algebras and Cartan subalgebras of \cstar{}algebras, based on \cite{Renault2008CartanSI} (and partially on \cite{sims2018etalegroupoidscalgebras}).

Suppose $A$ is a \cstar{}algebra and $B \subseteq A$ is a $C^*$-subalgebra.
An element $n\in A$ is a \emph{normalizer} of $B$ if $nBn^*, n^*Bn \subseteq B$.
The collection of all normalizers of $B$ in $A$ is denoted by $N(B\subseteq A)$.
We say that $B$ is \emph{regular} in $A$ if $A$ is generated (as a \cstar{}algebra) by $N(B\subseteq A)$, which means $A = \overline{\text{span}}\,(N(B\subseteq A))$ (for $N(B\subseteq A)$ is multiplicatively closed).
A \emph{conditional expectation} is a completely positive, contractive projection $E \colon A \rightarrow B$ that is the identity on \(B\) and has the property that $E(bab')=bE(a)b'$ for all $a\in A$ and $b,b'\in B$.
It is \emph{faithful} if $E(a^*a)=0$ implies $a=0$ for all $a\in A$. Using these notions, we can recall the notion of \emph{Cartan sub-algebra}, appearing in \cite{Renault2008CartanSI}, and partially based on \cites{kumjian-diagonals-1986,feldman-moore-1997-i1}.
  
\begin{definition}[{see \cite{Renault2008CartanSI}*{Definition~5.1}}]\label[definition]{def:Cartan subalgebra}
  A $C^*$-subalgebra $B$ of a \cstar{}algebra $A$ is a \emph{Cartan subalgebra} if it is is maximally abelian, regular and there exists a faithful conditional expectation $E \colon A\rightarrow B$.
\end{definition}
  
Initially, the definition included the condition that $B$ needs to contain an approximate unit for $A$, but it was later found that this extra condition was redundant, see \cite{Pitts_2023}. If $B$ is a Cartan subalgebra of $A$, the pair $(A,B)$ is called a \emph{Cartan pair}. Notice that for $A$ unital, a Cartan subalgebra $B\subseteq A$ is necessarily unital.
Prior to Renault introducing Cartan subalgebras, Kumjian defined \emph{$C^*$-diagonals} in \cite{kumjian-diagonals-1986}. These are Cartan subalgebras that also have the \enquote{unique extension property}. 
\begin{definition}
  Let $B\subseteq A$ be an inclusion of \cstar{}algebras. $B$ has the \emph{unique extension property relative to $A$} if every pure state on $B$ has a unique extension to a pure state on $A$.
  Furthermore, \(B\) is a \emph{\cstar{}diagonal of \(A\)} if it is a Cartan subalgebra with the unique extension property.
\end{definition}

If $B$ is a \cstar{}diagonal of $A$, we say that $(A,B)$ is a \emph{diagonal pair}.
Notice that if \(E \colon A \to B\) is the conditional expectation of the pair \((A, B)\) and \(\varphi \colon B \to \CC\) is a pure state of \(B\) then \(\varphi \circ E \colon A \to \CC\) is a state of \(A\) extending \(\varphi\), so it is the unique such one.
In \cite{kumjian-diagonals-1986}, Kumjian proves a correspondence between diagonal pairs and \emph{principal} groupoids.
Similarly, in \cite{Renault2008CartanSI}, Renault proves a correspondence between Cartan pairs and \emph{topologically principal} groupoids.
The following is the combination of these.

\begin{theorem}[{see \cite{kumjian-diagonals-1986}*{Theorems~2.9 and~3.1} and \cite{Renault2008CartanSI}*{Theorem 5.9}}] \label[theorem]{theorem:correspondence Renault}
  There is a one-to-one correspondence between twists over second-countable, locally compact, Hausdorff and étale topologically principal groupoids and Cartan pairs $(A,B)$ with $A$ separable via
  \[\big(\CGo\times\TT\rightarrow \Sigma\rightarrow\CG\big)\leftrightarrow\big(C^*_r(\CG,\Sigma),C_0(\CGo)\big).\]
  Moreover, \(B\) is a \cstar{}diagonal in \(A\) if and only if \(\CG\) is principal.
\end{theorem}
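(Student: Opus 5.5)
This correspondence is the combination of Kumjian's and Renault's reconstruction theorems; I would prove it by recalling their two directions and then checking that the constructions are mutually inverse up to isomorphism.

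\emph{From twists to Cartan pairs.} Given a twist \(\CGo\times\TT\to\Sigma\to\CG\) over a topologically principal \(\CG\), I take \(A = C^*_r(\CG,\Sigma)\) and \(B = C_0(\CGo)\), which embeds as \(\TT\)-equivariant functions supported on \(\pi^{-1}(\CGo)=\iota(\CGo\times\TT)\). Separability follows from second countability. The expectation is restriction of functions to \(\pi^{-1}(\CGo)\); it is faithful because the regular representations \(\pi_x\) satisfy \(\langle \pi_x(f^*f)\delta_x,\delta_x\rangle = E(f^*f)(x)\). Regularity holds because functions in \(C_c(\CG,\Sigma)\) supported on \(\pi^{-1}(u)\), for an open bisection \(u\), normalize \(B\), and by a partition of unity such functions span a dense subspace. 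Maximal abelianness is exactly where topological principality enters. If \(a\) commutes with \(B\), then its associated function \(j(a)\) on \(\Sigma\) vanishes off \(\pi^{-1}\) of the interior of the isotropy. Since \(\CG\) is Hausdorff and étale and the trivial-isotropy units are dense, that interior is \(\CGo\), so \(a\in B\).

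\emph{From Cartan pairs to twists.} Given \((A,B)\), write \(B=C_0(X)\). Each normalizer \(n\) induces a partial homeomorphism \(\alpha_n\) from \(\{x : n^*n(x)>0\}\) onto \(\{x : nn^*(x)>0\}\). The Weyl groupoid \(\CG\) is the groupoid of germs \([x,\alpha_n,y]\), and the twist \(\Sigma\) consists of the finer equivalence classes \([x,n,y]\), where \(n\sim m\) if \(bn = b'm\) at \(y\) for some \(b,b'\in B\) not vanishing there. One checks that \(\CG\) is étale, Hausdorff and topologically principal, and that \(\Sigma\) is a twist over it. The last step is to show that \(A\cong C^*_r(\CG,\Sigma)\) via \(a\mapsto \hat a\), with \(\hat a([x,n,y]) = E(n^*a)(y)/\sqrt{n^*n(y)}\), carrying \(B\) onto \(C_0(\CGo)\).

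I expect this last step to be the main obstacle. Its hard points are Hausdorffness of \(\CG\), which needs the faithful expectation \(E\) together with maximal abelianness (Renault's Lemma 4.5-type argument), and injectivity of \(a\mapsto\hat a\), which uses faithfulness of \(E\). Compatibility of the two directions up to isomorphism then follows by computing the normalizers of \(C_0(\CGo)\) in \(C^*_r(\CG,\Sigma)\): they are essentially supported on open bisections, so the Weyl twist recovers \(\Sigma\).

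For the final clause, assume first that \(\CG\) is principal. A pure state of \(B\) is evaluation \(\mathrm{ev}_x\). Any extension \(\varphi\) of it satisfies \(\varphi(f)=0\) for every \(f\) supported on a bisection \(u\) with \(u\cap \CG^x_x=\emptyset\), by a Cauchy–Schwarz argument with a bump function \(b\) at \(x\). Since \(\CG^x_x=\{x\}\), this gives \(\varphi=\mathrm{ev}_x\circ E\), so the extension is unique. Conversely, suppose \(\gamma\in\CG^x_x\) with \(\gamma\neq x\). Then \(\mathrm{ev}_x\) has several extensions, for instance those coming from characters of the nontrivial isotropy group via the induced representation at \(x\), so the unique extension property fails. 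This is Kumjian's theorem.
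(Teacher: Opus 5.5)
The paper does not prove this statement; it quotes it from Kumjian and Renault. Your outline is a faithful sketch of exactly those arguments: the Weyl twist built from germs of the \(\alpha_n\), the formula \(\hat a([x,n,y]) = E(n^*a)(y)/\sqrt{n^*n(y)}\), Hausdorffness from the expectation plus maximal abelianness, injectivity from faithfulness, normalizers of \(C_0(\CGo)\) living on bisections, and the Cauchy--Schwarz/bump-function argument for unique extension when \(\CG\) is principal. You should also record that the Weyl groupoid is second countable because \(A\) is separable. This is needed for it to lie in the class of the statement. It is also needed for the Baire category argument that upgrades ``effective'' (automatic for a groupoid of germs) to ``topologically principal''.

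One step is justified incorrectly: the non-principal half of the last clause. Characters of the isotropy group do not in general give states on \(C^*_r(\CG,\Sigma)\). If \(\CG^x_x\) is non-amenable (e.g.\ it contains \(\mathbb{F}_2\)), its trivial character is not weakly contained in the regular representation, so it does not pass to the reduced algebra. If the twist restricted to \(\CG^x_x\) comes from a non-trivial cocycle, then \(\Sigma^x_x\) has no \(\TT\)-equivariant characters at all.

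The fix is to work inside \(\pi_x\). Take \(\gamma\in\CG^x_x\setminus\{x\}\) and the vector state \(a\mapsto\langle\pi_x(a)\xi,\xi\rangle\) with \(\xi=(\delta_x+\delta_\gamma)/\sqrt{2}\), lifted to \(\Sigma_x\) in the twisted case.
\begin{itemize}
\item It restricts to \(\mathrm{ev}_x\) on \(B\), because \(r(\gamma)=x\).
\item Choose a bisection \(u\ni\gamma\) with \(u\cap\CGo=\emptyset\), which exists since \(\CGo\) is closed. Let \(f\) be supported over \(u\) and non-vanishing over \(\gamma\). The vector state is non-zero on \(f\), whereas \(\mathrm{ev}_x\circ E\) kills \(f\).
\end{itemize}

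Finally, the unique extension property concerns \emph{pure} extensions, and so far you only have two distinct states. The state extensions of the pure state \(\mathrm{ev}_x\) form a weak\(^*\)-compact face of the state space of \(A\). Since this face has at least two points, Krein--Milman gives two distinct extreme points, and these are distinct pure extensions of \(\mathrm{ev}_x\).
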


\begin{remark}
The proof of \cref{theorem:correspondence Renault} actually explicitly constructs the pair \((\CG, \Sigma)\) out of the pair \((A, B)\).
Typically, these are called the \emph{Weyl groupoid} and \emph{twist} of the pair.
In \cref{subsec:trivial twist} we will need this explicit description, so we will omit the construction here.
\end{remark}

\subsection{Construction of R\o rdam's algebra}\label[subsection]{subsec:construction rordam}
We show how R\o rdam constructed the \cstar{}algebra $D\rtimes_\alpha \ZZ$, following \cite{rordamsalgebra}.
We will maintain the same notation as in \cite{rordamsalgebra}, as we will later need to make certain choices in the construction.

Let $Z \coloneqq \prod_{j = 1}^\infty S^2$ be the infinite product of 2-spheres. Likewise, let $A \coloneqq C(Z)\otimes\CK=C(Z,\CK)$, with $\mathcal{K}$ the compact operators on the infinite dimensional, separable Hilbert space \(\ell^2(\NN)\).
We will always see the elements of $A$ as continuous functions $Z \to \CK$.
From this point of view, elements $f \in \CM(A)$ can be identified with bounded $^*$-strongly continuous functions $f \colon Z\rightarrow \CB(\ell^2(\NN))$. 
  
R\o rdam defines an injective $^*$-homomorphism $\overline{\varphi} \colon \mathcal{M}(A)\rightarrow \mathcal{M}(A)$ as follows. First take a partition $\{\Lambda_r\}_{r=0}^\infty$ of $\NN$ such that $\Lambda_0=\{1\}$ and the other $\Lambda_r$ are infinite. For $r\geq 1$, choose an injective map $\gamma_r \colon \ZZ\times\Lambda_{r-1}\rightarrow\Lambda_r$. Using these maps, define an injective map $\nu \colon \ZZ\times\NN\rightarrow\NN$ by
\begin{equation*}
    \nu(j,t)\coloneqq\gamma_r(j,t) \;\; \text{ for } t\in\Lambda_{r-1} \text{ and } j\in \ZZ.
\end{equation*}
Choose points $c_{j,i}\in S^2$ for all $j,i\in\NN$ with $j\geq i$ that satisfy
\begin{equation*}
  \overline{\{(c_{j,1},c_{j,2},...,c_{j,n}):j\geq n\}}=S^2\times S^2\times \cdots \times S^2
\end{equation*}
for every $n\in \NN$, which can be done since $S^2$ is separable. Define the sets 
\begin{equation*}
  I_j\coloneqq\{\nu(j,1),\nu(j,2), \dots, \nu(j,j)\}
\end{equation*}
for all $j\in \NN$. Using these choices, we define $^*$-homomorphisms $\varphi_j \colon A\rightarrow A$ for all $j$. For $j\leq 0$, set
\begin{equation}\label{eq:phi_j for j<0}
  \varphi_j(f)(x)\coloneqq f(x_{\nu(j,1)},x_{\nu(j,2)},x_{\nu(j,3)}, \dots),
\end{equation}
where $f\in A=C(Z,\CK)$ and $x=(x_1,x_2, \dots)\in Z = \prod_{j = 1}^\infty S^2$.
Whenever $j>0$ the $^*$-homomorphisms are defined in a different way.
For this, let $\tau \colon \CK\otimes\CK\rightarrow\CK$ be any \Star{}isomorphism, and let $p\in C(S^2,M_2(\CC))$ be the Bott projection.
For $n\in \NN$ and $I=\{n_1, \dots,n_k\}$ a finite subset of $\NN$, let $p_n$ and $p_I$ be the projections in $A$ defined by
\begin{align}
  p_n(x)&\coloneqq p(x_n),\label{eq:p_n}\\
  p_I(x)&\coloneqq p(x_{n_1})\otimes p(x_{n_2})\otimes \cdots \otimes p(x_{n_k}) =p_{n_1}(x)\otimes p_{n_2}(x)\otimes \cdots \otimes p_{n_k}(x) \label{eq:p_I}
\end{align}
for all $x=(x_1,x_2, \dots)\in Z = \prod_{j = 1}^\infty S^2$. (Notice here we are tacitly identifying \(M_2, M_2 \tensor M_2, \dots\) with sub-algebras of \(\CK\).)
The $^*$-homomorphism $\varphi_j$ for $j>0$ is now defined by
\begin{equation}\label{eq:phi_j for j>0}
  \varphi_j(f)(x)\coloneqq\tau\big(f(c_{j,1}, \dots,c_{j,j},x_{\nu(j,j+1)},x_{\nu(j,j+2)},\dots)\otimes p_{I_j}(x)\big).
\end{equation}

It is well known that there is some sequence of isometries $\{S_j\}_{j=-\infty}^\infty \subseteq \CM(A)$ with $\sum_{j=-\infty}^\infty S_jS_j^*=1$ (the sum being in the strict topology).
Nevertheless, with future computations in mind let us choose a particular one: take a partition $\{C_j\}_{j=-\infty}^\infty$ of $\NN$ such that $|C_j|=\infty$ for all $j$.
Let $\sigma_j \colon \NN\rightarrow C_j$ be a bijection for all $j$. We may let $S_j(x) \colon \ell^2(\NN)\rightarrow \ell^2(\NN)$ be $S_j(x)\delta_k \coloneqq \delta_{\sigma_j(k)}$ for all $k$ and all $x\in Z$. The map $S_j \colon Z\rightarrow \CB(\ell^2(\NN))$ is constant, so we will denote $S_j(x)$ by $s_j$ for all $x\in Z$. Using these isometries, and as stated in \cite{rordamsalgebra}*{Eq.~(5.5)}, the assignment
\begin{equation*}
  \psi(f)\coloneqq\sum_{j=-\infty}^\infty S_j\varphi_j(f)S_j^*
\end{equation*}
is a $^*$-homomorphism.
In fact, it is ``unital'' in the following way.

\begin{lemma}[{see \cite{rordamsalgebra}*{Lemma~5.1}}]\label[lemma]{lemma:F_j}
  Let $\{e_n\}_{n=1}^\infty$ be an increasing approximate unit for $A$. Then $\{\psi(e_n)\}_{n=1}^\infty$ converges strictly to a projection $F\in \CM(A)$, and $F$ is equivalent to the identity $1$ in $\CM(A)$.
\end{lemma}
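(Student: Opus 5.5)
The plan is to compute the strict limit of \(\psi(e_n)\) explicitly, summand by summand, and then exhibit an isometry implementing the equivalence \(F \sim 1\).

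\textbf{Step 1: the summands converge strictly.} For each \(j\) I first show that \(\{\varphi_j(e_n)\}_n\) converges strictly to a projection \(F_j \in \CM(A)\).
\begin{itemize}
\item For \(j \le 0\), \(\varphi_j\) is composition with the continuous surjection \(x \mapsto (x_{\nu(j,1)}, x_{\nu(j,2)}, \dots)\) of \(Z\). This is a non-degenerate \Star{}homomorphism, so \(\varphi_j(e_n) \to 1\) strictly and \(F_j = 1\).
\item For \(j > 0\), set \(\theta_j(x) \coloneqq (c_{j,1}, \dots, c_{j,j}, x_{\nu(j,j+1)}, \dots)\), so that \(\varphi_j(f) = \tau\bigl((f \circ \theta_j) \tensor p_{I_j}\bigr)\). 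Since \(f \mapsto f \circ \theta_j\) is non-degenerate on \(A\), \(e_n \circ \theta_j \to 1\) strictly. Multiplying against the fixed projection \(p_{I_j}\) and applying \(\tau\) (extended strictly continuously to multipliers), we get \(\varphi_j(e_n) \to F_j \coloneqq \tau(1 \tensor p_{I_j})\) strictly.
\end{itemize}
Here \(\tau\) is understood as the induced isomorphism \(C(Z,\CK \tensor \CK) \cong C(Z,\CK)\).

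\textbf{Step 2: the sum converges strictly.} Since the \(S_j\) have mutually orthogonal ranges summing strictly to \(1\), I claim
\[
  \psi(e_n) \to F \coloneqq \sum_{j} S_j F_j S_j^*
\]
strictly. To see this, fix \(a \in A\) and \(\varepsilon > 0\). Choose a finite set \(J\) with \(\bigl\|\bigl(1 - \sum_{j \in J} S_j S_j^*\bigr) a\bigr\| < \varepsilon\). The operators \(\psi(e_n)\) and \(F\) are contractions commuting with each range projection \(S_j S_j^*\), so
\[
  \bigl\|(\psi(e_n) - F)\, a\bigr\| \le 2\varepsilon + \sum_{j \in J} \bigl\|S_j(\varphi_j(e_n) - F_j) S_j^* a\bigr\|.
\]
Each of the finitely many terms on the right tends to \(0\) by Step 1, because \(S_j^* a \in A\). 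The same argument works for multiplication on the other side. Finally, \(F\) is a projection, being an orthogonal sum of projections.

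\textbf{Step 3: \(F \sim 1\).} For \(j \le 0\) we have \(S_j F_j S_j^* = S_j S_j^*\), so
\[
  F \ge \sum_{j \le 0} S_j S_j^*.
\]
Hence
\[
  1 \sim S_0 S_0^* \le F \le 1.
\]
Conversely, \(1 = \sum_{j \le 0} S_{j}^{\phantom{*}}\!\!\cdots\) is not needed directly: choosing a bijection \(\ZZ \to \{j \le 0\}\), the element
\[
  V \coloneqq \sum_{k \in \ZZ} S_{m(k)} S_k^*,
\]
for a bijection \(m\) of \(\ZZ\) onto \(\{j \le 0\}\), is a strictly convergent partial isometry with \(V^*V = 1\) and \(VV^* = \sum_{j \le 0} S_j S_j^* \le F\). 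Thus \(1 \lesssim F \le 1\), so \(1 \precsim F\) and \(F \precsim 1\).

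To upgrade this to Murray--von Neumann equivalence, I will instead build an explicit partial isometry. For \(j > 0\), \(F_j = \tau(1 \tensor p_{I_j})\) is a subprojection of \(1\), so it is equivalent in \(\CM(A)\) to some \(W_j W_j^*\) with \(W_j^* W_j = F_j\), for instance with \(W_j = F_j\). Then
\[
  U \coloneqq \sum_{j > 0} S_{j} F_j S_j^* + \sum_{k} S_{m'(k)} S_k^*
\]
is defined using an interleaving bijection \(m'\) of \(\ZZ\) onto the indices \(j \le 0\), after first splitting off the summands \(S_j S_j^*\), \(j \le 0\). Concretely: \(F\) is the orthogonal sum of \(Q \coloneqq \sum_{j \le 0} S_j S_j^* \sim 1\) and \(R \coloneqq \sum_{j > 0} S_j F_j S_j^*\). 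Since \(Q \sim 1\) and \(1 \oplus R \sim 1 \oplus 1 \sim 1\) in \(\CM(A)\) (using \(1 \sim 1 \oplus 1\) via \(S_0, S_1\), and \(R \le 1\)), a Cantor--Schröder--Bernstein-type absorption gives \(F = Q + R \sim 1\).

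\textbf{Main obstacle.} I expect the main obstacle to be this last absorption step, namely rigorously producing \(1 \sim 1 + R'\) inside \(\CM(A)\). The approach I would try first is to embed \(R\) into \(S_1 S_1^*\) and \(1\) into \(\sum_{j \ne 1} S_j S_j^*\), and then sum the corresponding partial isometries strictly.
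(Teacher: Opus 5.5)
Your Steps 1 and 2 are correct and match the paper's sketch. Your non-degeneracy argument for $j\le 0$ is in fact slightly more careful than the paper's ``$\varphi_j(e_n)=e_n$'', which only holds for approximate units that are constant in $x$.

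The gap is in Step 3. From $Q\coloneqq\sum_{j\le 0}S_jS_j^*\sim 1$ and $Q\le F\le 1$ you only get $1\precsim F\precsim 1$. A Cantor--Schr\"oder--Bernstein principle for Murray--von Neumann equivalence is not available in general $C^*$-algebras. For example, in $\CO_3$ the projection $s_1s_1^*+s_2s_2^*$ lies between $s_1s_1^*\sim 1$ and $1$, yet its $K_0$-class is $2[1]=0\neq[1]$. Your justification of the key claim $1\oplus R\sim 1$ is a non sequitur: $R\le 1$ only gives $1\oplus R\le 1\oplus 1$, not equivalence. The remedy you propose, embedding $R$ into $S_1S_1^*$ and $1$ into $\sum_{j\neq 1}S_jS_j^*$, again only yields $1\oplus R\precsim 1$. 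That brings you back to the same two-sided subequivalence. So, as written, $F\sim 1$ is not proved.

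What is missing is an Eilenberg swindle. It uses exactly the feature $\CO_3$ lacks: $Q$ is an infinite, strictly convergent sum of copies of $1$. Put $R=\sum_{j>0}S_jF_jS_j^*$, and note that $RS_j=0$ for $j\le 0$. Then the elements
\[
  w_\ast \coloneqq R+S_0(1-R), \qquad w_j\coloneqq S_jR+S_{j-1}(1-R) \quad (j\le 0)
\]
are isometries in $\CM(A)$. Their range projections $R+S_0(1-R)S_0^*$ and $S_jRS_j^*+S_{j-1}(1-R)S_{j-1}^*$ are mutually orthogonal and add up strictly to $R+\sum_{j\le 0}S_jS_j^*=F$. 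Choose a bijection $\kappa\colon\{\ast\}\sqcup\{j\le 0\}\to\ZZ$. The sum $T\coloneqq w_\ast S_{\kappa(\ast)}^*+\sum_{j\le 0}w_jS_{\kappa(j)}^*$ converges strictly, by the estimates of your Step 2, and satisfies $T^*T=1$ and $TT^*=F$. Kasparov's stabilization theorem would also work, but the swindle is elementary.

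Once repaired this way, your route genuinely differs from the paper's. The paper obtains $F\sim 1$ from isometries $V_j$ with $V_jV_j^*=F_j$ for each $j\ge 1$, summed as $T=\sum_{j\le 0}S_jS_j^*+\sum_{j\ge 1}S_jV_jS_j^*$. Your splitting $F=Q+R$ needs no information about $F_j$ for $j\ge 1$. This is a real advantage, because $F_j\sim 1$ is itself a stabilization statement. The paper's explicit $V_j$ is built from unit vectors $w_{I_j}(x)$ spanning the range of $p_{I_j}(x)$. These cannot be chosen continuously on all of $Z$, since the range of the Bott projection is a non-trivial line bundle. The paper's particular choice is discontinuous at points $x$ with $x_n=\infty$ for some $n\in I_j$.
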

\begin{proof}[Sketch of proof]
    For the sake of later reference we sketch the proof.
    For each \(j \leq 0\) one may readily show that \(\varphi_j(e_n) = e_n\), and thus \(\varphi_j(e_n) \to 1 \eqqcolon F_j\) strictly.
    On the other hand, if \(j \geq 1\) then \(\varphi_j(e_n)\) converges strictly to a projection \(F_j(x) \coloneqq \overline{\tau}(1 \tensor p_{I_j}(x))\), where \(x \in Z\).
    Here \(\overline{\tau} \coloneqq \CB(\ell^2(\NN) \tensor \ell^2(\NN)) \to \CB(\ell^2(\NN))\) is a strongly continuous extension the previously chosen isomorphism \(\tau \colon \CK \tensor \CK \to \CK\).
    One may then show that \(\psi(e_n) \to F \coloneqq \sum_{j = -\infty}^{\infty} S_j F_j S_j^*\), where again the sum is in the strict topology.
\end{proof}

\Cref{lemma:F_j} exactly states that there is some isometry $T \in \CM(A)$ with $TT^*=F$.
This allows to define a $^*$-homomorphism $\varphi \colon A\rightarrow\CM(A)$ via
\begin{equation} \label{eq:varphi-sec-two}
  \varphi(f)\coloneqq T^*\psi(f)T=\sum_{j=-\infty}^\infty T^*S_j\varphi_j(f)S_j^*T,
\end{equation}
see \cite{rordamsalgebra}*{Eq.~(5.6)}.
Notice that $\varphi$ maps an approximate unit $\{e_n\}_{n=1}^\infty$ for $A$ into a sequence in $\CM(A)$ that converges strictly to the identity in $\CM(A)$, which follows from \Cref{lemma:F_j} and the choice of $T$.
Now it follows from \cite{lance1995hilbert}*{Proposition~2.5} that this $^*$-homomorphism can be extended to a unital $^*$-homomorphism $\overline{\varphi} \colon \CM(A)\rightarrow\CM(A)$.
Explicitly, we may let
\begin{equation}\label{eq:phi bar}
  \overline{\varphi}(g)\coloneqq\lim_{n\rightarrow\infty}\varphi(e_ng) \;\; \text{ for } g\in \CM(A)
\end{equation}
for the approximate unit $\{e_n\}_{n=1}^\infty\subseteq A$ with $e_n(x)\coloneqq P_n=\sum_{i=1}^n e_{ii}$ for all $x\in Z$. Notice that any choice of approximate unit will yield the same map \(\overline{\varphi}\).
In \cite{rordamsalgebra}*{Proposition~5.2}, R\o rdam proves that \(A \cap \overline{\varphi}(\CM(A)) = 0\), so in particular $\overline{\varphi}$ is injective.
Iterating this procedure leads to a sequence 
\begin{equation} \label{eq:sequence-cm-varphi}
    \CM(A)\xrightarrow{\overline{\varphi}}\CM(A)\xrightarrow{\overline{\varphi}}\CM(A)\xrightarrow{\overline{\varphi}} \cdots
\end{equation}
of unital \cstar{}algebras with unital, injective connecting maps. Let \(B\) be the (neither separable nor exact) inductive limit of this sequence.
Likewise, let $\mu_{m,n}\coloneqq\overline{\varphi}^{(m-n)} \colon \CM(A)\rightarrow\CM(A)$ for $m \geq n$ be the connecting maps from the $n$-th copy of $\CM(A)$ to the $m$-th copy of $\CM(A)$.
From the universal property of $B$, there are inductive limit maps $\mu_{\infty,n} \colon \CM(A)\rightarrow B$.
Since $\overline{\varphi}$ is injective, these inductive limit maps are also injective.

Note that the map $\overline{\varphi}$ extends to an automorphism $\alpha$ on $B$ via $\alpha(\mu_{\infty,n}(x))\coloneqq\mu_{\infty,n}(\overline{\varphi}(x))$ for $x\in \CM(A)$ and $n\in\NN$. On the dense subset $\bigcup_{n=0}^\infty \mu_{\infty,n}(\CM(A))$ of $B$, the inverse of $\alpha$ is given by $\alpha^{-1}(\mu_{\infty,n}(x)) \coloneqq \mu_{\infty,n+1}(x)$. 
In particular, \(\alpha\) induces an action \(\ZZ \curvearrowright B\).
  
Now put $A_0\coloneqq\mu_{\infty,0}(A)\subseteq B$ and $A_n\coloneqq\alpha^n(A_0)\subseteq B$ for all $n\in\ZZ$ and define
\begin{equation*}
  D_n\coloneqq C^*(A_{-n},A_{-n+1}, \dots ,A_0, \dots ,A_{n-1},A_n) \hspace{3mm}\text{ and }\hspace{3mm}D\coloneqq\overline{\bigcup_{n=1}^\infty D_n}.
\end{equation*}
It can be proved that all $D_n$ are type I \cstar{}algebras, see \cite{rordamsalgebra}*{Lemma~6.6}, so $D$ is an inductive limit of type I \cstar{}algebras. Observe the automorphism \(\alpha \colon B \to B\) leaves \(D\) invariant, and thus the following is well posed.

\begin{definition}\label[definition]{def:rordams algebra}
  \emph{R\o rdam's algebra} is the crossed product $D\rtimes_\alpha\ZZ$.
\end{definition}

\begin{theorem}[{cf.\ \cite{rordamsalgebra}*{Theorem~6.10}}] \label[theorem]{theorem:rordams algebra}
  $D\rtimes_\alpha\ZZ$ is nuclear, separable, simple and satisfies the UCT.
    Moreover, it contains an infinite and a finite, non-zero projection.
\end{theorem}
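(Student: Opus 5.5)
This theorem is R\o rdam's main result, quoted from \cite{rordamsalgebra}*{Theorem~6.10}. The plan is therefore to assemble the properties from R\o rdam's construction, in the order in which they depend on one another, rather than to prove them from scratch.

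\emph{Separability and nuclearity.} Each \(A_n = \alpha^n(\mu_{\infty,0}(A))\) is separable, since \(A = C(Z,\CK)\) is separable. Hence each \(D_n\) is separable, and so are \(D\) and \(D \rtimes_\alpha \ZZ\). By \cite{rordamsalgebra}*{Lemma~6.6} every \(D_n\) is type I, hence nuclear. Therefore \(D\) is nuclear, being an inductive limit of nuclear algebras, and \(D \rtimes_\alpha \ZZ\) is nuclear because \(\ZZ\) is amenable.

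\emph{The UCT.} Type I algebras lie in the bootstrap class, and the bootstrap class is closed under inductive limits and under crossed products by \(\ZZ\) (via the Pimsner--Voiculescu sequence). It follows that \(D \rtimes_\alpha \ZZ\) satisfies the UCT.

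\emph{Simplicity.} Here I would use Kishimoto's criterion: if \(D\) is \(\alpha\)-simple, meaning it has no non-trivial \(\alpha\)-invariant ideals, and \(\alpha^n\) is outer for every \(n \neq 0\), then the crossed product is simple. For \(\alpha\)-simplicity, the density condition on the points \(c_{j,i}\) should be the key input. The maps \(\varphi_j\) for \(j>0\) evaluate \(f\) at points that are dense in every finite product of spheres. Consequently, for any non-zero \(f \in A\), the ideal generated by the elements \(\alpha^k(\mu_{\infty,0}(f))\), \(k \in \ZZ\), contains full elements of the \(A_n\), and hence all of \(D\). Outerness would follow from \(A \cap \overline{\varphi}(\CM(A)) = 0\) \cite{rordamsalgebra}*{Proposition~5.2}, which shows that \(\alpha\) moves \(A_0\) to an algebra meeting it trivially, so no power of \(\alpha\) can be implemented by a unitary.

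\emph{An infinite projection.} Take the projection \(P = \mu_{\infty,0}(1_{\CM(A)}) \in B\), suitably cut down so that it lies in \(D\) (for instance \(\mu_{\infty,0}(e)\) for a rank-one constant projection \(e \in A\)). Using the isometry \(T\) with \(TT^* = F \neq 1\) from \cref{lemma:F_j}, together with the decomposition \(\varphi = T^*\psi T\) in which the summands with \(j \leq 0\) contribute copies of \(e\) itself, one obtains that \(e\) is Murray--von Neumann equivalent inside \(D \rtimes_\alpha \ZZ\) to a proper subprojection of itself. Indeed, conjugating by the implementing unitary \(u\) relates \(e\) to \(\alpha(e)\), and \(\alpha(e)\) dominates several orthogonal copies of \(e\).

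\emph{A non-zero finite projection.} This is the main obstacle. The candidate is a projection such as \(\mu_{\infty,0}(p_n)\) built from Bott projections. Showing that it is finite requires an obstruction, namely Chern classes or Euler classes of the Bott bundles on \(Z = \prod S^2\). The point is that the non-vanishing Euler class of \(p_{I}\) prevents \(p_I\) from being equivalent to a proper subprojection in the relevant type I stages \(D_n\), and this obstruction must survive both the inductive limit and the passage to the crossed product. I would follow R\o rdam's argument \cite{rordamsalgebra}*{Sections~6--7} here: the finiteness is detected via \(K\)-theory and characteristic classes, with the choice of \(Z\) infinite dimensional being essential.
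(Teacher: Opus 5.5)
The paper gives no proof of this statement; it is quoted from \cite{rordamsalgebra}*{Theorem~6.10}, so deferring to R\o rdam is legitimate. Your arguments for separability, nuclearity, the UCT and the infinite projection are fine. For the last one, the partial isometries \(\mu_{\infty,0}(S_j g)\), \(j \le 0\), with \(g\) your \(e\), lie in \(A_0 \subseteq D\) and place infinitely many mutually orthogonal copies of \(P\) under \(\alpha(P) = uPu^*\).

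The simplicity step, however, uses the wrong criterion. ``\(\alpha\)-simple and every \(\alpha^n\) outer'' suffices only when \(D\) itself is simple, and \(D\) is not. By \cref{prop:properties rordam}~\cref{prop:properties rordam:iv}, each \(\overline{\sum_{j \le k} A_j}\) is a closed two-sided ideal of \(D\). These ideals are proper (cf.\ the states in the proof of \cref{lemma:diagonal}), and \(\alpha\) shifts them. For \(\alpha\)-simple but non-simple \(D\), Kishimoto's theorem needs each \(\alpha^n\), \(n \neq 0\), to be \emph{properly} outer, i.e.\ at distance \(2\) from every inner automorphism on every non-zero \(\alpha^n\)-invariant ideal. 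This is what \cite{rordamsalgebra}*{Lemma~6.9} supplies, and it is the same input the paper feeds into \cref{prop:top-principal} in the proof of \cref{cor:cartan-model}.

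Your argument for outerness is also a non sequitur. The fact that \(\alpha(A_0) \cap A_0 = 0\) is compatible with \(\alpha\) being inner: the flip unitary in \(M_2\) conjugates \(\CC e_{11}\) onto \(\CC e_{22}\). Plain outerness does follow from the shifted ideals, since inner automorphisms fix every ideal, but plain outerness is not enough here.

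In the finite-projection step you put the Euler class in the wrong place. Each \(p_I\) is fibrewise of rank one, hence finite in \(C(Z,\CK)\) for trivial reasons, and finiteness in the stages \(D_n\) is not what is at stake. The danger is the new equivalences created by \(u\). The Euler class is used to show that a \emph{trivial} rank-one projection such as \(g\) is not subequivalent to suitable finite direct sums of the \(p_I\). Non-vanishing of the Euler class of such a sum is a combinatorial condition on the index sets, which the choices of \(\nu\) and of the \(I_j\) are designed to guarantee.

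What is missing is the reduction of finiteness of \(Q\) in \(D \rtimes_\alpha \ZZ\) to that statement. It runs as follows.
\begin{enumerate}
\item Since \(D \rtimes_\alpha \ZZ\) is simple and \(P\) is infinite, \(Q\) is finite if and only if \(P \not\precsim Q\). Indeed, if \(Q \sim Q_0 < Q\), then \(Q\) contains arbitrarily many mutually orthogonal copies of \(Q - Q_0 \neq 0\), and by simplicity finitely many of them dominate \(P\).
\item If \(P \precsim Q\), approximate the implementing partial isometry by some \(\sum_{|k| \le N} d_k u^k\) with \(d_k \in D\). Applying the canonical conditional expectation onto \(D\) gives \(P \precsim \bigoplus_{|k| \le N} \alpha^k(Q)\) in \(M_{2N+1}(D)\).
\item All of these projections lie in \(\mu_{\infty,N}(\CM(A))\), so the subequivalence descends to a single copy of \(\CM(A)\). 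There it says that a copy of \(g\) is subequivalent to finitely many of the line bundles \(p_J\) into which the \(\overline{\varphi}^i(p_1)\) decompose up to equivalence, and this is what the Euler class forbids.
\end{enumerate}
Without this reduction, ``the obstruction must survive the inductive limit and the crossed product'' simply restates what has to be proved.
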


We briefly delve into the ``moreover'' statement above, for it allows to find a unital \cstar{}algebra that is neither \emph{purely infinite}, nor \emph{stably finite}, see \cite{rordamsalgebra}*{Corollary~7.1}.
Recall that a projection $p \in A$ is \textit{infinite} if it is equivalent to a proper subprojection of itself, and \textit{finite} otherwise.
A unital \cstar{}algebra $A$ is called \emph{finite} (resp.\ \emph{infinite}) if $1_A$ is finite (resp.\ infinite).
There are multiple equivalent definitions of ``purely infinite'' \cstar{}algebras.
For simple, unital \cstar{}algebras, we can define it using \enquote{hereditary} subalgebras, see \cite{blackadar-book-2010}*{Proposition~V.2.3.3}. A subalgebra $B$ of a \cstar{}algebra $A$ is called \textit{hereditary} if $0\leq a\leq b$ for $a\in A$, $b\in B$ implies $a\in B$.

\begin{definition}[{see \cite{blackadar-book-2010}*{Proposition~V.2.3.3}}]\label[definition]{def:purely infinite}
  A simple, unital \cstar{}algebra $A\neq \CC$ is \emph{purely infinite} if and only if every nonzero hereditary subalgebra of $A$ contains an infinite projection.
\end{definition}

In \cite{blackadar-book-2010}*{Proposition~V.2.2.24} Blackadar proves that every nonzero projection in a purely infinite \cstar{}algebra is infinite.
On the other end of the spectrum, we say \(A\) is \emph{stably finite} if $M_n(A)$ is finite for all $n \in \NN$, see \cite{blackadar-book-2010}*{Definition~V.2.1.5}.
Observe that while every stably finite \cstar{}algebra is finite, the converse is not true in general, see \cite{blackadar-book-2010}*{Example~V.2.1.6}.

In our setting, we may define the projections $Q\coloneqq\mu_{\infty,0}(p_1)$, where $p_1$ is the projection defined in \eqref{eq:p_n}, and \mbox{$P\coloneqq\mu_{\infty,0}(g)$} with $g$ a constant one-dimensional projection in $A$.
In \cite{rordamsalgebra}*{Theorem~6.10} R\o rdam proves the following.
  
\begin{proposition}\label[proposition]{prop:projections rordam}
  $Q$ is a nonzero finite projection, and $P$ is an infinite projection in $D\rtimes_\alpha\ZZ$.
\end{proposition}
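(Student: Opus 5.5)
The proof splits into two independent parts: that $P$ is infinite follows from the multiplier-algebra construction, while that $Q$ is finite needs a topological (characteristic class) obstruction.

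\emph{$P$ is infinite.} I would use that $g$ is a constant one-dimensional projection, so $\varphi_j(g)=g$ for every $j\le 0$ by \eqref{eq:phi_j for j<0}. Since $g\in A$, \eqref{eq:varphi-sec-two} gives
\[
\overline{\varphi}(g)=\varphi(g)=\sum_{j}T^*S_j\varphi_j(g)S_j^*T\;\ge\; T^*S_0gS_0^*T+T^*S_{-1}gS_{-1}^*T .
\]
The two summands on the right are orthogonal, because $S_0^*S_{-1}=0$ and $TT^*=F$ commutes with each $S_jS_j^*$. The elements $v_j\coloneqq T^*S_jg$, for $j=0,-1$, lie in $A$ since $A$ is an ideal in $\CM(A)$. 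They satisfy $v_j^*v_j=gS_j^*FS_jg=gF_jg=g$, using $F_j=1$ for $j\le0$ as in \cref{lemma:F_j}, and $v_jv_j^*$ is the $j$-th summand above.

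Now apply $\mu_{\infty,1}$ and use $\mu_{\infty,1}(\overline{\varphi}(g))=\mu_{\infty,0}(g)=P$. This shows that $P$ dominates two orthogonal projections, each Murray--von Neumann equivalent to $\mu_{\infty,1}(g)=\alpha^{-1}(P)\in A_{-1}\subseteq D$. The implementing partial isometries $\mu_{\infty,1}(v_j)$ lie in $A_{-1}\subseteq D$.

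In $D\rtimes_\alpha\ZZ$ the canonical unitary $u$ gives $u^*Pu=\alpha^{-1}(P)$, or the analogous identity depending on convention, so $\alpha^{-1}(P)\sim P$. Hence $P\oplus P\precsim P$. Since $P\neq 0$, because $\mu_{\infty,0}$ is injective, $P$ is infinite, and in fact properly infinite.

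\emph{$Q$ is nonzero and finite.} That $Q\ne0$ is immediate from injectivity of $\mu_{\infty,0}$. For finiteness, I would argue by contradiction. Suppose $v\in D\rtimes_\alpha\ZZ$ satisfies $v^*v=Q$ and $vv^*=Q'\lneq Q$.

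First, approximate $v$ by a finite Fourier sum $\sum_{|k|\le N}d_ku^k$ with $d_k$ in some $D_n$. Standard perturbation then yields an honest subequivalence of $Q$ together with a nonzero subprojection of $Q$. Via the conditional expectation onto $D$ and the matrix trick, this becomes a relation among finitely many projections $\alpha^{k}(Q)$, $|k|\le N$, inside $M_m(D_n)$. Pulling back along $\mu_{\infty,n'}$ for $n'$ large, I would land in $M_m(\CM(A))$, in fact essentially in $M_m(A)=C(Z,\CK)$ after cutting down.

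There, the images of $p_1$ under iterates of $\overline{\varphi}$ are sums of the form $\tau(\cdots\otimes p_{I_j})$, which are tensor products of Bott projections over finite sets of coordinates. The relation would force a projection of the form $p_I$, or a sum of such, to be dominated by a proper subprojection of something that is itself built from $p_I$'s.

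Finally, I would derive a contradiction from characteristic classes. The Euler class of $p_I$, namely the product of the generators of $H^2$ of the coordinates in $I$, is nonzero in $H^{2|I|}(Z)$. Such a domination would instead force the vanishing of a top Chern or Euler class, since a complementary nonzero summand of $p_I$ must have a vanishing Euler class. The dense choice of the points $c_{j,i}$ is what prevents the constant pieces from absorbing this class.

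The main obstacle is this last step. One has to keep track of exactly which coordinates the $\overline{\varphi}^{k}(p_1)$ and $\alpha^k(Q)$ depend on after finitely many steps. One also has to verify that the approximation reduces to a clean inequality among projections in $C((S^2)^M,\CK)$ to which the Euler class obstruction applies. I would try first to isolate a single lemma of the form ``$p_I\oplus e\precsim p_I$ in $C(Z,\CK)$ forces $e=0$'', and then reduce everything to it.
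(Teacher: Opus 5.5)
The paper does not prove this statement; it quotes \cite{rordamsalgebra}*{Theorem~6.10}. Your argument that $P$ is (properly) infinite is correct and complete. The finiteness of $Q$, however, is not established by your sketch, for two concrete reasons.

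\emph{The reduction fails.} The Fourier/conditional-expectation trick only transports \emph{subequivalences}. Suppose $p,q\in D$ and $p\precsim q$ in $D\rtimes_\alpha\ZZ$. Approximating the implementing partial isometry by $x=\sum_{|k|\le N}d_ku^k$ gives $p\approx E(x^*qx)=\sum_k\alpha^{-k}(d_k^*qd_k)$, hence $p\precsim\bigoplus_{|k|\le N}\alpha^k(q)$ in $M_{2N+1}(D)$.
\begin{itemize}
\item Applied to $Q\oplus e\precsim Q$, this only gives $Q\oplus e\precsim\bigoplus_{|k|\le N}\alpha^k(Q)$. The extra summands mean this carries no finiteness information. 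Moreover $e=Q-vv^*$ need not lie in $D$.
\item At any level $n'\ge1$ the projections $\overline{\varphi}^m(p_1)$, $m\ge1$, have infinite rank and are not in $A$ (\cref{prop:properties rordam}). So you can cut down to finite pieces in $C(Z,\CK)$ only when the \emph{dominated} projection lies in $A$.
\item Your proposed lemma ``$p_I\oplus e\precsim p_I$ forces $e=0$'' is trivially true by rank, since every projection in $C(Z,\CK)$ is finite. It is not what is needed.
\end{itemize}
The missing idea is to turn finiteness into a non-subequivalence using simplicity. In a simple \cstar{}algebra an infinite projection is properly infinite, so it dominates every projection. Hence $Q$ infinite would give $P\precsim Q$. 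The trick then yields $P\precsim\bigoplus_{|k|\le N}\alpha^k(Q)$, with \emph{one} copy of each $\alpha^k(Q)$. Now pull back to a level $n\ge N$. There $\overline{\varphi}^n(g)$ dominates a constant rank-one projection of $A$ (e.g.\ $\overline{\varphi}(g)\ge S_0gS_0^*$). You obtain a trivial line bundle subequivalent to a finite sub-sum of $\bigoplus_{m=0}^{2n}\overline{\varphi}^m(p_1)$.

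\emph{The obstruction is misidentified.} Each $p_I$ has rank one: it is a tensor product of line bundles, as observed before \cref{prop:V_j isometry}. So its Euler class is $c_1(p_I)=\sum_{i\in I}e_i\in H^2(Z)$, not $\prod_{i\in I}e_i$. What you need is the following.
\begin{itemize}
\item A finite sum $\bigoplus_k p_{J_k}$ has top Chern class $\prod_k\bigl(\sum_{i\in J_k}e_i\bigr)$. This is non-zero exactly when $\{J_k\}$ satisfies Hall's marriage condition, and then the sum contains no trivial line.
\item Hall's condition holds for the pieces of $\bigoplus_{m\le M}\overline{\varphi}^m(p_1)$. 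The map $\overline{\varphi}$ replaces a piece $p_J$ by pieces equivalent to $p_{\nu(j,J)}$ for $j\le0$ and $p_{\nu(j,J\cup\{1,\dots,j\})}$ for $j\ge1$. These live on the disjoint sets $\nu(\{j\}\times\NN)$, so Hall's condition is preserved. Also $1\notin\nu(\ZZ\times\NN)$.
\end{itemize}
This is where $|I_j|=j$ enters: it compensates for the coordinates frozen at the points $c_{j,i}$. The density of the $c_{j,i}$ plays no role here; it is used for simplicity. Finally, Hall already fails for $p_1\oplus p_1$, since $H\oplus H$ over $S^2$ contains a trivial line. So $Q\oplus Q$ is infinite, and any reduction must produce single copies of each $\alpha^k(Q)$. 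The route through $P\precsim Q$ does this; yours does not.
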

  
\begin{corollary}[{see \cite{rordamsalgebra}*{Corollary~7.1}}]\label[corollary]{cor:corner rordam}
  The corner $P(D\rtimes_\alpha\ZZ)P$ is a nuclear, unital, separable, simple \cstar{}algebra satisfying the UCT. It is infinite, but contains a nonzero finite projection.
\end{corollary}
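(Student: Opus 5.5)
The plan is to derive everything from \cref{theorem:rordams algebra} and \cref{prop:projections rordam}, using standard permanence facts about full corners. Write \(C \coloneqq D\rtimes_\alpha\ZZ\).

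\emph{Structural properties of the corner.} Since \(P \in D \subseteq C\) is a projection, \(PCP\) is a unital \cstar{}algebra with unit \(P\), and it is a hereditary subalgebra of \(C\).
\begin{enumerate}
\item Nuclearity passes to hereditary subalgebras, and separability is inherited by any subalgebra.
\item \(C\) is simple and \(P \neq 0\), so the ideal generated by \(P\) is dense in \(C\); that is, \(P\) is full.
\item By Brown's stable isomorphism theorem, \(PCP \otimes \CK \cong C \otimes \CK\), since both algebras are separable.
\item Simplicity is preserved: every ideal of \(PCP\) has the form \(PIP\) for an ideal \(I \subseteq C\).
\item The UCT is invariant under \(KK\)-equivalence, and in particular under stable isomorphism. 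Hence \(PCP\) satisfies the UCT because \(C\) does.
\end{enumerate}

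\emph{Infiniteness.} By \cref{prop:projections rordam}, \(P\) is infinite in \(C\). So there is a partial isometry \(v \in C\) with \(v^*v = P\) and \(vv^* < P\). Then \(v = vv^*\,v\,v^*v = PvP \in PCP\), so the equivalence takes place inside the corner. Thus the unit of \(PCP\) is infinite, and \(PCP\) is infinite.

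\emph{A nonzero finite projection in the corner.} This is the main obstacle: \(Q\) need not lie under \(P\), so we must transport it.
\begin{enumerate}
\item Since \(C\) is simple and \(P\) is infinite, Cuntz's argument shows that \(Q\) is Murray--von Neumann subequivalent to \(P\), at least after passing to \(M_n(C)\). In fact, \(P\) is properly infinite: in a simple algebra an infinite projection \(P\) has \(P - vv^*\) full, so \(P\) contains arbitrarily many mutually orthogonal copies of full projections, which yields proper infiniteness.
\item Pick \(w\) with \(w^*w = Q\) and \(ww^* = Q' \leq P\). Then \(Q' \in PCP\) and \(Q' \neq 0\).
\item Finiteness is invariant under Murray--von Neumann equivalence, and it is intrinsic to the hereditary subalgebra \(Q'CQ' = Q'(PCP)Q'\). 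Hence \(Q'\) is finite in \(PCP\), because \(Q\) is finite in \(C\).
\end{enumerate}
If the subequivalence \(Q \precsim P\) is only available in matrices, I would instead use the more direct fact from \cite{rordamsalgebra}*{Corollary~7.1}, or the proper infiniteness of \(P\), to absorb \(n\) copies of \(Q\) under \(P\).
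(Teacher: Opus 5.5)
Your argument is correct. The paper gives no proof of this statement: it is quoted from \cite{rordamsalgebra}*{Corollary~7.1}. What you have written is a self-contained reduction of it to \cref{theorem:rordams algebra} and \cref{prop:projections rordam}. It uses standard facts about full corners of simple algebras: Brown's theorem for the UCT, heredity of nuclearity and simplicity, and transport of the finite projection under the infinite one. This is the usual way such a corollary is obtained.

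The one step you should make explicit is the proper infiniteness of $P$. Set $e \coloneqq P - vv^*$ and $e_k \coloneqq v^k e v^{*k}$. The $e_k$ are mutually orthogonal subprojections of $P$, each equivalent to $e$. Since $e$ is full, $P \precsim e \otimes 1_n$ in $M_n(D\rtimes_\alpha\ZZ)$ for some $n$. Hence $P \oplus P \precsim \sum_{k<2n} e_k \leq P$.

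Then fullness of $P$ gives $Q \precsim P \otimes 1_m$ for some $m$, and proper infiniteness gives $P \otimes 1_m \precsim P$. So $Q \precsim P$ already holds in $D\rtimes_\alpha\ZZ$, and your closing hedge is unnecessary.

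Drop the fallback to \cite{rordamsalgebra}*{Corollary~7.1}: that is the statement being proved, so invoking it is circular. Also, what proper infiniteness absorbs under $P$ is $m$ copies of $P$, not of $Q$.
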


This implies that the corner $P(D\rtimes_\alpha\ZZ)P$ is neither stably finite nor purely infinite. Since the corner is a nuclear, unital, separable, simple \cstar{}algebra satisfying the UCT, it provides a counterexample to \Cref{question: intro 2}.
The main goal of the paper is the construction of a groupoid model for \(P (D \rtimes_{\alpha} \ZZ) P\): this is, in fact, the groupoid \(\CH\) appearing in \cref{thm-intro}.
  
To end this section, we state some technical properties that are needed in the next sections. All of these are taken out from \cite{rordamsalgebra}*{Sections 5 and 6}.
\begin{proposition}\label[proposition]{prop:properties rordam}
  With notations as above, the following properties hold.
  \begin{enumerate}
    \item \label{prop:properties rordam:i}   $\overline{\varphi} \colon \CM(A) \to \CM(A)$ is injective.
    \item \label{prop:properties rordam:ii}  $\mu_{\infty,n} \colon \CM(A) \to B$ is injective for all $n$.
    \item \label{prop:properties rordam:iii} $\overline{\varphi}\big(\CM(A)\big) \cap A=\{0\}$.
    \item \label{prop:properties rordam:iv}  $A_mA_n\subseteq A_{\min\{m,n\}}$.
    \item \label{prop:properties rordam:v}   $D_n=A_{-n}+A_{-n+1}+\dots+A_n$, and each element $d\in D_n$ can be written as a sum $d=d_{-n}+d_{-n+1}+\dots+d_n$ with $d_i\in A_i$ in a unique way.
  \end{enumerate}
\end{proposition}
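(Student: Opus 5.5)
Most of the statement is quoted from \cite{rordamsalgebra}*{Sections 5 and 6}; the plan is to recall the arguments there and check that they still work for the concrete choices made above (the isometries \(S_j = s_j\) and the maps \(\varphi_j\)). I would prove the five items in order. Item~\ref{prop:properties rordam:iii} comes first, since \ref{prop:properties rordam:i} and \ref{prop:properties rordam:ii} follow from it.

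For \ref{prop:properties rordam:iii}, take \(g \in \CM(A)\) with \(\overline{\varphi}(g) \in A\). Compressing by \(T\) and \(S_j\), the block \(S_j^* T\overline{\varphi}(g)T^*S_j = \overline{\varphi_j}(g)\) lies in \(A\) for every \(j\), and its norm tends to \(0\) as \(j \to \pm\infty\). This uses that the ranges \(S_jS_j^*\) are spread out along \(\NN\), so an element of \(A = C(Z,\CK)\) has blocks of vanishing norm. For \(j \ge 1\), formula \eqref{eq:phi_j for j>0} gives \(\norm{\overline{\varphi_j}(g)} \ge \sup_{x}\norm{g(c_{j,1},\dots,c_{j,j},x_{\nu(j,j+1)},\dots)}\). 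By the density assumption on the points \(c_{j,i}\), this supremum is bounded below by \(\norm{g}\) along a subsequence of \(j\). Hence \(\norm{g} = 0\). This step is the technical heart, and I expect it to be the main obstacle: one has to control strict limits and the tensor factor \(p_{I_j}\), which has norm one at every point. Injectivity \ref{prop:properties rordam:i} is immediate, because \(\overline{\varphi}\) is unital and hence nonzero on every nonzero \(g\) (or take \(j \le 0\), where \(\varphi_j\) is a coordinate substitution by the injective map \(\nu(j,\cdot)\) and so is injective). Then \ref{prop:properties rordam:ii} follows from \ref{prop:properties rordam:i}: the norm in the inductive limit is \(\lim_m \norm{\mu_{m,n}(x)} = \norm{x}\), since each connecting map is isometric.

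For \ref{prop:properties rordam:iv}, reduce by applying \(\alpha^{-\min\{m,n\}}\) to the case \(A_0A_k\) with \(k \ge 0\). Since \(A_k = \alpha^k(A_0) = \mu_{\infty,0}(\overline{\varphi}^{k}(A))\) lies in \(\mu_{\infty,0}(\CM(A))\), the product satisfies \(A_0A_k \subseteq \mu_{\infty,0}(A\cdot\CM(A)) \subseteq \mu_{\infty,0}(A) = A_0\), because \(A\) is an ideal in \(\CM(A)\). The case \(k \le 0\) follows by taking adjoints.

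For \ref{prop:properties rordam:v}, item \ref{prop:properties rordam:iv} shows that \(A_{-n}+\dots+A_n\) is a \Star{}algebra. To see it is closed, apply \(\alpha^{n}\) so that everything sits in \(\mu_{\infty,0}(\CM(A))\) as \(A + \overline{\varphi}(A)+\dots+\overline{\varphi}^{2n}(A)\). This is a finite sum of \cstar{}subalgebras in which each term is an ideal of the \cstar{}algebra generated by the later ones, and such iterated ideal-plus-subalgebra sums are closed. Hence the sum equals \(D_n\). For uniqueness, suppose \(\sum_i d_i = 0\). Let \(i_0\) be the smallest index with \(d_{i_0} \neq 0\). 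Then \(d_{i_0} \in A_{i_0}\cap \alpha^{i_0}\mu_{\infty,0}(\overline{\varphi}(\CM(A)))\), which is zero by \ref{prop:properties rordam:iii} transported through the injective maps \(\mu_{\infty,0}\) and \(\alpha^{i_0}\). Induction on the number of nonzero terms finishes the argument.
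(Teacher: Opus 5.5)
Your reconstruction is essentially correct. The paper gives no proof of its own here; it imports all five items from R\o rdam's Sections 5 and 6, so your proposal fills that in. Your arguments for (ii), (iv) and (v) are right. In (v), phrase the closedness step as follows: $\overline{\varphi}^i(A)$ is an ideal of $C^*(\overline{\varphi}^i(A),\dots,\overline{\varphi}^{2n}(A))$, i.e.\ it absorbs products with the later terms. It is not contained in the algebra generated by the later terms alone, so it is not literally an ideal of that algebra. There is, however, one invalid step in (i), and (iii) can be done much more cheaply.

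\textbf{Item (i).} The claim that $\overline{\varphi}$, being unital, is nonzero on every nonzero $g$ does not hold. Unital $^*$-homomorphisms can have nonzero kernel; think of $\CM(A)\to\CM(A)/A$. Nor does (iii), as a statement about the image, imply (i), since $0\in A$ regardless. What does give (i) is that your argument for (iii) actually proves the stronger implication $\overline{\varphi}(g)\in A\Rightarrow g=0$, which you can apply to $\overline{\varphi}(g)=0$. Equivalently, your parenthetical alternative works once you compress: $\overline{\varphi}(g)=0$ forces $\overline{\varphi_j}(g)=S_j^*T\overline{\varphi}(g)T^*S_j=0$ for all $j$. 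Drop the unitality sentence.

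\textbf{Item (iii).} Your $j\to+\infty$ route can be completed, but it needs a detail you did not state. Since $g\in\CM(A)$ is only strictly continuous, $y\mapsto\norm{g(y)}$ is merely lower semicontinuous. You must choose a basic open set $W_1\times\cdots\times W_N\times\prod_{i>N}S^2$ on which $\norm{g(y)}>\norm{g}-\varepsilon$. Then density of $\{(c_{j,1},\dots,c_{j,N})\}_{j\ge N}$ in $(S^2)^N$, which has no isolated points, gives infinitely many good $j$.

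All of this is avoidable. For $j\le 0$ one has $\overline{\varphi_j}(g)(x)=g(x_{\nu(j,1)},x_{\nu(j,2)},\dots)$. Since $\nu(j,\cdot)$ is injective, the map $x\mapsto(x_{\nu(j,t)})_{t\in\NN}$ is onto $Z$, so $\norm{\overline{\varphi_j}(g)}=\norm{g}$ for every $j\le 0$. Letting $j\to-\infty$ in $\norm{\overline{\varphi_j}(g)}\to 0$ gives $g=0$ at once. This uses neither the points $c_{j,i}$, nor the Bott projections, nor the factor $p_{I_j}$, and it yields (i) in the same stroke.
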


\section{A Cartan subalgebra of \texorpdfstring{\(D\)}{D}}\label[section]{sec:Cartan}

In this section we prove that the \cstar{}algebra $D$ constructed in \Cref{subsec:construction rordam} has a Cartan subalgebra $\widetilde{D}$.
We will later use this \(\widetilde{D}\) in order to construct a groupoid model for R\o rdam's algebra $D\rtimes_\alpha\ZZ$.
To construct the pair $\widetilde{D} \subseteq D$ we follow exactly the same construction as for $D$, but starting from a Cartan subalgebra $\widetilde{A} \subseteq A$.
Indeed, the \cstar{}algebra $A=C(Z,\CK)$ has a groupoid model, for it has a canonical Cartan subalgebra $\widetilde{A} \coloneqq C(Z,c_0(\NN))\subseteq C(Z,\CK) = A$.

\begin{convention}
    We will use the same notations as in \cref{subsec:construction rordam} throughout the rest of the paper.
\end{convention}

\subsection{Construction of the connecting map}\label[subsection]{subsec:construction phi}

Before constructing the subalgebra $\widetilde{D} \subseteq D$ we give an explicit construction for the map $\overline{\varphi} \colon \CM(A) \to \CM(A)$ by finding a description of the isometry $T \in \CM(A)$ that was used in the definition of $\varphi \colon A \to \CM(A)$ in \eqref{eq:varphi-sec-two}.
For this, we have to construct an isometry $T\in \CM(A)$ with $TT^*=F$, see \cref{lemma:F_j}.
Recall that the projection $p\in C(S^2,M_2(\CC))$ used in the construction of the maps $\varphi_j$ for $j>0$ in \eqref{eq:phi_j for j>0} is chosen to be the Bott projection $p\in C(S^2,M_2(\CC))$, defined by
\begin{equation*}
  p(z)\coloneqq \frac{1}{1+|z|^2}\begin{pmatrix}
    \abs{z}^2&z\\
    \overline{z}&1
  \end{pmatrix} \;\; \text{ for } z\in \CC.
\end{equation*}
(Here we are identifying \(S^2\) with \(\CC \sqcup \{\infty\}\), \emph{i.e.}\ the one point compactification of \(\CC\), and noting that if \(\abs{z} \to \infty\) and we define \(p(\infty) \coloneqq e_{11}\) then the resulting \(p\) really is continuous.)
This gives 
\begin{equation} \label{eq:p-rank-one}
  p(z)\begin{pmatrix}
    0\\
    1
  \end{pmatrix}=\frac{1}{1+|z|^2}\begin{pmatrix}
  z\\
  1
  \end{pmatrix}\hspace{5mm}\text{ and }\hspace{5mm}
  p(z)\begin{pmatrix}
    1\\
    0
  \end{pmatrix}=\overline{z}\cdot p(z)\begin{pmatrix}
  0\\
  1
  \end{pmatrix},
\end{equation}
which implies that \({\rm im} (p(z)) = \{(\lambda z, \lambda) : \lambda \in \CC\}\) for all \(z \in \CC \subseteq S^2\).
Furthermore, we may choose the isomorphism $\tau \colon \CK\otimes\CK\rightarrow \CK$ by fixing a bijection \mbox{$\phi \colon \NN\times \NN\rightarrow\NN$} and defining $\tau$ on the elementary tensors by
\begin{equation}\label{eq:tau}
  \tau(e_{ij}\otimes e_{kl})\coloneqq e_{\phi(i,k)\phi(j,l)}.
\end{equation}
Equivalently, we may induce a unitary \(U \colon \ell^2(\NN) \tensor \ell^2(\NN) \to \ell^2(\NN)\) by \(\delta_{p} \tensor \delta_q \mapsto \delta_{\phi(p,q)}\), and then let \(\tau(k \tensor k') \coloneqq U^* (k \tensor k') U\).

\begin{convention} \label[convention]{conv:infty-in-s-two}
    From now given \(x \in S^2\) we may see it as an element \(x \in \CC\), with the adequate understanding that \(x\) may be \(\infty\), in which case \(p(x) = e_{11} \in M_2\).
\end{convention}

For $x\in Z$ and $j>0$, we have $F_j(x) = \overline{\tau}(1\otimes p_{I_j}(x))$ as defined in (the proof of) \Cref{lemma:F_j}. Suppose $I_j=\{n_1,n_2,\dots,n_j\}$, now
\begin{align*}
  p_{I_j}(x)&=p_{n_1}(x)\otimes\dots\otimes p_{n_j}(x)\\
  &=\frac{1}{1+|x_{n_1}|^2}\begin{pmatrix}
    |x_{n_1}|^2&x_{n_1}\\ \overline{x}_{n_1}&1
  \end{pmatrix}\otimes \cdots \otimes \frac{1}{1+|x_{n_j}|^2}\begin{pmatrix}
    |x_{n_j}|^2&x_{n_j}\\ \overline{x}_{n_j}&1
  \end{pmatrix}.
\end{align*}
If \(\{e_{ij}\}_{i,j \in \{1,2\}} \subseteq M_2\) are the usual matrix units, the above expression is the same as
\begin{align*}
  p_{I_j}(x)=\bigotimes_{i=1}^j\frac{1}{1+|x_{n_i}|^2}\big(|x_{n_i}|^2e&_{11}+x_{n_i}e_{12}+\overline{x}_{n_i}e_{21}+e_{22}\big).
\end{align*}
Recall we have tacitly chosen particular embeddings \(M_2 \subseteq M_3 \subseteq \cdots \subseteq \CK\).
Under the usual identification \(M_2 \tensor M_2 \cong M_4\) we have that \(e_{i,j} \tensor e_{p,q} = e_{2(i-1)+p,2(j-1)+q}\), each side understood in their own matrix algebra (and hence the right hand side being a \(4\) by \(4\) matrix).
In our setting, however, we need to take \(j\) tensor products of elements in \(M_2\), see \eqref{eq:p_I}.
In the following, by convention, we say that empty products equal \(1\).
We are grateful to Spyridon Petrakos for finding a mistake in a previous version of the text, which the following solves.

\begin{lemma}
    We have that
    \begin{equation} \label{eq:p-I-in deltas}
        p_{I_j}(x) = \frac{1}{\prod_{i = 1}^j 1 + \abs{x_{n_i}}^2} \sum_{\fa, \fb \in \{1,2\}^j} x_{\fa} \overline{x}_\fb \, e_{\varsigma(\fa) \varsigma(\fb)},
    \end{equation}
    where \(x_\fa \coloneqq \prod_{i : \fa_i = 1} x_{n_i}\), similarly for \(\overline{x}_\fb \coloneqq \prod_{i : \fb_i = 1} \overline{x}_{n_i}\), and \(\varsigma(\fa) \coloneqq 1+ \sum_{i = 1}^j (\fa_i - 1) 2^{j-i}\).
\end{lemma}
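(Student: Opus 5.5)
The plan is to expand each tensor factor in matrix units and then identify the resulting tensor products of matrix units with matrix units of \(M_{2^j}\) under the iterated identification \(M_2^{\tensor j} \cong M_{2^j}\).

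First, for a single factor, write
\[
  p(x_{n_i}) = \frac{1}{1+\abs{x_{n_i}}^2} \sum_{a,b \in \{1,2\}} x_{n_i}^{[a=1]}\, \overline{x}_{n_i}^{[b=1]}\, e_{ab}.
\]
Here \([a=1]\) is \(1\) if \(a=1\) and \(0\) otherwise. One checks this against the four entries \(\abs{x}^2, x, \overline{x}, 1\): the \((a,b)=(1,1)\) entry is \(x\overline{x}\), the \((1,2)\) entry is \(x\), the \((2,1)\) entry is \(\overline{x}\), and the \((2,2)\) entry is \(1\). All of these agree with the Bott projection.

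Second, take the \(j\)-fold tensor product and expand multilinearly. The scalars factor out to give the prefactor \(\prod_i (1+\abs{x_{n_i}}^2)^{-1}\). The sum runs over tuples \(\fa = (\fa_1,\dots,\fa_j)\) and \(\fb\) in \(\{1,2\}^j\), and each summand is
\[
  \Big(\prod_i x_{n_i}^{[\fa_i=1]}\,\overline{x}_{n_i}^{[\fb_i=1]}\Big)\, e_{\fa_1\fb_1}\tensor\cdots\tensor e_{\fa_j\fb_j}.
\]
The scalar product is exactly \(x_\fa \overline{x}_\fb\) as defined in the statement.

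Third, the remaining and only nontrivial point is to show that \(e_{\fa_1\fb_1}\tensor\cdots\tensor e_{\fa_j\fb_j} = e_{\varsigma(\fa)\varsigma(\fb)}\) under the iterated identification. I would prove this by induction on \(j\), using the given rule \(e_{i,j}\tensor e_{p,q} = e_{2(i-1)+p,\,2(j-1)+q}\), suitably generalized to \(M_{2^{k}}\tensor M_2 \cong M_{2^{k+1}}\) as \(e_{s,t}\tensor e_{p,q} = e_{2(s-1)+p,\,2(t-1)+q}\). This generalization is the natural left-to-right (lexicographic) convention. For \(j=1\) we have \(\varsigma(a) = a\). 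For the inductive step, let \(\fa' = (\fa_1,\dots,\fa_{j-1})\). Then
\[
  2(\varsigma(\fa')-1)+\fa_j = 2\sum_{i<j}(\fa_i-1)2^{j-1-i} + \fa_j = 1+\sum_{i\le j}(\fa_i-1)2^{j-i} = \varsigma(\fa),
\]
and the same computation applies to \(\fb\).

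The main obstacle is simply being careful that the convention for \(M_{2^k}\tensor M_2\) is the one compatible with the stated \(M_2\tensor M_2\) rule and with the chosen embeddings into \(\CK\). One must also note the case \(x_{n_i}=\infty\). There the factor is \(e_{11}\), and the formula is read via \cref{conv:infty-in-s-two} as the limit, which is consistent since only the \(\fa_i=\fb_i=1\) term survives after normalization. Combining the three steps gives \eqref{eq:p-I-in deltas}.
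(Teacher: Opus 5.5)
Your proposal is correct and follows essentially the same route as the paper. Both arguments are an induction on \(j\) driven by the rule \(e_{s,t}\tensor e_{p,q} = e_{2(s-1)+p,\,2(t-1)+q}\) and the recursion \(\varsigma_j(\fa) = 2(\varsigma_{j-1}(\fa|_{1,\dots,j-1})-1)+\fa_j\); the only difference is that you expand multilinearly first and induct only on the identification of matrix units, whereas the paper inducts on the whole unnormalized formula, and your explicit use of the rule for \(M_{2^{k}}\tensor M_2\cong M_{2^{k+1}}\) (not just \(M_2\tensor M_2\)) makes precise a step the paper leaves tacit.
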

\begin{proof}
    First note that the normalization item in \eqref{eq:p-I-in deltas} is just a number, and it is the product of the analogue numbers in the definition of \(p_{I_j}(x)\) above. So we will ignore these quantities for the sake of readability, and let \(q_{I_j}(x)\) be the same as in \eqref{eq:p-I-in deltas} but without these.
    We proceed by induction on \(j\).
    If \(j = 1\) then we only have \(1\) tensor product, and the formula is indeed correct.
    In general, suppose the formula \eqref{eq:p-I-in deltas} holds up for \(j - 1\) and \(1\); that is, 
    \[
        q_{I_{j-1}}(x) = \sum_{\fa, \fb \in \{1,2\}^{j-1}} x_{\fa} \overline{x}_\fb \, e_{\varsigma_{j-1}(\fa) \varsigma_{j-1}(\fb)} \; \text{ and } \; q_{n_{j}}(x) = \sum_{\fc, \fd \in \{1,2\}} x_{\fc} \overline{x}_\fd \, e_{\varsigma_{1}(\fc) \varsigma_{1}(\fd)},
    \]
    where \(\varsigma_{j-1}\) and \(\varsigma_1\) are the same as in the statement but summing up to \(j-1\) and \(1\) respectively.
    Hence notice that \(\varsigma_1(\fc) = \fc_1\), and similarly for \(\fd\).
    Consider the tensor product \(q_{I_{j-1}}(x) \tensor q_{n_j}(x)\).
    The claim is then a tedious computation:
    \begin{align*}
        q_{I_{j-1}}(x) \tensor q_{n_j}(x) & = \left(\sum_{\fa, \fb \in \{1,2\}^{j-1}} x_{\fa} \overline{x}_\fb \, e_{\varsigma_{j-1}(\fa) \varsigma_{j-1}(\fb)}\right) \tensor \left(\sum_{\fc, \fd \in \{1,2\}} x_\fc \overline{x}_\fd e_{\varsigma_{1}(\fc) \varsigma_1(\fd)}\right) \allowdisplaybreaks \\
        & = \sum_{\fa, \fb \in \{1,2\}^{j-1}} \sum_{\fc, \fd \in \{1,2\}} x_\fa x_\fc \cdot \overline{x}_{\fb} \overline{x}_\fd \cdot \left(e_{\varsigma_{j-1}(\fa) \varsigma_{j-1}(\fb)} \tensor e_{\varsigma_{1}(\fc) \varsigma_{1}(\fd)}\right) \allowdisplaybreaks \\
        & = \sum_{\fa, \fb \in \{1,2\}^j} x_\fa \overline{x}_\fb \cdot e_{2(\varsigma_{j-1}(\fa|_{1, \dots, j-1}) - 1) + \varsigma_{1}(\fa_j), 2(\varsigma_{j-1}(\fb|_{1, \dots, j-1}) - 1) + \varsigma_{1}(\fb_j)} \allowdisplaybreaks \\
        & = \sum_{\fa, \fb \in \{1,2\}^j} x_\fa \overline{x}_\fb e_{\varsigma_j(\fa) \varsigma_j(\fb)},
    \end{align*}
    where the previous-to-last equality follows from the fact that \(e_{i,j} \tensor e_{p,q} = e_{2(i-1)+p,2(j-1)+q}\) under the isomorphism \(M_2 \tensor M_2 \cong M_4\) alluded to before.
    Likewise, in the last equality we used that
    \[
        \varsigma_j(\fa) = 1 + \sum_{i = 1}^j \left(\fa_i - 1\right) 2^{j-i} = 2\left(1 + \sum_{i = 1}^{j-1} (\fa_i - 1) 2^{j-1 - i} - 1\right) + \fa_j= 2(\varsigma_{j-1}(\fa|_{1, \dots, j-1}) - 1) + \varsigma_{1}(\fa_j),
    \]
    and similarly for \(\varsigma_j(\fb)\).
\end{proof}

Putting \(c(x_{I_j}) \coloneqq 1/\prod_{i = 1}^j 1 + \abs{x_{n_i}}^2\), it follows from \eqref{eq:tau} and \eqref{eq:p-I-in deltas} that $F_j(x) $ is given by
\begin{align}\label{eq:F_j(x)}
  F_j(x) \stackrel{{\rm (def)}}{=} \sum_{l \in \NN} \tau(e_{ll} \otimes p_{I_j}(x)) \stackrel{{\rm \eqref{eq:tau} + \eqref{eq:p-I-in deltas}}}{=} c(x_{I_j}) \sum_{l \in \NN} \sum_{\fa, \fb \in \{1,2\}^j} x_\fa \overline{x}_\fb \, e_{\phi(l, \varsigma(\fa)) \phi(l,\varsigma(\fb))}.
\end{align}
Note that, should \(x \in Z = \prod_{j = 1}^\infty S^2 \cong \prod_{j = 1}^\infty (\CC \sqcup \{\infty\}) \) have an ``\(\infty\)'' coordinate then technically the expression \(F_j(x)\) above would have to be appropriately substituted  via \cref{conv:infty-in-s-two}.

Coming back to \(p_{I_j}(x)\) in \eqref{eq:p_I} and \eqref{eq:p-I-in deltas}, again notice that it is (by definition) a tensor product of \(j\) rank one projections. By \eqref{eq:p-rank-one} these are, moreover, the projections onto the normalized vectors
\[
   w_{n_i}(x) \coloneqq \frac{1}{\sqrt{1 + \abs{x_{n_i}}^2}} (x_{n_i}, 1) \in \CC^2.
\]
Hence the projection \(p_{I_j}(x)\) itself is also rank one, and it projects onto the vector \(w_{I_j}(x) \coloneqq w_{n_1}(x) \tensor \cdots \tensor w_{n_j}(x) \in \CC^{2^j}\).
Thus \(1 \tensor p_{I_j}(x) = \sum_{l \in \NN} e_{ll} \tensor p_{I_j}(x)\) projects onto the linear span of \(\{\delta_l \tensor w_{I_j}(x)\}_{l \in \NN}\).
Similarly to \eqref{eq:p-I-in deltas}, and using the same language, notice we may express \(w_{I_j}(x)\) more succinctly as
\begin{equation} \label{eq:w-I-j-vector}
    w_{I_j}\left(x\right) = c(x_{I_j})^{1/2} \sum_{\fa \in \{1,2\}^j} x_\fa \delta_{\varsigma(\fa)}.
\end{equation}
The following is now essentially obvious, recall \eqref{eq:tau}.

\begin{proposition}\label[proposition]{prop:V_j isometry}
  Let $F_j \colon Z\rightarrow \CB(\ell^2(\NN))$ be as in \eqref{eq:F_j(x)}. An orthonormal basis for $\operatorname{range}\big(F_j(x)\big)$ is
  \begin{equation*}
    \left\{v_{(l,j)}(x) \coloneqq c(x_{I_j})^{1/2} \sum_{\fa \in \{1,2\}^j} x_\fa \delta_{\phi(l,\varsigma(\fa))}\right\}_{l \in \NN}.
  \end{equation*}
  Thus, the map \(V_j \colon Z \to \CB(\ell^2(\NN))\) given by \(V_j(x)\delta_l \coloneqq v_{(l,j)}(x)\) satisfies that \(V_j(x)\) is an isometry with \(V_j(x) V_j(x)^* = F_j(x)\) for all $x\in Z$. 
\end{proposition}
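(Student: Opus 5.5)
The plan is to identify the range of \(F_j(x)\) explicitly through the unitary \(U\) implementing \(\tau\), and then read off an orthonormal basis. By the discussion preceding \eqref{eq:w-I-j-vector}, \(p_{I_j}(x)\) is the rank one projection onto the unit vector \(w_{I_j}(x)\). Hence \(1 \tensor p_{I_j}(x)\) is the projection onto \(\overline{\rm span}\{\delta_l \tensor w_{I_j}(x)\}_{l \in \NN}\), and these vectors are orthonormal because \(\innerprod{\delta_l \tensor w}{\delta_{l'} \tensor w} = \innerprod{\delta_l}{\delta_{l'}}\norm{w}^2\).

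Since \(F_j(x) = \overline{\tau}(1 \tensor p_{I_j}(x))\) and \(\overline{\tau}\) is spatially implemented, \(F_j(x)\) is the conjugate of \(1 \tensor p_{I_j}(x)\) by \(U\), where \(U(\delta_p \tensor \delta_q) = \delta_{\phi(p,q)}\). (I fix the convention so that conjugation sends \(e_{ij}\tensor e_{kl}\) to \(e_{\phi(i,k)\phi(j,l)}\), consistent with \eqref{eq:tau}.) So the range of \(F_j(x)\) is \(U\) applied to the range of \(1 \tensor p_{I_j}(x)\). Because \(U\) is unitary, it carries the orthonormal basis \(\{\delta_l \tensor w_{I_j}(x)\}_l\) to an orthonormal basis. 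Expanding with \eqref{eq:w-I-j-vector} gives
\[
U(\delta_l \tensor w_{I_j}(x)) = c(x_{I_j})^{1/2}\sum_{\fa} x_\fa \,\delta_{\phi(l,\varsigma(\fa))} = v_{(l,j)}(x),
\]
which is the claimed basis.

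As a sanity check independent of \(U\), I will verify orthonormality directly from injectivity of \(\phi\). The vectors \(v_{(l,j)}(x)\) and \(v_{(l',j)}(x)\) have disjoint supports when \(l \neq l'\). Also \(\varsigma\) is a bijection \(\{1,2\}^j \to \{1,\dots,2^j\}\), so
\[
\norm{v_{(l,j)}(x)}^2 = c(x_{I_j})\sum_\fa \abs{x_\fa}^2 = c(x_{I_j})\prod_i (1+\abs{x_{n_i}}^2) = 1.
\]
I will also check against \eqref{eq:F_j(x)} that \(\sum_l v_{(l,j)}(x)v_{(l,j)}(x)^* = F_j(x)\). This requires a conjugate convention: \(x_\fa\) versus \(\overline{x}_\fa\) appears in the ket, and \eqref{eq:F_j(x)} has \(x_\fa\overline{x}_\fb e_{\varsigma(\fa)\varsigma(\fb)}\), which matches \(v v^*\) with \(v = \sum x_\fa \delta\). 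Coordinates equal to \(\infty\) are handled by \cref{conv:infty-in-s-two}: there the factor \(w_{n_i}\) becomes \(\delta_1\), and the same argument goes through with the obvious substitution.

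Finally, \(V_j(x)\) sends the orthonormal basis \(\{\delta_l\}\) to the orthonormal family \(\{v_{(l,j)}(x)\}\), so it is an isometry. Then \(V_j(x)V_j(x)^*\) is the projection onto the closed span of that family, which is the range of \(F_j(x)\); hence \(V_j(x)V_j(x)^* = F_j(x)\).

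The only delicate point is bookkeeping. One issue is the direction of \(U\) versus \(U^*\) in the definition of \(\tau\), which I resolve by matching with \eqref{eq:tau}. The other is the treatment of the point \(\infty\) in each coordinate.
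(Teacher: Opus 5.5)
Your proposal is correct and follows essentially the paper's argument: the range of $1\tensor p_{I_j}(x)$ has orthonormal basis $\{\delta_l\tensor w_{I_j}(x)\}_{l}$, and transporting it through the unitary $U$ gives $\{v_{(l,j)}(x)\}_l$. Your reading $\overline{\tau}(T)=UTU^*$ is the one consistent with \eqref{eq:tau}, and your direct check of orthonormality and of $\sum_l v_{(l,j)}(x)v_{(l,j)}(x)^*=F_j(x)$ against \eqref{eq:F_j(x)} is a self-contained verification that on its own already proves the statement. Your substitution $w_{n_i}(x)=\delta_1$ when $x_{n_i}=\infty$ is fine here because the statement is pointwise in $x$; note that it is a choice rather than a limit of $w_{n_i}$.
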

\begin{proof}
  By definition \(F_j(x) = \overline{\tau}(1 \tensor p_{I_j}(x))\), which is a projection, for \(\tau\) is a \Star{}homomorphism, and hence so is \(\overline{\tau}\).
  Moreover, going back to the discussion immediately after \eqref{eq:tau} we can see that \(\tau\) is spatially implemented by the unitary operator \(U \colon \ell^2(\NN) \tensor \ell^2(\NN) \to \ell^2(\NN)\), where \(\delta_p \tensor \delta_q \mapsto \delta_{\phi(p,q)}\).
  Thus \(U\) takes the image of \(1 \tensor p_{I_j}(x)\) to the image of \(F_j(x)\), so the claim follows from \eqref{eq:w-I-j-vector} and the formula for \(U\).
  Notice that orthogonality of the collection \(\{v_{(l,j)}(x)\}_{l \in \NN}\) follows from that of \(\{\delta_l \tensor w_{I_j}(x)\}_{l \in \NN}\).

  The ``thus'' statement follows readily.
  Indeed, \(\{\delta_l\}_{l \in \NN}\) forms a basis of \(\ell^2(\NN)\). Similarly, \(\{v_{(l,j)}(x)\}_{l \in \NN}\) forms a basis of the range of \(F_j(x)\), and hence sending one basis to the other is indeed an isometry with the desired properties.
\end{proof}

As clarification, we briefly remark that the collection \(\{v_{(l,j)}(x)\}_{l \in \NN}\) is indeed orthogonal, but the larger collection \(\{v_{(l,j)}(x)\}_{l, j \in \NN}\) may not be. This will pose no problem though, as we will always work on a \(j\)-by-\(j\) basis.
The first instance of this is the following.

\begin{lemma}\label[lemma]{lemma:V_j continuous}
  The map $V_j \colon Z\rightarrow \CB(\ell^2(\NN))$ in \cref{prop:V_j isometry} is $^*$-strongly continuous. Thus \(V_j \in \CM(A)\).
\end{lemma}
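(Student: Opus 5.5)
We need to show \(x \mapsto V_j(x)\) and \(x \mapsto V_j(x)^*\) are strongly continuous and uniformly bounded. Then use the identification of \(\CM(A)\) with bounded \Star{}strongly continuous functions \(Z \to \CB(\ell^2(\NN))\) recalled in \cref{subsec:construction rordam}. Boundedness is immediate, since each \(V_j(x)\) is an isometry by \cref{prop:V_j isometry}, so \(\norm{V_j(x)} = 1\).

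\textbf{Continuity on basis vectors.} Since the family is uniformly bounded, a standard \(\varepsilon/3\) argument lets us check strong continuity only on the orthonormal basis \(\{\delta_l\}_{l\in\NN}\), and likewise for the adjoints. For fixed \(l\), the vector \(V_j(x)\delta_l = v_{(l,j)}(x)\) lies in the fixed finite dimensional subspace \(\mathrm{span}\{\delta_{\phi(l,\varsigma(\fa))} : \fa \in \{1,2\}^j\}\). Its coefficients are \(c(x_{I_j})^{1/2} x_\fa\), which factor as
\[
\prod_{i : \fa_i = 1} \frac{x_{n_i}}{\sqrt{1+\abs{x_{n_i}}^2}} \cdot \prod_{i : \fa_i = 2} \frac{1}{\sqrt{1+\abs{x_{n_i}}^2}}.
\]
Each factor is a continuous function of the single coordinate \(x_{n_i} \in S^2\). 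This includes the point \(\infty\), where the factors tend to \(1\) and \(0\) respectively, consistent with \cref{conv:infty-in-s-two} and \(w(\infty)=\delta_1\). Since the projections \(Z \to S^2\) are continuous, \(x \mapsto V_j(x)\delta_l\) is norm continuous.

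\textbf{Adjoints.} A direct computation gives \(V_j(x)^*\delta_m = \overline{\innerprod{v_{(l,j)}(x)}{\delta_m}}\,\delta_l\) when \(m = \phi(l,\varsigma(\fa))\) for some \(l\) and \(\fa\), and \(V_j(x)^*\delta_m = 0\) otherwise. Because \(\phi\) is a bijection, at most one pair \((l,\fa)\) occurs, so \(V_j(x)^*\delta_m\) is a single coefficient function times a fixed vector. That coefficient is \(\overline{c(x_{I_j})^{1/2}x_\fa}\), which is continuous by the same argument.

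\textbf{Main obstacle.} The only delicate points are the coordinates at \(\infty\) and the reduction from basis vectors to arbitrary vectors. For the latter, given \(\xi\) and \(\varepsilon\), approximate \(\xi\) by a finite combination of basis vectors and use \(\norm{V_j(x)} \le 1\) uniformly. This yields \Star{}strong continuity, and hence \(V_j \in \CM(A)\).
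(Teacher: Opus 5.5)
\textbf{Genuine gap: the coefficients are not continuous at $\infty$.} The step that fails is the one you flagged as delicate. The factor $x_{n_i}/\sqrt{1+\abs{x_{n_i}}^2}$ does \emph{not} tend to $1$ as $x_{n_i}\to\infty$ in $S^2$. Its modulus tends to $1$, but asymptotically it equals the phase $x_{n_i}/\abs{x_{n_i}}$, which has no limit: along $x_{n_i}=t$ and $x_{n_i}=-t$ with $t\to+\infty$ you get $1$ and $-1$.

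This cannot be repaired by choosing the value at $\infty$ differently. Fix $x\in Z$ with $x_{n_1}=\infty$, and let $x^{(k)}$ agree with $x$ except that $x^{(k)}_{n_1}=(-1)^k k$, so $x^{(k)}\to x$. We have $V_j(y)\delta_1=U(\delta_1\tensor w_{I_j}(y))$ with $U$ unitary, and the tensor factors of $w_{I_j}$ other than $w_{n_1}$ stay fixed along the sequence. Hence
\[
  \norm{V_j(x^{(k)})\delta_1-V_j(x^{(k+1)})\delta_1}=\norm{w_{n_1}(x^{(k)})-w_{n_1}(x^{(k+1)})}\longrightarrow\norm{(1,0)-(-1,0)}=2 .
\]
So $y\mapsto V_j(y)\delta_1$ is not norm continuous at $x$, whatever value is assigned at $\infty$, and therefore $V_j\notin\CM(A)$.

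Rephasing does not help either. Any unit vector field spanning $\operatorname{im}p(z)$ has the form $\lambda(z)w(z)$ with $\abs{\lambda(z)}=1$. Continuity at $\infty$ would make $\lambda(z)z/\abs{z}$ converge, so it would have winding number $0$ on large circles. That forces $\lambda$ to wind $-1$ times there, even though $\lambda$ extends continuously over the disc. This is just the non-triviality of the Bott line bundle over $S^2$.

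\textbf{The paper's proof has the same hole.} Via $U$ it obtains $\norm{(V_j(x_k)-V_j(x))\zeta}\le\norm{\zeta}\,\norm{w_{I_j}(x_k)-w_{I_j}(x)}$, which would even give norm continuity. It then asserts $w_{I_j}(x_k)\to w_{I_j}(x)$ because the defining sums are finite, ignoring coordinates at $\infty$. Its treatment of the adjoints,
\[
  \norm{(V_j(x_k)^*-V_j(x)^*)\zeta}\le\norm{(F_j(x_k)-F_j(x))\zeta}+\norm{(V_j(x)-V_j(x_k))V_j(x)^*\zeta},
\]
inherits the problem through the second term.

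Your route is a legitimate alternative to the paper's: you reduce to basis vectors using uniform boundedness, and you use the explicit formula for $V_j(x)^*\delta_m$. Both arguments are correct on the open dense set of $x$ whose coordinates indexed by $I_j$ are all finite. On all of $Z$, however, the statement fails for this particular $V_j$.

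What survives is only the abstract fact that $F\sim 1$ in $\CM(A)$ (R\o rdam's Lemma~5.1). So an isometry with range projection $F$ exists, but it cannot be built from $V_j$'s of the explicit form $\delta_l\mapsto U(\delta_l\tensor w(x))$.
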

\begin{proof}
  Suppose $x_k\rightarrow x$ in $Z$ and fix some $\zeta= \sum_{l \in \NN} \zeta_l \delta_l \in \ell^2(\NN)$, we want to prove that
  \begin{equation*}
    \norm{\big(V_j(x_k)-V_j(x)\big)\zeta}\rightarrow 0 \;\; \text{ and } \;\; \norm{\big(V_j(x_k)^*-V_j(x)^*\big)\zeta}\rightarrow 0 \;\; \text{ when } k \to \infty.
  \end{equation*}
  Recall the definition of \(w_{I_j}(x)\) from \eqref{eq:w-I-j-vector}, and notice that
  \begin{align*}
    \norm{\left(V_{j}\left(x_k\right) - V_j\left(x\right) \right) \zeta} & = \norm{\sum_{l \in \NN} \zeta_l \left(v_{(l,j)}\left(x_k\right) - v_{(l,j)}\left(x\right) \right)} \stackrel{{\rm \eqref{eq:tau}}}{=} \norm{U^* \sum_{l \in \NN} \zeta_l \left(v_{(l,j)}\left(x_k\right) - v_{(l,j)}\left(x\right) \right)} \\
    & = \norm{\sum_{l \in \NN} \zeta_l \left(\delta_l \tensor \left(w_{I_j}(x_k) - w_{I_j}(x)\right)\right)} \leq \norm{\zeta} \cdot \norm{w_{I_j}\left(x_k\right) - w_{I_j}\left(x\right)},
  \end{align*}
  which goes to zero since $x_{k}\to x$ and the sums defining \(w_{I_j}(\variable)\) contain only finitely many elements, at most \(2^j\) to be exact, see \eqref{eq:w-I-j-vector}.
  Using that $V_j(x_k)$ is an isometry for any $x_k\in Z$, which we know from \Cref{prop:V_j isometry}, we find
  \begin{align*}
    \norm{\big(V_j(x_k)^*-V_j(x)^*\big)\zeta}&=\norm{\big(V_j(x_k)V_j(x_k)^*-V_j(x_k)V_j(x)^*\big)\zeta}\\
    &\leq\norm{\big(V_j(x_k)V_j(x_k)^*-V_j(x)V_j(x)^*\big)\zeta} +\norm{\big(V_j(x)-V_j(x_k)\big)\big(V_j(x)^*\zeta\big)}\\
    &=\norm{\big(F_j(x_k)-F_j(x)\big)\zeta}+\norm{\big(V_j(x)-V_j(x_k)\big)\big(V_j(x)^*\zeta\big)},
  \end{align*}
  where the second term goes to zero by the previous argument, and the first term goes to zero by strong continuity of $\overline{\tau}$ and the fact that \(F_j(x) = \overline{\tau}(1 \tensor p_{I_j}(x))\).
\end{proof}

We may now use the maps $V_j$ to construct an isometry $T$ as in \eqref{eq:varphi-sec-two}.
For this, consider
\begin{equation}\label{eq:T_k}
  T_k\coloneqq\sum_{j=-k}^0S_jS_j^*+\sum_{j=1}^k S_jV_jS_j^*,
\end{equation}
which belongs to $\CM(A)$ for all $k \in \NN$ since $S_j\in \CM(A)$ and $V_j\in \CM(A)$ for all $j > 0$ by \cref{lemma:V_j continuous}.

\begin{lemma}\label[lemma]{lemma:strict convergence T_k}
  If $T_k$ is as in \eqref{eq:T_k}, then the sequence $\{T_k\}_{k\in\NN}$ is strictly convergent.
\end{lemma}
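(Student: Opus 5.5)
We show that $\{T_k\}_{k\in\NN}$ is strictly Cauchy in $\CM(A)$: for every $a\in A$ both $\{T_k a\}_k$ and $\{a T_k\}_k$ are norm Cauchy in $A$. Completeness of $A$ and the fact that a strictly Cauchy, norm-bounded sequence in $\CM(A)$ converges strictly then finish the argument. To this end, set $W_j\coloneqq 1$ for $j\leq 0$ and $W_j\coloneqq V_j$ for $j>0$, so that $T_k=\sum_{j=-k}^k S_jW_jS_j^*$.

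The first step is uniform boundedness. The range projections $S_jS_j^*$ are pairwise orthogonal, so for $k<k'$ the summands $S_jW_jS_j^*$ have pairwise orthogonal ranges and pairwise orthogonal supports. Indeed, $(S_jW_jS_j^*)^*(S_iW_iS_i^*)=S_jW_j^*S_j^*S_iW_iS_i^*=0$ for $i\neq j$, since $S_j^*S_i=0$, and similarly on the other side. Each $W_j(x)$ is an isometry by \cref{prop:V_j isometry}, so $T_k(x)$ is a partial isometry. Hence $\norm{T_k}\leq 1$, and likewise $\norm{T_{k'}-T_k}\leq 1$.

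The key step reduces the problem to the strict convergence of $\sum_j S_jS_j^*$. Write $E_{k}\coloneqq\sum_{|j|\leq k}S_jS_j^*$. Then $T_{k'}-T_k=(T_{k'}-T_k)(E_{k'}-E_k)$, and taking adjoints $T_{k'}-T_k=(E_{k'}-E_k)(T_{k'}-T_k)$. For $a\in A$ this gives
\[
  \norm{(T_{k'}-T_k)a}\leq\norm{(E_{k'}-E_k)a} \quad\text{and}\quad \norm{a(T_{k'}-T_k)}\leq\norm{a(E_{k'}-E_k)}.
\]
Since $\sum_{j}S_jS_j^*=1$ strictly, the partial sums $E_k$ are strictly Cauchy, and both right-hand sides tend to $0$ as $k,k'\to\infty$.

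The main obstacle is verifying that $\sum_j S_jS_j^*\to 1$ strictly for our concrete choice of $S_j$. Here $S_jS_j^*$ is the constant projection onto $\overline{\rm span}\{\delta_n:n\in C_j\}$, so $1-E_k$ is the constant projection onto $\ell^2$ of $\NN\setminus\bigcup_{|j|\leq k}C_j$. These sets decrease to $\emptyset$, so every fixed finite $P_n$ eventually satisfies $P_n(1-E_k)=0$. An elementary $a\in A$ is approximated in norm by $P_naP_n$, uniformly in $x\in Z$ by compactness of $Z$. This gives $\norm{(1-E_k)a}\to 0$ and $\norm{a(1-E_k)}\to 0$ by an $\varepsilon/3$ argument, using $\norm{1-E_k}\leq 1$.
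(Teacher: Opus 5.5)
Your proof is correct and is essentially the paper's argument: both use $S_j^*S_i=\delta_{ij}$ to write $T_{k'}-T_k$ as a contraction times $E_{k'}-E_k$, and then invoke the strict convergence of $\sum_j S_jS_j^*$. The paper uses $T_{k'}(E_{k'}-E_k)$ with $\norm{T_{k'}}\leq 1$ where you use $(T_{k'}-T_k)(E_{k'}-E_k)$; you also verify the strict convergence of $\sum_j S_jS_j^*$ for the concrete $S_j$, which the paper takes for granted.

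One small slip: since $T_k$ is not self-adjoint, $T_{k'}-T_k=(E_{k'}-E_k)(T_{k'}-T_k)$ does not follow by taking adjoints of your first identity. It does hold by the same direct computation, or by applying that identity to $(T_{k'}-T_k)^*$.
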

\begin{proof}
  Take $a\in A$. Since the multiplier algebra $\CM(A)$ is complete in the strict topology, it suffices to prove that $\norm{(T_l-T_k)a}\rightarrow 0$ and $\norm{a(T_l-T_k)}\rightarrow 0$ for $l,k\rightarrow\infty$. For this, first observe that for all $k$, we have
  \begin{equation*}
    T_k^*T_k=\sum_{j=-k}^0 S_jS_j^*+\sum_{j=1}^k S_jV_j^*V_jS_j^*=\sum_{j=-k}^k S_jS_j^*\leq 1,
  \end{equation*}
  which follows from \Cref{prop:V_j isometry} and the fact that $\sum_{j=-\infty}^\infty S_jS_j^* =1$. This implies $\norm{T_k^*T_k}\leq 1$ and thus $\norm{T_k}\leq 1$ for all $k$. For all $l,k$ with $l>k$, we have
  \begin{equation*}
    \norm{(T_l-T_k)a}=\norm{\Big(\sum_{j=-l}^{-k-1}S_jS^*_j+\sum_{j=k+1}^l S_jV_jS_j^*\Big)a}
    =\norm{T_l\Big(\sum_{j=-l}^{-k-1}S_jS^*_j+\sum_{j=k+1}^l S_jS_j^*\Big)a},
  \end{equation*}
  where we use the fact that $S_j^*S_i=0$ when $j\neq i$. Furthermore, we have
  \begin{align*}
    \norm{T_l\Big(\sum_{j=-l}^{-k-1}S_jS^*_j+\sum_{j=k+1}^l S_jS_j^*\Big)a} &\leq \norm{T_l}\cdot\norm{\Big(\sum_{j=-l}^{-k-1}S_jS^*_j+\sum_{j=k+1}^l S_jS_j^*\Big)a}\\
    &\leq\norm{\Big(\sum_{j=-l}^{-k-1}S_jS^*_j+\sum_{j=k+1}^l S_jS_j^*\Big)a}.
  \end{align*}
  Since $\sum_{j=-\infty}^\infty S_jS_j^*=1$ and the sum is strictly convergent, the latter expression goes to zero. This implies $\norm{(T_l-T_k)a}\rightarrow 0$.
    An analogous argument shows $\norm{a(T_l-T_k)}\rightarrow 0$.
\end{proof}

\begin{corollary}\label[corollary]{cor:T well-defined}
  The operator \(T\coloneqq\sum_{j=-\infty}^0S_jS_j^*+\sum_{j=1}^\infty S_jV_jS_j^*\) belongs to $\CM(A)$.
    Moreover, \(T\) is an isometry whose range projection is \(F\).
\end{corollary}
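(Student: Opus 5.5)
The plan is to get membership in \(\CM(A)\) directly from \cref{lemma:strict convergence T_k}, and then to verify the two operator identities on the approximants \(T_k\) and pass to the strict limit.

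For membership, \cref{lemma:strict convergence T_k} shows that \(\{T_k\}_{k\in\NN}\) is strictly Cauchy. Since \(\CM(A)\) is strictly complete, \(T_k\) converges strictly to some element of \(\CM(A)\). This limit is by definition the strictly convergent sum \(T\). I will also record that \(T_k^*\to T^*\) strictly, since the involution is strictly continuous.

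For the isometry property, the computation in the proof of \cref{lemma:strict convergence T_k} already gives
\[
T_k^*T_k=\sum_{j=-k}^{k}S_jS_j^* \longrightarrow 1
\]
strictly. This uses \(S_i^*S_j=\delta_{ij}\), \(V_j^*V_j=1\) from \cref{prop:V_j isometry}, and \(\sum_j S_jS_j^*=1\). The one point needing care is that multiplication is only jointly strictly continuous on bounded sets. Here \(\norm{T_k}\le 1\), so for \(a\in A\) I will write
\[
T^*Ta-T_k^*T_ka=T^*(T-T_k)a+(T^*-T_k^*)T_ka .
\]
The first term goes to zero. For the second, note that \(T_ka\) converges in norm to \(Ta\in A\), so it equals \((T^*-T_k^*)Ta\) plus a term of norm at most \(2\norm{(T_k-T)a}\), and both parts tend to zero. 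Hence \(T^*T=1\).

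For the range projection, the same bounded-strict-continuity argument gives \(T_kT_k^*\to TT^*\) strictly. I then compute, using \(S_i^*S_j=\delta_{ij}\),
\[
T_kT_k^*=\sum_{j=-k}^{0}S_jS_j^*+\sum_{j=1}^{k}S_jV_jV_j^*S_j^*=\sum_{j=-k}^{k}S_jF_jS_j^*,
\]
where \(F_j=1\) for \(j\le 0\) and \(V_jV_j^*=F_j\) for \(j>0\) by \cref{prop:V_j isometry}. By the sketch of proof of \cref{lemma:F_j}, these partial sums converge strictly to \(F=\sum_j S_jF_jS_j^*\). Thus \(TT^*=F\).

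The only subtle step is justifying the passage of products to the strict limit, and uniform boundedness of the \(T_k\) handles it. Everything else is direct computation.
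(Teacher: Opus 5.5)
Your proposal is correct and follows the paper's proof. Membership comes from the strict-convergence lemma, and the identities \(T^*T=1\) and \(TT^*=F\) come from \(S_i^*S_j=\delta_{ij}\), \(V_j^*V_j=1\) and \(V_jV_j^*=F_j\). The only difference is that the paper expands \(T^*T(x)\) and \(TT^*(x)\) term by term, pointwise and without comment, while you compute on the truncations \(T_k\) and justify the passage to the strict limit using \(\norm{T_k}\le 1\); this adds rigor the paper leaves implicit.
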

\begin{proof}
    The fact that the sum defining \(T\) is strictly convergent follows from \cref{lemma:strict convergence T_k} and the discussion around \eqref{eq:T_k}.
    Using \Cref{prop:V_j isometry}, we find for all $x\in Z$
    \begin{equation*}
     T^*T(x)=\sum_{j=-\infty}^0 s_js_j^*+\sum_{j=1}^\infty s_jV_j(x)^*V_j(x)s_j^*=\sum_{j=-\infty}^\infty s_js_j^*=1.
    \end{equation*}
    Likewise, \(TT^*(x) = \sum_{j=-\infty}^0 s_js_j^* + \sum_{j=1}^\infty s_jV_j(x)V_j(x)^*s_j^* =  F(x)\), as desired.
\end{proof}

We now use this isometry $T$ in our definition of $\varphi \colon A \to \CM(A)$ (cf.\ \eqref{eq:varphi-sec-two}):
\begin{align}\label{eq:phi}
    \varphi \colon A & \to \CM\left(A\right), \\
  f & \mapsto \sum_{j=-\infty}^\infty T^*S_j\varphi_j(f)S_j^*T= \sum_{j=-\infty}^0 S_j\varphi_j(f)S_j^*+\sum_{j=1}^\infty S_jV_j^*\varphi_j(f)V_jS_j^*, \nonumber
\end{align}
where $V_j^* \colon Z\rightarrow \CB(\ell^2(\NN))$ is defined by $V_j^*(x) \coloneqq V_j(x)^*$ for all $x\in Z$.
As in \eqref{eq:phi bar}, we may extend \(\varphi\) to a unital \(^*\)-homormophism \(\overline{\varphi} \colon \CM(A) \to \CM(A)\) in a unique manner.
Indeed, for elements $f \in \CM(A)$, we can describe $\overline{\varphi}$ by taking the limit of the sequence $\{\varphi(e_nf)\}_{n\in\NN}$ as in \eqref{eq:phi bar}.
In the rest of the paper, we will use this description of $\overline{\varphi}$ to determine some properties of the \cstar{}algebra $D$ in \cref{def:rordams algebra}.

\vspace{2mm}

We start by using \eqref{eq:phi} to prove that $\overline{\varphi} \colon \CM(A) \to \CM(A)$ preserves certain structure: it preserves \emph{diagonal elements} and \emph{normalizers}.
For this, recall there is an obvious faithful representation $\CM(A) \to \ell^\infty(Z, \CB(\ell^2(\NN))) \coloneqq \prod_{x \in Z} \CB(\ell^2(\NN))$, simply by forgetting the topology of \(Z\).
Indeed, we are tacitly always assuming that elements \(f \in \CM(A)\) are functions \(f \colon Z \to \CB(\ell^2(\NN))\) (with certain continuity conditions).
By \emph{diagonal elements} in $\CM(A)$ we mean elements of the subalgebra $\diagelms \coloneqq \ell^\infty(Z, \ell^\infty(\NN)) \cap \CM(A)$, where we see \(\ell^\infty(\NN) \subseteq \CB(\ell^2(\NN))\) in the usual manner.
We observe the following.

\begin{lemma} \label[lemma]{lemma:diagelms}
    \(\diagelms\) is precisely the set of functions \(f \colon Z \to \ell^\infty(\NN)\) that are \(^*\)-strongly continuous, that is, \(f(x_k)(n) \to f(x)(n)\) for all \(n \in \NN\) whenever \(x_k \to x\) in \(Z\).
\end{lemma}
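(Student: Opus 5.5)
The plan is to prove the two inclusions, using the identification of \(\CM(A)\) recalled in \cref{subsec:construction rordam}: \(\CM(A)\) consists of the bounded \(^*\)-strongly continuous functions \(Z \to \CB(\ell^2(\NN))\). Since \(\diagelms = \ell^\infty(Z, \ell^\infty(\NN)) \cap \CM(A)\), it suffices to show the following. For a bounded function \(f \colon Z \to \ell^\infty(\NN)\), \(^*\)-strong continuity is equivalent to pointwise convergence of the coordinates \(f(x_k)(n) \to f(x)(n)\) whenever \(x_k \to x\). Boundedness is automatic: membership in \(\ell^\infty(Z,\ell^\infty(\NN))\) gives it in one direction, and in the other we only consider bounded functions (alternatively, \(Z\) is compact, so a strongly continuous function is bounded by the uniform boundedness principle). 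We work with sequences throughout, which is legitimate because \(Z\) is metrizable.

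For the forward direction, let \(f \in \diagelms\) and \(x_k \to x\). Strong continuity applied to the vector \(\delta_n\) gives \(f(x_k)\delta_n = f(x_k)(n)\delta_n \to f(x)(n)\delta_n\). Taking the inner product with \(\delta_n\) then yields \(f(x_k)(n) \to f(x)(n)\).

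For the converse, let \(f\) be bounded by a constant \(M\) with coordinatewise convergent values, and fix \(\zeta = \sum_n \zeta_n \delta_n \in \ell^2(\NN)\) and \(\varepsilon > 0\).
\begin{itemize}
  \item Choose \(N\) with \(\sum_{n > N} \abs{\zeta_n}^2 < \varepsilon^2\).
  \item Split the difference as
  \[
    \norm{(f(x_k) - f(x))\zeta}^2 = \sum_{n \le N} \abs{f(x_k)(n) - f(x)(n)}^2 \abs{\zeta_n}^2 + \sum_{n > N} \abs{f(x_k)(n) - f(x)(n)}^2 \abs{\zeta_n}^2 .
  \]
  \item The first sum is finite, so it tends to zero by the coordinatewise convergence.
  \item The second sum is at most \(4M^2 \varepsilon^2\).
\end{itemize}
This proves strong continuity. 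Since \(f(x)^*\) is the diagonal operator with entries \(\overline{f(x)(n)}\), the same argument applied to the conjugate coordinates handles the adjoints, giving \(^*\)-strong continuity and hence \(f \in \CM(A)\).

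The only genuine step is the tail estimate in the converse, and it depends on the uniform bound \(M\). This is why the statement should be read as saying that the sup-norm of \(f\) is finite, which is implicit in \(\ell^\infty(Z, \ell^\infty(\NN))\).
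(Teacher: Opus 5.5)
Your proof is correct and follows the same route as the paper, which simply invokes the identification of \(\CM(A)\) with bounded \(^*\)-strongly continuous functions \(Z \to \CB(\ell^2(\NN))\); you additionally carry out the elementary check that, for uniformly bounded diagonal-valued functions, this is equivalent to coordinatewise convergence. Your remark that the converse genuinely needs the uniform bound is correct: coordinatewise continuity alone does not imply strong continuity, so it is a valid sharpening of the paper's ``that is'' paraphrase.
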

\begin{proof}
    Simply notice that elements \(f \in \CM(A)\) are \(^*\)-strongly continuous functions \(f \colon Z \to \CB(\ell^2(\NN))\).
\end{proof}

\begin{lemma}\label[lemma]{lemma:diagonality preserved}
  If $f\in \diagelms$, then $\overline{\varphi}(f)\in \diagelms$.
\end{lemma}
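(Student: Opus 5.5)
The plan is to compute \(\overline{\varphi}(f)\) pointwise in \(x \in Z\) through the explicit formula \eqref{eq:phi}, and to check that every summand acts diagonally on \(\{\delta_k\}_{k \in \NN}\).

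First I reduce to \(\varphi\) on \(A\). Since \(e_n(x) = P_n = \sum_{i \le n} e_{ii}\) is diagonal, \(g_n \coloneqq e_n f\) belongs to \(\diagelms \cap A\). By \eqref{eq:phi bar}, \(\overline{\varphi}(f)\) is the strict limit of \(\varphi(g_n)\). Strict convergence in \(\CM(A)\) implies strong convergence of \(\varphi(g_n)(x)\) for each fixed \(x\): multiply by a constant rank-one element of \(A\). The diagonal algebra \(\ell^\infty(\NN) \subseteq \CB(\ell^2(\NN))\) is strongly closed. Hence \(\overline{\varphi}(f)(x) \in \ell^\infty(\NN)\) for every \(x\). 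Membership of \(\overline{\varphi}(f)\) in \(\CM(A)\) is automatic, so \(\overline{\varphi}(f) \in \diagelms\). It therefore suffices to prove that \(\varphi(g)(x)\) is diagonal for every diagonal \(g \in A\) and every \(x \in Z\).

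Next I look at the summands in \eqref{eq:phi} with \(j \le 0\). By \eqref{eq:phi_j for j<0}, \(\varphi_j(g)(x) = g(x_{\nu(j,1)}, x_{\nu(j,2)}, \dots)\), which is again diagonal, say \(\sum_k d_k e_{kk}\). Since \(s_j \delta_k = \delta_{\sigma_j(k)}\), the operator \(s_j \varphi_j(g)(x) s_j^*\) maps \(\delta_{\sigma_j(k)} \mapsto d_k \delta_{\sigma_j(k)}\) and vanishes on \(\delta_m\) for \(m \notin C_j\). So it is diagonal.

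The main step is the summands with \(j > 0\). Write \(y \coloneqq (c_{j,1}, \dots, c_{j,j}, x_{\nu(j,j+1)}, \dots)\) and \(g(y) = \sum_m d_m e_{mm}\). By \eqref{eq:phi_j for j>0} and the spatial description of \(\tau\) via \(U\) after \eqref{eq:tau}, \(\varphi_j(g)(x)\) acts as \(U(g(y) \tensor p_{I_j}(x))U^*\) on \(\ell^2(\NN)\); the orientation of \(U\) is fixed by \eqref{eq:tau}. By \cref{prop:V_j isometry}, \(V_j(x)\delta_m = v_{(m,j)}(x) = U(\delta_m \tensor w_{I_j}(x))\). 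Since \(p_{I_j}(x)\) is the projection onto \(w_{I_j}(x)\), this gives
\[
    \varphi_j(g)(x) V_j(x) \delta_m = U\big(g(y)\delta_m \tensor p_{I_j}(x) w_{I_j}(x)\big) = d_m\, V_j(x)\delta_m .
\]
Using \(V_j(x)^*V_j(x) = 1\), it follows that \(V_j(x)^*\varphi_j(g)(x)V_j(x) = \sum_m d_m(y) e_{mm}\) is diagonal. Conjugating by \(s_j\) then yields a diagonal operator supported on \(C_j\), exactly as in the case \(j \le 0\).

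Finally, the \(j\)-th summands have pairwise orthogonal supports \(C_j\). Their strictly convergent sum is therefore diagonal at every \(x\). I expect the only delicate point to be the bookkeeping of \(\tau\) versus \(U\) and \(U^*\) in the third step. This must be made consistent with the formula for \(v_{(l,j)}(x)\) in \cref{prop:V_j isometry}.
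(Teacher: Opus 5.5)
Your proposal is correct and follows essentially the same route as the paper: reduce to diagonal \(g \in A\) via \(\overline{\varphi}(f) = \lim_n \varphi(e_n f)\), note that the \(j \le 0\) summands are diagonal, and for \(j > 0\) show that each \(V_j(x)\delta_m\) is an eigenvector of \(\varphi_j(g)(x)\) with eigenvalue \(g(y)_{mm}\). The paper reaches the same eigenvector identity by first writing \(\tau(g(y)\otimes p_{I_j}(x)) = \overline{\tau}(g(y)\otimes 1)V_j(x)V_j(x)^*\) and then computing \(\overline{\tau}(g(y)\otimes 1)V_j(x)\delta_h = a_h v_{(h,j)}(x)\). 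Your orientation \(\tau(k\otimes k') = U(k\otimes k')U^*\) is the one consistent with \eqref{eq:tau} and with \(v_{(l,j)}(x) = U(\delta_l\otimes w_{I_j}(x))\). Your remark that strict convergence gives pointwise strong convergence, so that the limit stays diagonal, makes precise the limiting step that the paper only asserts.
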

\begin{proof}
    By definition \(\diagelms = \ell^\infty(Z, \ell^\infty(\NN)) \cap \CM(A)\).
    As \(\overline{\varphi}\) maps \(\CM(A)\) into \(\CM(A)\) it suffices to show that \(\overline{\varphi}(f) \in \ell^\infty(Z, \ell^\infty(\NN))\) for every function \(f \in \ell^\infty(Z, \ell^\infty(\NN)) \cap \CM(A)\).

  We will first assume that $f\in C\big(Z,c_0(\NN)\big)$, so $f\in A$ and $f(x) \in c_0(\NN)$ for all $x\in Z$. Now, by \eqref{eq:phi}, we have that $\overline{\varphi}(f)=\varphi(f)=\sum_{j=-\infty}^0S_j\varphi_j(f)S_j^*+\sum_{j=1}^\infty S_jV_j^*\varphi_j(f)V_jS_j^*$.
    Thus
  \begin{align*}
    \varphi(f)(x)=&\sum_{j=-\infty}^0s_jf(x_{\nu(j,1)},\dots)s^*_j+\sum_{j=1}^\infty s_jV^*_j(x)\tau\big(f(c_{j,1},\dots,c_{j,j},x_{\nu(j,j+1)},\dots)\otimes p_{I_j}(x)\big)V_j(x)s^*_j.
  \end{align*}
  It is clear that all terms of the first sum are still diagonal because of the way the isometries $s_j$ are defined.
  In order to prove that the second sum is still diagonal, it suffices to check that every summand
  \begin{equation*}
    G_j(x)\coloneqq V^*_j(x)\tau\big(f(c_{j,1}, \dots,c_{j,j}, x_{\nu(j,j+1)}, \dots)\otimes p_{I_j}(x)\big)V_j(x)
  \end{equation*}
  is diagonal, where $j>0$ and $x\in Z$.
    Observe
  \begin{align*}
    \tau\big(f(c_{j,1},\dots,c_{j,j},x_{\nu(j,j+1)}, \dots)\otimes p_{I_j}(x)\big)&=\overline{\tau}\big(f(c_{j,1},\dots,c_{j,j},x_{\nu(j,j+1)},\dots)\otimes 1\big)\overline{\tau}\big(1\otimes p_{I_j}(x)\big)\\
    &=\overline{\tau}\big(f(c_{j,1},\dots,c_{j,j},x_{\nu(j,j+1)},\dots)\otimes 1\big)V_j(x)V_j(x)^*,
  \end{align*}
  where we used \Cref{prop:V_j isometry}. This implies \(G_j(x) = V_j(x)^*\overline{\tau}\big(f(c_{j,1}, \dots,c_{j,j},x_{\nu(j,j+1)}, \dots)\otimes 1\big)V_j(x)\), since $V_j(x)^*V_j(x)=1$ by \Cref{prop:V_j isometry} again. Expressing \(f(c_{j,1}, \dots,c_{j,j}, x_{\nu(j,j+1)},\dots)=\sum_k a_k e_{kk}\),
  it follows that
  \(\overline{\tau}\big(f(c_{j,1},\dots,c_{j,j},x_{\nu(j,j+1)},\dots)\otimes 1\big)=\sum_{k,l} a_k e_{\phi(k,l)\phi(k,l)}\), recall \eqref{eq:tau}.
  This implies that
  \begin{align*}
    \overline{\tau}\big(& f(c_{j,1}, \dots,c_{j,j},x_{\nu(j,j+1)},\dots) \otimes 1\big) V_j(x) \delta_h = \sum_{k, l \in \NN} a_k e_{\phi(k,l) \phi(k,l)} v_{(h,j)}(x) \allowdisplaybreaks \\
    & = \sum_{k, l \in \NN} a_k e_{\phi(k,l) \phi(k,l)} \sum_{\fa \in \{1,2\}^j} c(x_{I_j})^{1/2} x_\fa \delta_{\phi(h,\varsigma(\fa))} = \sum_{\fa \in \{1,2\}^j} c(x_{I_j})^{1/2} x_\fa a_h \delta_{\phi(h,\varsigma(\fa))} = a_h v_{(h,j)}(x).
  \end{align*}
  Thus
  \begin{align*}
    \langle\delta_{h'},G_j(x)\delta_h\rangle&=\langle V_j(x)\delta_{h'},\overline{\tau}\big(f(c_{j,1},...,c_{j,j},x_{\nu(j,j+1)},...)\otimes 1\big)V_j(x)\delta_h\rangle =\langle v_{(h',j)}(x),a_h v_{(h,j)}(x)\rangle.
  \end{align*}
    If $h\neq h'$ this inner product is zero (see \cref{prop:V_j isometry}). Whence $G_j(x)$ is diagonal for all $j$ and all $x$, which in turn implies that $\varphi(f)(x)$ is diagonal for all $x$.
    
    For a general $f\in \ell^\infty(Z, \ell^\infty(\NN)) \cap \CM(A)$ we have $\overline{\varphi}(f)(x)=\lim_{n\rightarrow\infty}\varphi(fe_n)(x)$, where $e_n(x)=P_n$ with $P_n$ the projection onto the first $n$ basis vectors $\{\delta_i\}_{i = 1}^n \subseteq \ell^2(\NN)$.
    Each $\varphi(fe_n)(x)$ is diagonal, and hence so is the limit.
\end{proof}

Having proved that \(\overline{\varphi} \colon \CM(A) \to \CM(A)\) preserves diagonal elements we turn our attention to the fact that it commutes with the following canonical conditional expectation \(\overline{E} \colon \CM(A) \to \ell^\infty(Z, \ell^\infty(\NN))\).
\begin{definition} \label[definition]{def:e-bar}
    Let $P \colon \CB(\ell^2(\NN))\rightarrow l^\infty(\NN)$ be the usual (faithful) conditional expectation, \emph{i.e.}\ \(P(t)(k) \coloneqq \innerprod{t \delta_k}{\delta_k}\).
    Denote by $\overline{E} \colon \CM(A)\rightarrow \ell^\infty(Z, l^\infty(\NN))$ the map that sends an element $f\in \CM(A)$ to its diagonal part, so $\overline{E}(f)(x) \coloneqq P(f(x))$ for all $x\in Z$. 
\end{definition}

Note $\overline{E}$ is a faithful conditional expectation, for so is \(P\). 
Furtermore, as posed, \cref{def:e-bar} states no continuity condition on the image \(\overline{E}(f)\).
The following remedies this.

\begin{lemma} \label[lemma]{lemma:image-e-bar}
    The image of \(\overline{E} \colon \CM(A) \to \ell^\infty(Z, \ell^\infty(\NN))\) is exactly \(\diagelms\).
\end{lemma}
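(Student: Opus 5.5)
We prove the two inclusions separately, using the characterization of \(\diagelms\) in \cref{lemma:diagelms} as the set of functions \(g \colon Z \to \ell^\infty(\NN)\) that are bounded and \(^*\)-strongly continuous, i.e.\ coordinatewise continuous.

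\emph{The image lies in \(\diagelms\).} Let \(f \in \CM(A)\), so \(f\) is a bounded \(^*\)-strongly continuous function \(Z \to \CB(\ell^2(\NN))\). Then \(\overline{E}(f)(x) = P(f(x))\) is a diagonal operator, and \(\norm{\overline{E}(f)(x)} \leq \norm{f(x)} \leq \norm{f}\), so \(\overline{E}(f) \in \ell^\infty(Z, \ell^\infty(\NN))\). For continuity, fix \(k \in \NN\) and let \(x_m \to x\) in \(Z\). Then
\[
\overline{E}(f)(x_m)(k) = \innerprod{f(x_m)\delta_k}{\delta_k} \to \innerprod{f(x)\delta_k}{\delta_k} = \overline{E}(f)(x)(k),
\]
because \(f(x_m)\delta_k \to f(x)\delta_k\) in norm by strong continuity. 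Thus each coordinate of \(\overline{E}(f)\) is continuous. Since \(\overline{E}(f)(x)\) is diagonal and uniformly bounded, coordinatewise continuity is exactly \(^*\)-strong continuity: for \(\zeta = \sum_k \zeta_k \delta_k\), approximate \(\zeta\) by finitely supported vectors and use the uniform bound. \Cref{lemma:diagelms} then gives \(\overline{E}(f) \in \diagelms\).

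\emph{\(\diagelms\) lies in the image.} If \(g \in \diagelms\), then \(g \in \CM(A)\) and \(g(x) \in \ell^\infty(\NN)\) for every \(x\), so \(P(g(x)) = g(x)\). Hence \(\overline{E}(g) = g\), and \(g\) is in the image.

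The only point requiring care is the passage from continuity of each diagonal entry to \(^*\)-strong continuity, and this is a standard \(\varepsilon/3\) argument using boundedness. It also shows that \(\overline{E}\) is a conditional expectation onto \(\diagelms\).
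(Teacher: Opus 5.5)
Your proof is correct and follows essentially the same route as the paper: apply \(P\) pointwise and transfer continuity, where the paper treats \(f \in A\) via norm continuity and contractivity of \(P\) and dismisses the multiplier case as ``a similar argument''. You handle general \(f \in \CM(A)\) directly through convergence of \(\innerprod{f(x_m)\delta_k}{\delta_k}\) together with the uniform bound, and you also make explicit the reverse inclusion \(\overline{E}(g) = g\) for \(g \in \diagelms\), which the paper leaves implicit.
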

\begin{proof}
    Let \(f \in A = C(Z, \CK)\) be given.
    By continuity of \(f\) we have that \(f(x_k) \to f(x)\) (in norm) whenever \(x_k \to x\) (in \(Z\)).
    Since the conditional expectation \(P \colon \CB(\ell^2(\NN)) \to \ell^\infty(\NN)\) is contractive it follows that \(\overline{E}(f)(x_k) \to \overline{E}(f)(x)\), as desired.
    A general \(f \in \CM(A)\) is precisely a \(^*\)-strongly continuous \(f \colon Z \to \CB(\ell^2(\NN))\), and a similar argument as before shows the statement.
\end{proof}

By \cref{lemma:diagelms,lemma:image-e-bar} we henceforth see \(\overline{E}\) as a map \(\overline{E} \colon \CM(A) \to \diagelms\).

\begin{corollary}\label[corollary]{cor:E commutes with phi}
  For all $f\in \CM(A)$, we have $\overline{\varphi}\big(\overline{E}(f)\big)=\overline{E}\big(\overline{\varphi}(f)\big)$. In other words, the following diagram is commutative:
\begin{center}\begin{tikzcd}
  {\CM(A)} & {\CM(A)} \\
  {\diagelms} & {\diagelms}
  \arrow["{\overline{\varphi}}", from=1-1, to=1-2]
  \arrow["\overline{E}", from=1-1, to=2-1]
  \arrow["\overline{E}", from=1-2, to=2-2]
  \arrow["{\overline{\varphi}}", from=2-1, to=2-2].
\end{tikzcd} \end{center}
\end{corollary}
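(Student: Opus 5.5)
We need to show $\overline{\varphi}\circ\overline{E}=\overline{E}\circ\overline{\varphi}$ on $\CM(A)$. The plan is to first check it on $A$, where $\overline{\varphi}=\varphi$ is given explicitly by \eqref{eq:phi}. We then pass to $\CM(A)$ using the approximate unit $e_n(x)=P_n$ and a strict/pointwise-strong limit argument.

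\textbf{Step 1: the case $f\in A$.} Write $\varphi(f)(x)=\sum_{j\le 0}s_j\varphi_j(f)(x)s_j^*+\sum_{j\ge1}s_jG_j(x)s_j^*$, where $G_j(x)=V_j(x)^*\overline{\tau}(f(c_{j,1},\dots,c_{j,j},x_{\nu(j,j+1)},\dots)\otimes 1)V_j(x)$. This reduction of $G_j$ was already carried out in the proof of \cref{lemma:diagonality preserved}, and it uses only that $\overline{\tau}(\cdot\otimes1)$ commutes with $F_j(x)=\overline{\tau}(1\otimes p_{I_j}(x))$. Since the $s_j$ are induced by injections $\sigma_j$ with disjoint ranges $C_j$, a diagonal entry of $\varphi(f)(x)$ at $\sigma_j(k)$ equals the $(k,k)$ entry of the $j$-th summand. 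So it suffices to show that, for each $j$, the summand commutes with taking diagonals.
\begin{itemize}
\item For $j\le0$, $\varphi_j$ is just precomposition by a map $Z\to Z$, which clearly commutes with $P$ pointwise.
\item For $j\ge1$, we compute $\langle G_j(x)\delta_h,\delta_h\rangle=\langle \overline{\tau}(g\otimes1)v_{(h,j)}(x),v_{(h,j)}(x)\rangle$, where $g=f(c_{j,1},\dots)$. Since $\overline{\tau}(g\otimes1)\delta_{\phi(h',m)}=\sum_k g_{kh'}\delta_{\phi(k,m)}$ and $v_{(h,j)}(x)$ is supported on $\{\phi(h,m)\}_m$ with norm one, the inner product equals $g_{hh}\|v_{(h,j)}(x)\|^2=g_{hh}$.
\item On the other side, replacing $g$ by $P(g)$ gives the same value; this is the computation done in \cref{lemma:diagonality preserved}, which yields $a_h$. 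The off-diagonal entries of $\varphi(P(f))$ vanish by that lemma.
\end{itemize}
Hence $P(\varphi(f)(x))=\varphi(\overline{E}(f))(x)$.

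\textbf{Step 2: extension to $\CM(A)$.} For general $f\in\CM(A)$ we have $\overline{\varphi}(f)=\lim_n\varphi(e_nf)$ strictly, and hence pointwise strongly at each $x$. Since $P$ is computed entrywise via $\langle\cdot\,\delta_k,\delta_k\rangle$, it is continuous for pointwise weak convergence. Therefore $\overline{E}(\overline{\varphi}(f))=\lim_n \overline{E}(\varphi(e_nf))=\lim_n\varphi(\overline{E}(e_nf))$ pointwise weakly.

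Because $e_n$ is diagonal, $\overline{E}(e_nf)=e_n\overline{E}(f)$. Thus $\varphi(\overline{E}(e_nf))=\varphi(e_n\overline{E}(f))$, which converges strictly to $\overline{\varphi}(\overline{E}(f))$. The two limits therefore agree.

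\textbf{Main obstacle.} The delicate point is Step 1 for $j\ge1$: verifying that conjugation by $V_j(x)$ sends the diagonal entry $g_{hh}$ to the $h$-th diagonal entry exactly, with no contributions from off-diagonal $g_{kh}$. This holds because $\overline{\tau}(e_{kh}\otimes1)$ maps $\mathrm{span}\{\delta_{\phi(h,m)}\}$ into $\mathrm{span}\{\delta_{\phi(k,m)}\}$, which is orthogonal to $v_{(h,j)}(x)$ when $k\ne h$.
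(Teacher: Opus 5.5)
Your proof is correct and follows essentially the paper's route. For $f\in A$, the paper shows that $\varphi\big(f-\overline{E}(f)\big)$ has vanishing diagonal, via the same computation of $\langle G_j(x)\delta_h,\delta_h\rangle$ using injectivity of $\phi$ and the support of $v_{(h,j)}(x)$; by linearity this is equivalent to your matching of diagonal entries. It then passes to $\CM(A)$ through $\varphi\big(e_n\overline{E}(f)\big)=\overline{E}\big(\varphi(e_nf)\big)$. Your Step 2 is slightly more careful than the paper's, since you justify $\overline{E}(e_nf)=e_n\overline{E}(f)$ and state the (pointwise weak) topology in which $\overline{E}$ passes through the limit.
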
 

\begin{proof}
  We may split any given $f\in \CM(A)$ into its diagonal part $\overline{E}(f)$ and its off-diagonal part $f-\overline{E}(f)$.
    As $\overline{\varphi}(f)=\overline{\varphi}\big(\overline{E}(f)\big)+\overline{\varphi}\big(f-\overline{E}(f)\big)$, by \cref{lemma:diagonality preserved} it suffices to show that $\overline{\varphi}(f-\overline{E}(f))$ is off-diagonal.
    We do this by proving that $\overline{\varphi}\big(f-\overline{E}(f)\big)(x)$ cannot contain multiples of $e_{ii}$ for any $i$ and any $x\in Z$.
    We may first assume that $f\in A$, whence $\overline{\varphi}(f - \overline{E}(f)) = \varphi\big(f-\overline{E}(f)\big)$. Using \eqref{eq:phi}, we find 
  \begin{equation*}
    \varphi(f-\overline{E}(f))=\sum_{j=-\infty}^0S_j\varphi_j(f-\overline{E}(f))S_j^*+\sum_{j=1}^\infty S_jV_j^*\varphi_j(f-\overline{E}(f))V_jS_j^*.
  \end{equation*}
  Clearly, the terms for $j\leq 0$ will not give any diagonal terms because of the definition of $\varphi_j$ in \eqref{eq:phi_j for j<0} and the way we defined the $S_j$. Take some arbitrary $j>0$. By the definition of the $S_j$, it suffices to look at $G_j\coloneqq V_j^*\varphi_j(f-\overline{E}(f))V_j$.
    Given any $x\in Z$ we have that
  \begin{equation*}
    G_j(x)=V_j(x)^*\overline{\tau}\big((f-\overline{E}(f))(c_{j,1},\dots,c_{j,j},x_{\nu(j,j+1)},\dots)\otimes 1\big)V_j(x),
  \end{equation*}
  which follows from the proof of \Cref{lemma:diagonality preserved}.
    To simplify notation let $y \coloneqq (c_{j,1},\dots,c_{j,j},x_{\nu(j,j+1)},\dots) \in Z$.
    The element $(f-\overline{E}(f))(y)$ is of the form $\sum_{k \neq l}a_{kl}e_{kl}$ for some $\{a_{kl}\}_{k \neq l}\subseteq\CC$, since $f-\overline{E}(f)$ is exactly the off-diagonal part of $f$. This implies
  \begin{align}
    \overline{\tau}\big((f-\overline{E}(f))(y)\otimes 1\big)&=\lim_{n\rightarrow\infty} \tau\big((f-\overline{E}(f))(y)\otimes P_n\big)= \lim_{n\rightarrow\infty}\sum_{k\neq l,m\leq n} a_{kl}e_{\phi(k,m)\phi(l,m)}\notag\\
    &=\sum_m \sum_{k\neq l} a_{kl}e_{\phi(k,m)\phi(l,m)},\label{eq:tau of off-diagonal}
  \end{align}
  where we used the expression of $\tau$ from \eqref{eq:tau}.
    This implies that $\overline{\tau}\big((f-\overline{E}(f))(y)\otimes 1\big)$ will only contain off-diagonal terms since $\phi$ is injective. Using this, we can then see that
  \begin{align*}
    \langle \delta_h ,G_j(x)\delta_h\rangle & = \langle V_j(x)\delta_h ,\overline{\tau}\big((f-\overline{E}(f))(y)\otimes 1\big)V_j(x)\delta_h\rangle \allowdisplaybreaks \\
    & \stackrel{{\rm \eqref{eq:tau of off-diagonal}}}{=} \langle v_{(h,j)}(x), \sum_m \sum_{k \neq l} \sum_{\fa \in \{1,2\}^j} x_\fa a_{kl} e_{\phi(k,m)\phi(l,m)} \delta_{\phi(h, \varsigma(\fa))} \rangle \allowdisplaybreaks \\
    & = \sum_{\fa, \fb \{1,2\}^j} \sum_{k \neq h} x_\fb \overline{x}_\fa \overline{a}_{kh} \cdot \langle \delta_{\phi(h,\varsigma(\fb))}, \delta_{\phi(k,\varsigma(\fa))} \rangle = 0,
  \end{align*}
  for all $h \in \NN$ and all $x\in Z$, where the last equality holds since \(k \neq h\) and \(\phi\) is injective, recall also \cref{prop:V_j isometry} for the notation we used.
    This implies that $G_j(x)$ does not contain multiples of $e_{ii}$ for any $i$ and $x\in Z$, as claimed, whence $\varphi\big(\overline{E}(f)\big)=\overline{E}\big(\varphi(f)\big)$.
    If $f\in \CM(A)$, by \eqref{eq:phi bar} we have 
  \begin{equation*}
    \overline{\varphi}\big(\overline{E}(f)\big)=\lim_{n\rightarrow\infty}\varphi\big(e_n\overline{E}(f)\big)=\lim_{n\rightarrow \infty}\overline{E}\big(\varphi(e_n f)\big)=\overline{E}\big(\lim_{n\rightarrow\infty}\varphi(e_n f)\big)=\overline{E}\big(\overline{\varphi}(f)\big),
  \end{equation*}
  which proves the statement.
\end{proof}
  
\subsection{Construction of the Cartan subalgebra \texorpdfstring{$\widetilde{D}$}{D}}\label[subsection]{subsec:construction Cartan}  
We may now construct a subalgebra $\widetilde{D}$ for the algebra $D$ from \Cref{subsec:construction rordam} by following the Cartan pair
\[
    \widetilde{A} \coloneqq C\left(Z\right) \tensor c_0 \subseteq C\left(Z\right) \tensor \CK \cong C\left(Z, \CK\right) = A
\]
along the sequence \eqref{eq:sequence-cm-varphi}.
Note that \(\widetilde{A}\) is the usual Cartan subalgebra of \(A\), as \(c_0\) is the unique (up to unitary conjugacy) Cartan subalgebra of \(\CK\).
More precisely, see \eqref{eq:sequence-cm-varphi}, we have an inductive system \(\CM(A) \to \CM(A) \to \cdots\), whose connecting maps are \(\overline{\varphi}\), and whose limit is (by definition) \(B\).
Let $\widetilde{A}_0\coloneqq\mu_{\infty,0}(\widetilde{A})$ and $\widetilde{A}_n\coloneqq\alpha^n(\widetilde{A}_0)$ for all $n\in\ZZ$, similar to how we defined $A_0$ and $A_n$ in \Cref{sec:preliminaries}.
Likewise, let
\begin{equation*}
  \widetilde{D}_n\coloneqq C^*(\widetilde{A}_{-n},\widetilde{A}_{-n+1},\dots,\widetilde{A}_0,\dots,\widetilde{A}_{n-1},\widetilde{A}_n) \;\; \text{ and } \;\; \widetilde{D} \coloneqq \overline{\bigcup_{n=1}^\infty \widetilde{D}_n}.
\end{equation*}
By \Cref{lemma:diagonality preserved} the sequence \eqref{eq:sequence-cm-varphi} restricts to $\diagelms$, \emph{i.e.}\ this diagram commutes: 
\begin{equation} \label{eq:big-diagram}
    \begin{tikzcd}
      {\CM(A)} & {\CM(A)} & {\CM(A)} & \cdots & B & D \\
      {\diagelms} & {\diagelms} & {\diagelms} & \cdots & C & \widetilde{D}.
      \arrow["{\overline{\varphi}}", from=1-1, to=1-2]
      \arrow["{\overline{E}}", from=1-1, to=2-1]
      \arrow["{\overline{\varphi}}", from=1-2, to=1-3]
      \arrow["{\overline{E}}", from=1-2, to=2-2]
      \arrow["{\overline{\varphi}}", from=1-3, to=1-4]
      \arrow["{\overline{E}}", from=1-3, to=2-3]
      \arrow["{\overline{\varphi}}", from=1-4, to=1-5]
      \arrow["{\overline{\varphi}}", from=2-1, to=2-2]
      \arrow["{\overline{\varphi}}", from=2-2, to=2-3]
      \arrow["{\overline{\varphi}}", from=2-3, to=2-4]
      \arrow["{\overline{\varphi}}", from=2-4, to=2-5]
      \arrow[phantom, "\supseteq", from=1-6, to=1-5]
      \arrow[phantom, "\supseteq", from=2-6, to=2-5]
      \arrow[phantom, "\rotatebox{90}{$\subseteq$}", from=2-6, to=1-6]
    \end{tikzcd}
\end{equation}
Here $C$ is the inductive limit of the lower sequence and the diagram is commutative by \Cref{cor:E commutes with phi}. The algebra $D$ is a subset of the inductive limit $B$. Since $\widetilde{A}_n\subseteq C$ for all $n$, we have $\widetilde{D}\subseteq C$. 

Recall from \Cref{prop:properties rordam} that 
\(D_n=A_{-n}+A_{-n+1}+\dots+A_n\),
where we can write each element of $D_n$ as a sum of elements in $A_{-n},A_{-n+1},\dots,A_n$ in a unique way. We can prove a similar result for $\widetilde{D}_n$.
  
\begin{lemma}\label[lemma]{lemma:D_n tilde as sum}
  We have \(\widetilde{D}_n=\widetilde{A}_{-n}+\cdots+\widetilde{A}_n,\)
  and each element $d\in \widetilde{D}_n$ can be written as a sum $d=d_{-n}+\cdots+d_n$ with $d_i\in \widetilde{A}_i$ in a unique way.
\end{lemma}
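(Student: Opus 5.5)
The plan is to deduce both assertions from \cref{prop:properties rordam}, using only that \(\widetilde{A}_i \subseteq A_i\) for every \(i\). This inclusion holds because \(\widetilde{A} \subseteq A\), so \(\widetilde{A}_0 = \mu_{\infty,0}(\widetilde{A}) \subseteq \mu_{\infty,0}(A) = A_0\), and applying \(\alpha^i\) gives \(\widetilde{A}_i \subseteq A_i\). Uniqueness then comes for free: if \(d = d_{-n} + \cdots + d_n = d'_{-n} + \cdots + d'_n\) with \(d_i, d'_i \in \widetilde{A}_i \subseteq A_i\), then \(d \in D_n\), and the uniqueness part of \cref{prop:properties rordam}\cref{prop:properties rordam:v} forces \(d_i = d'_i\).

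For the equality \(\widetilde{D}_n = \widetilde{A}_{-n} + \cdots + \widetilde{A}_n\), one inclusion is trivial, since the right-hand side sits inside the \cstar{}algebra generated by the \(\widetilde{A}_i\). For the other inclusion I will show that \(S \coloneqq \widetilde{A}_{-n} + \cdots + \widetilde{A}_n\) is a \cstar{}subalgebra. Each \(\widetilde{A}_i\) is a \cstar{}algebra, being the image of one under an injective \Star{}homomorphism, so \(S\) is self-adjoint. For closure under products I need the analogue of \cref{prop:properties rordam}\cref{prop:properties rordam:iv}, namely \(\widetilde{A}_m\widetilde{A}_k \subseteq \widetilde{A}_{\min\{m,k\}}\). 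Applying \(\alpha^{-k}\), it suffices to prove \(\widetilde{A}_m\widetilde{A}_0 \subseteq \widetilde{A}_m\) for \(m \le 0\) (the other order follows by taking adjoints). Unwinding the inductive limit, this amounts to checking that \(\overline{\varphi}^{(r)}(\widetilde{A})\,\widetilde{A} \subseteq \widetilde{A}\) inside \(\CM(A)\), where \(r = -m\). We already know from \cref{prop:properties rordam}\cref{prop:properties rordam:iv}, or from \(A\) being an ideal in \(\CM(A)\), that this product lies in \(A\). By \cref{lemma:diagonality preserved} it also lies in \(\diagelms\), since both factors are diagonal. Finally, \(A \cap \diagelms = C(Z, c_0(\NN)) = \widetilde{A}\), because a norm-continuous compact-valued function that is diagonal at every point takes values in \(c_0(\NN)\).

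The remaining issue is closedness of \(S\), since a finite sum of closed subspaces need not be closed. My approach is to use the conditional expectation. By \cref{cor:E commutes with phi}, \(\overline{E}\) passes to the inductive limit \(B\), giving a contractive map \(E_\infty \colon B \to C\). This map sends each \(A_i\) onto \(\widetilde{A}_i\), because \(\overline{E}(A) = \widetilde{A}\). Now take a sequence \(s_k \in S\) converging to some \(d\). Then \(d\) lies in \(D_n\), since \(D_n = \sum_i A_i\) is a \cstar{}algebra by \cref{prop:properties rordam}\cref{prop:properties rordam:v} and is therefore closed. Write \(d = \sum_i d_i\) with \(d_i \in A_i\). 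Since \(s_k \in C\) and \(E_\infty\) is the identity on \(C\), we get \(s_k = E_\infty(s_k) \to E_\infty(d) = \sum_i E_\infty(d_i)\), which lies in \(\sum_i \widetilde{A}_i = S\). Hence \(d = E_\infty(d) \in S\).

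I expect the main obstacle to be making \(E_\infty\) rigorous, in particular showing that \(E_\infty\) maps \(A_i = \alpha^i(A_0)\) into \(\widetilde{A}_i\). For this I will check that \(E_\infty\) commutes with \(\alpha\), which holds because \(\overline{E}\) commutes with \(\overline{\varphi}\) at every stage of the inductive system.
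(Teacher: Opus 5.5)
Your proof is correct, but it departs from the paper's at the two non-trivial points. For the product rule $\widetilde{A}_m\widetilde{A}_k\subseteq\widetilde{A}_{\min\{m,k\}}$ you do what the paper does, namely diagonality from \cref{lemma:diagonality preserved} together with $A\cap\diagelms=\widetilde{A}$, only more carefully. The paper then records $\overline{\varphi}(\widetilde{A})\cap\widetilde{A}=\{0\}$ and says the rest follows ``in the same way as'' R\o rdam's decomposition of $D_n$. It does not spell this out; the natural reading is the standard induction. There, $\widetilde{A}_{-n}$ is an ideal of the algebra generated by $\widetilde{A}_{-n},\dots,\widetilde{A}_n$, so its sum with the (inductively closed) \cstar{}algebra $\widetilde{A}_{-n+1}+\cdots+\widetilde{A}_n$ is closed. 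Uniqueness comes from $\widetilde{A}_i$ meeting $\widetilde{A}_{i+1}+\cdots+\widetilde{A}_n$ trivially. You instead inherit uniqueness directly from the uniqueness in $D_n$, which makes the trivial-intersection fact unnecessary. You get closedness because $E_\infty|_{D_n}$ is a bounded idempotent on the closed space $D_n$ with range $\widetilde{A}_{-n}+\cdots+\widetilde{A}_n$, and ranges of bounded idempotents are closed. The paper's route is intrinsic to the $\widetilde{A}_i$ and exhibits $\widetilde{D}_n$ as an iterated extension, parallel to $D_n$. Yours uses R\o rdam's result for $D_n$ and \cref{cor:E commutes with phi} as black boxes, but in exchange it already constructs the expectation. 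Your $E_\infty|_{D_n}$ is exactly the map $E_n$ of \cref{theorem:faithful conditional expectation} (compare \eqref{eq:E_n(a)}), obtained here with contractivity built in. The paper instead defines $E_n$ termwise afterwards, using the unique decomposition this lemma provides.
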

\begin{proof}
  Since $\widetilde{A}\subseteq A$, it follows from \Cref{prop:properties rordam}~\cref{prop:properties rordam:iii} that $\overline{\varphi}(\widetilde{A})\cap \widetilde{A}=\{0\}$. Furthermore, since $\widetilde{A}_n\subseteq A_n$ for all $n$ and the product of diagonal elements is again diagonal, it follows from \Cref{prop:properties rordam}~\cref{prop:properties rordam:iv} that $\widetilde{A}_m\widetilde{A}_n\subseteq \widetilde{A}_{\min\{m,n\}}$. Now the result follows in the same way as \Cref{prop:properties rordam}~\cref{prop:properties rordam:v}.
\end{proof} 
  
\cref{lemma:D_n tilde as sum} will be used several times in the sequel.
As $\alpha$ is a \(^*\)-isomorphism and $\mu_{\infty,0}$ is injective (by \Cref{prop:properties rordam}), one can see that $\alpha^n\circ \mu_{\infty,0}$ is a $^*$-isomorphism between $A$ and $A_n$ carrying $\widetilde{A}$ to $\widetilde{A}_n$ (by definition).
This implies that $\widetilde{A}_n$ is a Cartan subalgebra of $A_n$ for all $n \in \ZZ$.
The rest of this section is dedicated to proving that the Cartan structure is preserved, meaning that $\widetilde{D} \subseteq D$ is still a Cartan pair.
We have to prove that $\widetilde{D}$ is a regular masa and there exists a faithful conditional expectation $E \colon D\rightarrow \widetilde{D}$.

\begin{remark}
    The expectation \(E \colon D \to \widetilde{D}\) will be the restriction of \(\overline{E} \colon \CM(A) \to \diagelms\) to \(D\).
    However, note that the existence of the diagram \eqref{eq:big-diagram} is not enough, for we aim to apply a result from \cite{li-raad-2023} which, in principle, only applies to \emph{separable} algebras.
    On the contrary, \(\CM(A)\) and \(\diagelms\) are most definitely not (norm) separable.
\end{remark}

\begin{theorem}\label[theorem]{theorem:masa}
  $\widetilde{D}_n$ is maximally abelian in $D_n$ for all $n\in\NN$.
\end{theorem}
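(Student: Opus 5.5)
We argue by induction on \(n\), via the unique decomposition \(d = d_{-n} + \cdots + d_n\) with \(d_i \in A_i\) from \cref{prop:properties rordam}~\cref{prop:properties rordam:v} and its diagonal analogue \cref{lemma:D_n tilde as sum}. First, \(\widetilde{D}_n\) is abelian: everything lives inside \(C\), the inductive limit of the commutative algebras \(\diagelms\) in \eqref{eq:big-diagram}, and \(C\) is commutative. For maximality, fix \(d \in D_n\) commuting with \(\widetilde{D}_n\), decompose \(d = \sum_{i=-n}^n d_i\), and aim to show each \(d_i \in \widetilde{A}_i\).

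\textbf{Step 1 (the lowest level).} Commute \(d\) with elements \(a \in \widetilde{A}_{-n}\). By \cref{prop:properties rordam}~\cref{prop:properties rordam:iv}, \(d_i a, a d_i \in A_{-n}\) for all \(i\), so the commutator \([d,a] = \sum_i [d_i,a]\) lies in \(A_{-n}\), and this gives no separation by itself. So instead commute with \(b \in \widetilde{A}_{n}\), the top level. Then \([d_i,b] \in A_{\min\{i,n\}} = A_i\), and uniqueness of the decomposition forces \([d_i,b] = 0\) for every \(i\). For \(i = n\), the element \(d_n\) commutes with \(\widetilde{A}_n\), which is a masa in \(A_n\) (since \(\alpha^n \circ \mu_{\infty,0}\) carries \(C(Z,c_0) \subseteq C(Z,\CK)\) onto it). Hence \(d_n \in \widetilde{A}_n \subseteq \widetilde{D}_n\).

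\textbf{Step 2 (descending).} Replace \(d\) by \(d - d_n\), which still commutes with \(\widetilde{D}_n\) and lies in \(A_{-n} + \cdots + A_{n-1}\). We would like to repeat the argument with \(\widetilde{A}_{n-1}\) to get \(d_{n-1} \in \widetilde{A}_{n-1}\), and so on down to \(d_{-n}\). Writing \(d' = d - d_n\), for \(b \in \widetilde{A}_{n-1}\) we have \([d_i,b] \in A_i\) for \(i \le n-1\). Uniqueness then gives \([d_i,b] = 0\) for all \(i\), so \(d_{n-1}\) commutes with \(\widetilde{A}_{n-1}\) and belongs to \(\widetilde{A}_{n-1}\). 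Iterating yields \(d \in \widetilde{A}_{-n} + \cdots + \widetilde{A}_n = \widetilde{D}_n\).

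\textbf{Main obstacle.} The weak point is the step "\(d_i\) commutes with \(\widetilde{A}_i\) implies \(d_i \in \widetilde{A}_i\)". It uses that \(\widetilde{A}_i\) is a masa in \(A_i\) for the relative commutant taken inside \(A_i\), which is fine since \(d_i \in A_i\). What actually needs care is that the products \(d_i b\) really lie in \(A_{\min\{i,j\}}\) as \emph{elements of} \(A_i\), so that uniqueness of the decomposition applies to \(\sum_i [d_i,b] = 0\). This is exactly \cref{prop:properties rordam}~\cref{prop:properties rordam:iv}\cref{prop:properties rordam:v}, so I expect it to go through.

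A subtlety to double-check is whether \(d\) is only required to commute with \(\widetilde{D}_n\) rather than with all of \(\widetilde{A}_n\). This causes no problem, since \(\widetilde{A}_j \subseteq \widetilde{D}_n\) for \(|j| \le n\).
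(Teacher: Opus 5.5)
Your argument is correct, but it takes a different route from the paper's.

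The paper writes \(a=\mu_{\infty,n}(f)\) with \(f=b_{-n}+\overline{\varphi}(b_{-n+1})+\dots+\overline{\varphi}^{2n}(b_n)\in\CM(A)\), working in a single copy of \(\CM(A)\). It uses only that \(a\) commutes with the constant matrix units \(\mu_{\infty,n}(p_{ii})\in\widetilde{A}_{-n}\), and concludes that \(f\) is pointwise diagonal. It then separates the levels from the bottom up, using \(\overline{E}\circ\overline{\varphi}=\overline{\varphi}\circ\overline{E}\) (\cref{cor:E commutes with phi}), \(\overline{\varphi}(\CM(A))\cap A=\{0\}\), and injectivity of \(\overline{\varphi}\).

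You stay with the direct-sum decomposition in \(B\) and test against every level \(\widetilde{A}_j\), descending from \(j=n\). The absorption \(A_iA_j\subseteq A_{\min\{i,j\}}\) and the directness of \(A_{-n}+\dots+A_n\) isolate \([d_j,b]\in A_j\). After that you only need that \(\widetilde{A}_j\) is a masa in \(A_j\), transported from \(C(Z,c_0)\subseteq C(Z,\CK)\).

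What each route buys:
\begin{itemize}
\item Your route never uses the compatibility of \(\overline{E}\) with \(\overline{\varphi}\). Diagonality preservation enters only through the abelianness of \(\widetilde{D}_n\). So it is a purely formal argument that works for any graded system with these absorption and directness properties.
\item The paper's route proves more: the relative commutant in \(D_n\) of \(\widetilde{A}_{-n}\) alone, indeed of the \(\mu_{\infty,n}(p_{ii})\), is already \(\widetilde{D}_n\). So your remark that commuting with \(\widetilde{A}_{-n}\) ``gives no separation'' holds only for your decomposition argument. The paper extracts the separation through \(\overline{E}\).
\end{itemize}

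A minor point: what you call induction on \(n\) is really a descending induction on the level \(j\), with \(n\) fixed.
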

\begin{proof}
    Take $n\in \NN$ and suppose $a\in D_n$ commutes with all elements in $\widetilde{D}_n$. We want to prove that $a\in \widetilde{D}_n$. We can use \Cref{prop:properties rordam} to write \(a=a_{-n}+a_{-n+1}+\dots+a_n\) for some $\{a_i\}_{i = -n}^n \subseteq A_i$ in a unique way.
    Noting $a_i=\alpha^i(\mu_{\infty,0}(b_i))$ for some $b_i\in A$, one has that
  \begin{align*}
    a&=\mu_{\infty,n}(b_{-n})+\mu_{\infty,n-1}(\overline{\varphi}(b_{-n+1}))+\dots+\mu_{\infty,0}(\overline{\varphi}^{2n}(b_n))\\
    &=\mu_{\infty,n}\big(b_{-n}+\overline{\varphi}(b_{-n+1})+\dots+\overline{\varphi}^{2n}(b_n)\big)\eqqcolon\mu_{\infty,n}(f).
  \end{align*}
  Denote by $p_{ii}$ the element in $A$ with $p_{ii}(x)=e_{ii}$ for all $x\in Z$. Now $a$ will commute with $\mu_{\infty,n}(p_{ii})$ for all $i$, so $\mu_{\infty,n}(p_{ii}f)=\mu_{\infty,n}(fp_{ii})$ for all $i$. By \Cref{prop:properties rordam}, we know that  $\mu_{\infty,n}$ is injective, which implies that $f$ commutes with all $p_{ii}$. This means that $f(x)$ commutes with all $e_{ii}$, so $f(x)$ is diagonal for all $x\in Z$. 
    
  Now we prove that each term $\overline{\varphi}^{n+i}(b_i)$ is actually diagonal. We can split $b_{-n}$ into its diagonal and off-diagonal part, so $b_{-n}=\overline{E}(b_{-n})+\big(b_{-n}-\overline{E}(b_{-n})\big)$. Since $b_{-n}(x)$ is a compact operator for all $x$, $\big(b_{-n}-\overline{E}(b_{-n})\big)(x)$ will also be compact for all $x$. We have $f=b_{-n}+\overline{\varphi}(g)$ for some $g\in \CM(A)$, and $f$ is diagonal, implying that the off-diagonal part of $b_{-n}+\overline{\varphi}(g)$ is zero. This implies 
  \begin{equation*}
    b_{-n}+\overline{\varphi}(g)-\overline{E}\big(b_{-n}+\overline{\varphi}(g)\big)=0,
  \end{equation*} 
  so \(\overline{\varphi}(g)-\overline{E}\big(\overline{\varphi}(g)\big)=-b_{-n}+\overline{E}(b_{-n})\in A\).
  From \Cref{cor:E commutes with phi}, we know that $\overline{\varphi}(g)-\overline{E}\big(\overline{\varphi}(g)\big)=\overline{\varphi}\big(g-\overline{E}(g)\big)$. Likewise, \Cref{prop:properties rordam} yields $\overline{\varphi}(\CM(A))\cap A=\{0\}$, which implies  
  \begin{equation*}
    0=\overline{\varphi}\big(g-\overline{E}(g)\big)=b_{-n}-\overline{E}(b_{-n}).
  \end{equation*}
  Now $b_{-n}$ and $\overline{\varphi}(g)$ are both diagonal, so $\overline{\varphi}(g)=\overline{E}\big(\overline{\varphi}(g)\big)=\overline{\varphi}\big(\overline{E}(g)\big)$. By injectivity of $\overline{\varphi}$, which also follows from \Cref{prop:properties rordam}, we find $g=\overline{E}(g)$, so $g$ is diagonal. We can now use the same reasoning to find that $b_{-n+1}$ is diagonal.
  Iterating this process yields that all $b_i$ belong to $\widetilde{A}$, which gives
  \begin{equation*}
    a=\alpha^{-n}(\mu_{\infty,0}(b_{-n}))+...+\alpha^n(\mu_{\infty,0}(b_n))
  \end{equation*}
  with $b_i\in \widetilde{A}$ for all $i$, and whence $a\in \widetilde{D}_n$.
\end{proof}
  
We turn to the proof that $\widetilde{D}$ is regular in $D$.
For this we first need some preliminary lemmas, the first of which shows that it suffices to prove that the images in $A_i$ of normalizers of $\widetilde{A}$ are normalizers of $\widetilde{D}$. 
  
\begin{lemma}\label[lemma]{lemma:regularity1}
  $D$ is generated as a \cstar{}algebra by the elements of the form $\alpha^n\big(\mu_{\infty,0}(b)\big)$ for $b\in N(\widetilde{A}\subseteq A)$.
\end{lemma}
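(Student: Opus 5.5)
The plan is to reduce to the statement that \(A\) itself is generated by \(N(\widetilde{A}\subseteq A)\), and then push this through the maps \(\alpha^n\circ\mu_{\infty,0}\). Recall that by definition
\[
    D=\overline{\bigcup_{n} D_n}, \qquad D_n=C^*(A_{-n},\dots,A_n), \qquad A_i=\alpha^i(\mu_{\infty,0}(A)).
\]
So \(D\) is the \cstar{}algebra generated by \(\bigcup_{i\in\ZZ}A_i\). It therefore suffices to show that each \(A_i\) lies in the \cstar{}algebra generated by
\[
    \{\alpha^i(\mu_{\infty,0}(b)) : b\in N(\widetilde{A}\subseteq A)\}.
\]
Since \(\alpha^i\circ\mu_{\infty,0}\colon A\to A_i\) is a \Star{}homomorphism (indeed an isomorphism onto \(A_i\)), it maps \(C^*(N(\widetilde{A}\subseteq A))\) onto the \cstar{}algebra generated by those images. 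Hence everything reduces to the claim that \(A=C^*(N(\widetilde{A}\subseteq A))\). The reverse inclusion is trivial, since these elements lie in \(A_i\subseteq D\).

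For the claim, I would exhibit explicit normalizers whose span is dense in \(A=C(Z)\tensor\CK\). The natural candidates are \(g\tensor e_{kl}\), that is, the function \(x\mapsto g(x)e_{kl}\), for \(g\in C(Z)\) and matrix units \(e_{kl}\). One checks directly that these normalize \(\widetilde{A}=C(Z,c_0(\NN))\). For \(h\in\widetilde{A}\) written pointwise as \(h(x)=\sum_m h_m(x)e_{mm}\), we have
\[
    (g\tensor e_{kl})\,h\,(g\tensor e_{kl})^* = \abs{g}^2 h_l\tensor e_{kk}\in\widetilde{A},
\]
and symmetrically \((g\tensor e_{kl})^*\,h\,(g\tensor e_{kl})=\abs{g}^2 h_k\tensor e_{ll}\in\widetilde{A}\). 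The linear span of such elementary tensors is dense in \(C(Z)\tensor\CK\), because the algebraic tensor product \(C(Z)\odot{\rm span}\{e_{kl}\}\) is dense in the (unique) \cstar{}tensor product.

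Alternatively, one could cite that \(\widetilde{A}\subseteq A\) is a Cartan pair, which is regular by definition. Since the paper asserts this as "the usual Cartan subalgebra", that already suffices; the explicit check is only needed for completeness.

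No step is a real obstacle. The only point requiring mild care is the bookkeeping that \(D\) is generated by \(\bigcup_i A_i\), and that \cstar{}generation commutes with applying the \Star{}homomorphisms \(\alpha^i\circ\mu_{\infty,0}\). This follows because the image of the closed \Star{}algebra generated by a set \(S\) under a \Star{}homomorphism is contained in the closed \Star{}algebra generated by the image of \(S\), by continuity.
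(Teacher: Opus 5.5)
Your proof is correct and follows the same idea as the paper: you reduce to the regularity of $\widetilde{A}\subseteq A$ and transport it along the injective \Star{}homomorphisms $\alpha^i\circ\mu_{\infty,0}$. The only cosmetic difference is that the paper runs an $\varepsilon$-approximation through the decomposition $D_n=A_{-n}+\dots+A_n$ of \cref{prop:properties rordam}, whereas you work directly from $D=C^*(\bigcup_i A_i)$ and the fact that \Star{}homomorphisms respect \cstar{}generation; your version is slightly cleaner and needs only continuity of these maps.
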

\begin{proof}
  Take any $d\in D$ and $\varepsilon>0$.
  By the inductive limit construction we can find $d_n\in D_n$ for some $n$ such that $\norm{d-d_n}<\varepsilon/2$. By \Cref{prop:properties rordam} we have 
  \begin{equation*}
    d_n=\mu_{\infty,n}(a_{-n})+\mu_{\infty,n-1}(a_{-n+1})+\dots+\mu_{\infty,0}(\overline{\varphi}^n(a_n))
  \end{equation*}
  for some $\{a_i\}_{i = -n}^n \subseteq A$.
  Since $\widetilde{A} \subseteq A$ is regular there are normalizers $b_{i,j}$ such that 
  \begin{equation*}
    \norm{a_i-(b_{i,1}+\dots+b_{i,k_i})}<\frac{\varepsilon}{2(2n+1)}
  \end{equation*}
  for all $i=-n,-n+1,\dots,n$.
  Since the $\mu_{\infty,n}$ and $\overline{\varphi}$ are injective $^*$-homomorphisms by \Cref{prop:properties rordam} and thus isometric, we have
  \begin{align*}
    \norm{d_n-\big(\mu_{\infty,n}(b_{-n,1}+\dots+b_{-n,k_{-n}})+\dots+\mu_{\infty,0}(\overline{\varphi}^n(b_{n,1}+\dots+b_{n,k_n}))\big)} <\varepsilon/2.
  \end{align*}
  Thus, if we let
  \(
    c\coloneqq \mu_{\infty,n}(b_{-n,1}+\dots+b_{-n,k_{-n}})+\dots+\mu_{\infty,0}(\overline{\varphi}^n(b_{n,1}+\dots+b_{n,k_n})),
  \)
  then $\norm{d-c}<\varepsilon$ and $c$ belongs to the \cstar{}algebra generated by the elements of the form $\alpha^n(\mu_{\infty,0}(b))$ for $b\in N(\widetilde{A}\subseteq A)$.
\end{proof}
  
Note that for $b\in N(\widetilde{A}\subseteq A)$, we have $b(x)\in N(c_0(\NN)\subseteq\CK)$ for all $x\in Z$. This implies that each $b(x)$ is of the form $\sum a_{kl}e_{kl}$ where for each $k$ in the sum, there is a unique non-zero $l$ and vice versa. The following lemma shows that $\overline{\varphi}^k(b)$ will still have a similar form for a normalizer $b$.
In the following (and within its proof) if we write $\sum a_{kl}e_{kl}$ then we mean that every term \(a_{kl}\) is non-zero.
  
\begin{lemma}\label[lemma]{lemma:regularity2}
  If $b\in N(\widetilde{A}\subseteq A)$, then for each $i\in\NN$ and $x\in Z$, $\overline{\varphi}^i(b)(x)$ is of the form $\sum a_{kl}e_{kl}$ where $a_{kl}\neq0$ implies $a_{kj}=0=a_{hl}$ for all $j\neq l$ and $h\neq k$.
\end{lemma}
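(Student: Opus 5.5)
We argue by induction on \(i\), the case \(i=0\) being the description of normalizers of \(c_0(\NN)\subseteq\CK\) given just before the statement. The property is pointwise: an element \(g\in\CM(A)\) has it iff for every \(x\in Z\) the operator \(g(x)\) is a \emph{partial-permutation-weighted} operator. That means there is a partial injection \(\theta_x\) of \(\NN\) and scalars \(\lambda_l\) such that \(g(x)\delta_l=\lambda_l\delta_{\theta_x(l)}\) for \(l\in{\rm dom}(\theta_x)\) and \(g(x)\delta_l=0\) otherwise. So it suffices to show that \(\overline{\varphi}\) maps elements with this pointwise property (call them \emph{monomial}) that lie in \(\CM(A)\) to monomial elements.

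First we would treat \(g\in A\) monomial, using \eqref{eq:phi}. For \(j\le 0\), \(\varphi_j(g)(x)=g(y)\) for a rearranged point \(y\), hence it is monomial, and \(s_j\,\cdot\,s_j^*\) transports a partial permutation of \(\NN\) to one of \(C_j\). For \(j>0\), as in the proof of \cref{lemma:diagonality preserved}, \(V_j(x)^*\varphi_j(g)(x)V_j(x)=V_j(x)^*\overline{\tau}(g(y)\tensor 1)V_j(x)\) with \(y=(c_{j,1},\dots,c_{j,j},x_{\nu(j,j+1)},\dots)\). Writing \(g(y)\delta_h=\lambda_h\delta_{\theta(h)}\), we get \(\overline{\tau}(g(y)\tensor1)\delta_{\phi(h,m)}=\lambda_h\delta_{\phi(\theta(h),m)}\). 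Hence \(\overline{\tau}(g(y)\tensor 1)v_{(h,j)}(x)=\lambda_h v_{(\theta(h),j)}(x)\), because the vectors \(v_{(l,j)}(x)\) only differ in the first slot of \(\phi\). By orthonormality from \cref{prop:V_j isometry}, \(V_j(x)^*\overline{\tau}(g(y)\tensor1)V_j(x)\delta_h=\lambda_h\delta_{\theta(h)}\), which is again monomial (indeed, it equals \(g(y)\)). Since the ranges \(s_js_j^*\) are mutually orthogonal and so are the domains, the strictly convergent sum over \(j\) of these monomial pieces is monomial: each basis vector \(\delta_k\) lies in exactly one \(C_j\), so it has at most one nonzero image coordinate, and dually each row has at most one nonzero entry.

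For general monomial \(g\in\CM(A)\) we use \(\overline{\varphi}(g)=\lim_n\varphi(ge_n)\) strictly, and hence pointwise strongly, by \eqref{eq:phi bar}. Each \(ge_n\) is monomial and lies in \(A\) (it is \(g\) cut down to finitely many columns, which is norm-continuous in \(x\)), so each \(\varphi(ge_n)(x)\) is monomial. Strong limits of monomial operators need not be monomial in general. Here, however, by the computation above the pieces are compatible: \(\varphi(ge_{n+1})(x)\) extends \(\varphi(ge_n)(x)\) on an increasing family of orthogonal blocks. The limit's matrix coefficients \(\langle\delta_k,\overline{\varphi}(g)(x)\delta_l\rangle\) are eventually constant in \(n\), so the support pattern remains that of a partial injection. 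Applying this \(i\) times, since \(\overline{\varphi}(b)\in\CM(A)\) is again monomial, gives the claim.

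The main obstacle is the \(j>0\) step, since \(V_j(x)\) is not a permutation matrix and one must see that conjugation by it preserves monomiality. The identity \(\overline{\tau}(g(y)\tensor1)v_{(h,j)}=\lambda_hv_{(\theta(h),j)}\) resolves this, and we would verify it carefully with \eqref{eq:tau}. A secondary subtlety is the passage to the strict limit, handled via the eventual constancy of coefficients.
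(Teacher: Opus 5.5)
Your proposal is correct and follows the paper's proof essentially step by step. You reduce to a single application of $\overline{\varphi}$ to a pointwise-monomial element of $\CM(A)$, and treat $g\in A$ summand by summand via the formula for $\varphi$. The $j>0$ terms go through $V_j(x)^*\overline{\tau}(g(y)\otimes 1)V_j(x)$, and you then pass to the strict limit over $e_n$. Your observation that this conjugate is exactly $g(y)$ is a slightly cleaner way to get the row and column conditions at once, where the paper does columns and then appeals to the adjoint.

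One small correction: the condition is closed under entrywise limits, hence under weak and strong operator limits. If the limit had two nonzero entries in one row or column, the approximants would too for large $n$. So your claim that strong limits of monomial operators need not be monomial is false, and the eventual-constancy argument, though correct, is unnecessary.
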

\begin{proof}
  It is enough to prove that $\overline{\varphi}(b)(x)$ is of this form whenever $b\in \CM(A)$ with $b(x)=\sum a_{kl}e_{kl}$ where each $k$ corresponds to a unique $l$ and vice versa for all $x\in Z$. We first assume that $b\in A$. In this case, we know from the proof of \Cref{lemma:diagonality preserved} that
  \begin{equation*}
    \overline{\varphi}(b)(x)=\sum_{j=-\infty}^0 s_jb(x_{\nu(j,1)},\dots)s_j^*+\sum_{j=1}^\infty s_jV_j(x)^*\overline{\tau}\big(b(c_{j,1},\dots,c_{j,j},x_{\nu(j,j+1)},\dots)\otimes1\big)V_j(x)s_j^*
  \end{equation*}
  for all $x\in Z$. Take $x\in Z$ and define $y_j\in Z$ for all $j>0$ by $y_j \coloneqq (c_{j,1},\dots,c_{j,j},x_{\nu(j,j+1)},\dots)$. Take $j>0$ arbitrary.
  By the definition of the $s_j$ it is enough to look at each term separately.
  For $j\leq0$, the terms are clearly of the right form.
  This implies that we only have to prove that \mbox{$V_j(x)^*\overline{\tau}\big(b(y_j)\otimes 1\big)V_j(x)$} is of the right form for all $j>0$.
  By the definition of the map $\overline{\tau}$ in \eqref{eq:tau} one sees that the factor \mbox{$\overline{\tau}\big(b(y_j)\otimes 1\big)$} is of the form $\sum_m \sum_{k \neq l} a_{kl}e_{\phi(k,m)\phi(l,m)}$ where each $k$ corresponds to a unique $l$ and vice versa. Injectivity of $\phi$ implies that each $\phi(k,m)$ corresponds to a unique $\phi(l,m)$ and vice versa. Now take $h\in \NN$, we have
  \begin{align*}
    \overline{\tau}\big(b(y_j) & \otimes 1\big) V_j(x) \delta_h = \sum_{m \in \NN} \sum_{k \neq l} a_{kl}e_{\phi(k,m)\phi(l,m)}v_{(h,j)}(x) \\
    & = \sum_{m \in \NN} \sum_{k \neq l} a_{kl} e_{\phi(k,m)\phi(l,m)} \sum_{\fa \in \{1,2\}^j} c(x_{I_j})^{1/2} x_\fa \delta_{\phi(h, \varsigma(\fa))} \\
    & = \sum_{\fa \in \{1,2\}^j} a_{k_0 h} c(x_{I_j})^{1/2} x_\fa \delta_{\phi(k_0,\varsigma(\fa))} = a_{k_0h} v_{(k_0,j)}(x)
  \end{align*}
  where \(k_0 \in \NN\) is the necessarily unique index such that \(a_{k_0h} \neq 0\) (if such an index exists).
  Applying $V_j(x)^*$ now gives 
  \begin{equation*}
    V_j(x)^*\overline{\tau}\big(b(y_j)\otimes 1\big)V_j(x)\delta_h = V_j(x)^* a_{k_0h} v_{(k_0,j)}(x) = a_{k_0h} \delta_{k_0}
  \end{equation*}
  so each $\delta_h$ gets mapped to zero or to a multiple of $\delta_{k_0}$ for a unique index \(k_0\) depending only on \(h\) (and on \(j\)). This precisely means that if $V_j(x)^*\overline{\tau}\big(b(y_j)\otimes 1\big)V_j(x)=\sum b_{st}e_{st}$, and $b_{st}\neq 0$ implies $b_{it}=0$ for all $i\neq s$. To prove that $b_{st}\neq 0$ also implies $b_{si}=0$ for all $i\neq t$, it suffices to prove that the adjoint sends each $\delta_h$ to zero or to a multiple of some $\delta_t$. One may readily show this by noting
  \begin{equation*}
    \big(V_j^*(x)\overline{\tau}\big(b(y_j)\otimes 1\big)V_j(x)\big)^*=V_j^*(x)\overline{\tau}\big(b(y_j)\otimes 1\big)^*V_j(x)
  \end{equation*}
  and following a very similar argument.

  In the general case where $b \notin \CM(A)$, as usual we have \(\overline{\varphi}(b)=\lim_{n\rightarrow\infty}\varphi(e_nb)\),
  where $e_n(x)=P_n$ is the projection onto the first $n$ basis vectors of $\ell^2(\NN)$. For each $x\in Z$, $(e_nb)(x)$ is of the form $\sum_{k\leq n}a_{kl}e_{kl}$, where each $k$ corresponds to a unique $l$ and vice versa. So $\varphi(e_nb)$ will be of the right form for all $n$. This will still be the case when we take the limit for $n\rightarrow \infty$, finishing the proof.
\end{proof}

\begin{theorem}\label[theorem]{theorem:regularity}
  $\widetilde{D}$ is regular in $D$.
\end{theorem}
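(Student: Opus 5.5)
By \cref{lemma:regularity1}, $D$ is generated as a \cstar{}algebra by the elements $\alpha^n(\mu_{\infty,0}(b))$ with $n \in \ZZ$ and $b \in N(\widetilde{A} \subseteq A)$. Since $N(\widetilde{D}\subseteq D)$ is closed under products and adjoints, it therefore suffices to show that each such element normalizes $\widetilde{D}$. Moreover, $\widetilde{D}$ is the closure of $\bigcup_m \widetilde{D}_m$, and by \cref{lemma:D_n tilde as sum} each $\widetilde{D}_m$ is the sum $\widetilde{A}_{-m}+\dots+\widetilde{A}_m$. It is therefore enough to check
\[
nd\,n^*,\; n^*d\,n \in \widetilde{D}
\]
for $n = \alpha^k(\mu_{\infty,0}(b))$ and $d \in \widetilde{A}_i$, with $i,k \in \ZZ$ arbitrary. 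Applying the automorphism $\alpha^{-\min\{i,k\}}$, which preserves $D$ and $\widetilde{D}$, I reduce to the case where both indices are nonnegative. I then lift everything to a single copy of $\CM(A)$ via $\mu_{\infty,N}$ for $N$ large. Concretely, $n = \mu_{\infty,N}(\overline{\varphi}^{r}(b))$ and $d = \mu_{\infty,N}(\overline{\varphi}^{s}(c))$ for suitable $r,s \geq 0$ and $c \in \widetilde{A}$, using $\alpha^{-1}\mu_{\infty,m} = \mu_{\infty,m+1}$.

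The computation then happens in $\CM(A)$, where I consider $\overline{\varphi}^r(b)\,\overline{\varphi}^s(c)\,\overline{\varphi}^r(b)^*$ and the analogous product with the adjoints swapped. By \cref{lemma:diagonality preserved}, $\overline{\varphi}^s(c)$ is diagonal at every point $x\in Z$. By \cref{lemma:regularity2}, $\overline{\varphi}^r(b)(x)$ is a ``partial permutation times scalars'' matrix: each row and each column has at most one non-zero entry. Conjugating a diagonal operator by such a matrix yields a diagonal operator pointwise, so the product lies in $\ell^\infty(Z,\ell^\infty(\NN))\cap\CM(A) = \diagelms$.

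The product is also in the image of the relevant $A_j$ (or a finite sum of them), by \cref{prop:properties rordam}~\cref{prop:properties rordam:iv}. Indeed, $ndn^* \in A_kA_iA_k \subseteq A_{\min\{i,k\}}$, so $ndn^* = \alpha^{j}(\mu_{\infty,0}(e))$ for some $e \in A$, where $j=\min\{i,k\}$. It remains to see that $e$ is diagonal, i.e.\ $e \in \widetilde{A}$. Writing $ndn^* = \mu_{\infty,N}(\overline{\varphi}^{t}(e))$, the previous paragraph shows that $\overline{\varphi}^t(e)$ is diagonal. Then
\[
\overline{\varphi}^t(e - \overline{E}(e)) = \overline{\varphi}^t(e)-\overline{E}(\overline{\varphi}^t(e)) = 0
\]
by \cref{cor:E commutes with phi}, and injectivity of $\overline{\varphi}$ (\cref{prop:properties rordam}~\cref{prop:properties rordam:i}) gives $e = \overline{E}(e) \in C(Z,\CK)\cap \diagelms = \widetilde{A}$. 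Hence $ndn^* \in \widetilde{A}_j \subseteq \widetilde{D}$, and symmetrically $n^*dn\in\widetilde{D}$.

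The main obstacle I expect is the bookkeeping in the reduction to a common copy of $\CM(A)$. One must check that the pointwise-diagonal conclusion genuinely transfers through $\mu_{\infty,N}$, which needs injectivity (\cref{prop:properties rordam}~\cref{prop:properties rordam:ii}). One must also identify that the product lands in a single $A_j$, rather than only in $D$, so that the argument via \cref{cor:E commutes with phi} applies. A secondary subtlety is that \cref{lemma:regularity2} only describes the pointwise shape of $\overline{\varphi}^r(b)$. I therefore need the elementary fact that $u\,\mathrm{diag}\,u^*$ is diagonal whenever $u$ has at most one non-zero entry in each row and column, including for infinite matrices, where it should be verified on basis vectors.
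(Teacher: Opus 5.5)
Your proof is correct and is essentially the paper's argument. It uses the same reduction via \cref{lemma:regularity1}, the same lift to a single copy of $\CM(A)$, and the same pointwise diagonality from \cref{lemma:diagonality preserved} and \cref{lemma:regularity2}, and your shift by $\alpha^{-\min\{i,k\}}$ plays the role of the paper's factoring out of the smaller power of $\overline{\varphi}$ in its three cases. Indeed, after the shift one exponent is zero, so with $N=0$ the lifted product already lies in $A$ by the ideal property and is pointwise diagonal, hence lies directly in $\widetilde{A}$; your detour through \cref{prop:properties rordam}~\cref{prop:properties rordam:iv}, \cref{cor:E commutes with phi} and injectivity of $\overline{\varphi}$ is valid but not needed.
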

\begin{proof}
  By \Cref{lemma:regularity1} it suffices to prove that $\alpha^n\big(\mu_{\infty,0}(b)\big)$ belongs to $N(\widetilde{D}\subseteq D)$ for all integers $n$ whenever $b$ belongs to $N(\widetilde{A}\subseteq A)$. Take $b\in N(\widetilde{A}\subseteq A)$ and $f\in \widetilde{A}$, we want to prove that
  \begin{equation}\label{eq:regularity1}
    \alpha^n\big(\mu_{\infty,0}(b)\big)\alpha^m\big(\mu_{\infty,0}(f)\big)\alpha^n\big(\mu_{\infty,0}(b^*)\big)\in \widetilde{D}
  \end{equation}
  for all $m,n$. Using the fact that $\widetilde{D}_n=\widetilde{A}_{-n}+\widetilde{A}_{-n+1}+\dots+\widetilde{A}_n$ from \Cref{lemma:D_n tilde as sum}, this implies that
  \(
    \alpha^n\big(\mu_{\infty,0}(b)\big)c\alpha^n\big(\mu_{\infty,0}(b^*)\big)\in \widetilde{D}
  \)
  for all $c\in \widetilde{D}$, since $c$ is a limit of elements in $\widetilde{D}_n$.

  In order to prove \eqref{eq:regularity1} it suffices to prove that 
  \begin{equation}\label{eq:regularity2}
    \mu_{\infty,n}(\overline{\varphi}^k(b))\mu_{\infty,m}(\overline{\varphi}^l(f))\mu_{\infty,n}(\overline{\varphi}^k(b^*))\in \widetilde{D}
  \end{equation}
  for all $n,m,l,k\geq 0$. The expression \eqref{eq:regularity2} can be rewritten as
  \begin{align*}
    \mu_{\infty,n+m}\big(\mu_{n+m,n}(\overline{\varphi}^k(b))\mu_{m+n,m}(\overline{\varphi}^l(f))\mu_{m+n,n}(\overline{\varphi}^k(b^*))\big) = 
    \mu_{\infty,n+m}\big(\overline{\varphi}^{k+m}(b)\overline{\varphi}^{l+n}(f)\overline{\varphi}^{k+m}(b^*)\big).
  \end{align*}
  Whence it suffices to look at $\overline{\varphi}^{k+m}(b)\overline{\varphi}^{l+n}(f)\overline{\varphi}^{k+m}(b^*)$. First suppose $k+m<l+n$, now 
  \begin{equation*}
    \overline{\varphi}^{k+m}(b)\overline{\varphi}^{l+n}(f)\overline{\varphi}^{k+m}(b^*)=\overline{\varphi}^{k+m}\big(b\overline{\varphi}^{l+n-(k+m)}(f)b^*\big)\eqqcolon\overline{\varphi}^{k+m}(g).
  \end{equation*}
  Since the compact operators form a two-sided ideal and $b(x)$ is compact for all $x\in Z$, $g(x)$ will be compact for all $x\in Z$. Furthermore, $\overline{\varphi}^{l+n-(k+m)}(f)(x)$ is diagonal for all $x$, so $g(x)$ will still be diagonal because of the form of $b(x)$. This implies that $g\in C(Z,c_0(\NN))$, so
  \begin{align*}
    \mu_{\infty,n}(\overline{\varphi}^k(b))\mu_{\infty,m}(\overline{\varphi}^l(f))\mu_{\infty,n}(\overline{\varphi}^k(b^*))&=\mu_{\infty,n+m}\big(\overline{\varphi}^{k+m}(g)\big)=\mu_{\infty,n}(\overline{\varphi}^k(g))\in \widetilde{D}.
  \end{align*}
  On the other hand, suppose $k+m=l+n$. In this case:
  \begin{align*}
    \mu_{\infty,n}(\overline{\varphi}^k(b))\mu_{\infty,m}(\overline{\varphi}^l(f))\mu_{\infty,n}(\overline{\varphi}^k(b^*))&=\mu_{\infty,n}(\overline{\varphi}^k(bfb^*))\in \widetilde{D}
  \end{align*}
  since $bfb^*\in C(Z,c_0(\NN))$. Lastly, whenever $k+m>l+n$ one may follow a similar argument and finish the proof.
\end{proof}

Lastly, we now construct a faithful conditional expectation $E_n \colon D_n \rightarrow \widetilde{D}_n$. For this, we start from the faithful conditional expectation $E' \colon A\rightarrow \widetilde{A}$ given by restricting the map $\overline{E} \colon \CM(A)\rightarrow \diagelms$ from \Cref{def:e-bar}, so $E'$ sends $f\in A$ to its diagonal part.
  
\begin{theorem}\label[theorem]{theorem:faithful conditional expectation}
  There exists a faithful conditional expectation $E_n \colon D_n\rightarrow \widetilde{D}_n$ for each $n\in\NN$.
\end{theorem}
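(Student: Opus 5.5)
The plan is to define \(E_n\) by transporting \(\overline{E}\) through the map \(\mu_{\infty,n}\), exactly as in the proof of \cref{theorem:masa}. Given \(d \in D_n\), write \(d = d_{-n} + \cdots + d_n\) with \(d_i = \alpha^i(\mu_{\infty,0}(b_i))\) and \(b_i \in A\), using \cref{prop:properties rordam}~\cref{prop:properties rordam:v}. As computed in the proof of \cref{theorem:masa}, this gives \(d = \mu_{\infty,n}(f_d)\), where
\[
    f_d \coloneqq b_{-n} + \overline{\varphi}(b_{-n+1}) + \cdots + \overline{\varphi}^{2n}(b_n) \in \CM(A).
\]
Since \(\mu_{\infty,n}\) is injective, \(f_d\) depends only on \(d\). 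In fact \(D_n = \mu_{\infty,n}(R_n)\), where \(R_n \coloneqq A + \overline{\varphi}(A) + \cdots + \overline{\varphi}^{2n}(A) \subseteq \CM(A)\), and \(R_n\) is a \cstar{}subalgebra because \(\mu_{\infty,n}\) is an injective \Star{}homomorphism and \(D_n\) is a \cstar{}algebra. We then set \(E_n(d) \coloneqq \mu_{\infty,n}(\overline{E}(f_d))\), that is, \(E_n = \mu_{\infty,n} \circ \overline{E} \circ \mu_{\infty,n}^{-1}\) on \(D_n\).

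The key step is to check that \(E_n\) lands in \(\widetilde{D}_n\). By \cref{cor:E commutes with phi} and linearity,
\[
    \overline{E}(f_d) = \overline{E}(b_{-n}) + \overline{\varphi}(\overline{E}(b_{-n+1})) + \cdots + \overline{\varphi}^{2n}(\overline{E}(b_n)).
\]
Each \(\overline{E}(b_i) = E'(b_i)\) lies in \(\widetilde{A}\), since \(E'\colon A \to \widetilde{A}\) sends a function in \(C(Z,\CK)\) to its diagonal part, which lies in \(C(Z,c_0(\NN))\). Reversing the rewriting from the proof of \cref{theorem:masa} then gives
\[
    E_n(d) = \sum_{i=-n}^n \alpha^i\bigl(\mu_{\infty,0}(E'(b_i))\bigr) \in \widetilde{A}_{-n} + \cdots + \widetilde{A}_n = \widetilde{D}_n,
\]
where the last equality is \cref{lemma:D_n tilde as sum}. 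On \(\widetilde{D}_n\) the map \(E_n\) is the identity: if all \(b_i \in \widetilde{A}\) then \(E'(b_i) = b_i\), and by the uniqueness in \cref{prop:properties rordam} every element of \(\widetilde{D}_n\) has such a decomposition.

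The remaining properties then pass from \(\overline{E}\) to \(E_n\) along the \Star{}isomorphism \(\mu_{\infty,n}\colon R_n \to D_n\). Indeed, \(\overline{E}\) is a completely positive contraction on \(\CM(A)\), hence on \(R_n\), and it is faithful, so \(E_n\) is completely positive, contractive and faithful. Concretely, \(E_n(d^*d) = 0\) gives \(\overline{E}(f_d^* f_d) = 0\), hence \(f_d = 0\), hence \(d = 0\). The bimodule property \(E_n(cdc') = cE_n(d)c'\) for \(c, c' \in \widetilde{D}_n\) follows from \(\overline{E}(g h g') = g\overline{E}(h) g'\) for diagonal \(g, g'\), applied with \(g = f_c\) and \(g' = f_{c'}\), which are diagonal by the previous paragraph. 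This pointwise identity holds because \(P\colon \CB(\ell^2(\NN)) \to \ell^\infty(\NN)\) is an \(\ell^\infty(\NN)\)-bimodule map.

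I expect the main obstacle to be the bookkeeping showing that the choice of representation through \(\mu_{\infty,n}\) is consistent. This is handled by injectivity of \(\mu_{\infty,n}\) together with the uniqueness of the decomposition in \cref{prop:properties rordam}~\cref{prop:properties rordam:v}, and by \cref{cor:E commutes with phi}, which is what makes \(\overline{E}\) respect the summands. One should also note that \(\overline{E}\) on \(\CM(A)\) takes values in \(\diagelms\) by \cref{lemma:image-e-bar}, so all the products above stay in \(\CM(A)\).
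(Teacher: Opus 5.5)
Your proposal is correct and follows essentially the paper's proof. The paper defines \(E_n\) summand-wise through the transported expectations \(E'_i\), but then uses \cref{cor:E commutes with phi} to show exactly your identity \(E_n(\mu_{\infty,n}(f)) = \mu_{\infty,n}(\overline{E}(f))\), from which it gets contractivity and faithfulness, and it invokes Tomiyama's theorem \cite{Brown-Ozawa:Approximations}*{Theorem~1.5.10} where you instead verify complete positivity and the \(\widetilde{D}_n\)-bimodule property directly from \(\overline{E}\).
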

\begin{proof}
  
    Take $n\in\NN$. Using that $\alpha^n\circ\mu_{\infty,0}$ is a $^*$-isomorphism between $A$ and $A_n$, we can use the faithful conditional expectation $E' \colon A\rightarrow C(Z,c_0(\NN))$ to construct a faithful conditional expectation $E'_n \colon A_n\rightarrow \widetilde{A}_n$ by decreeing
  \begin{equation*}
    E'_n(\alpha^n(\mu_{\infty,0}(f)) \coloneqq \alpha^n(\mu_{\infty,0}(E'(f)))
  \end{equation*}
    for all $f\in A$. For sums, and since $D_n=A_{-n}+\dots+A_n$ by \Cref{prop:properties rordam}, one can define
  \begin{equation*}
    E_n \colon D_n\rightarrow \widetilde{D}_n:a=a_{-n}+\dots +a_n\mapsto E'_{-n}(a_{-n})+\dots+E'_n(a_n).
  \end{equation*}
  This is well-defined, for every element in $D_n$ can be written as a sum of elements of $A_{-n},\dots,A_n$ in a unique way.
  Since each $E'_i$ is a projection onto $\widetilde{A}_i$, $E_n$ is a projection onto $\widetilde{D}_n$. We will now prove that $E_n$ is contractive. Take an element $a\in D_n$, it is of the form
  \begin{equation*}
    a=\mu_{\infty,n}\big(b_{-n}+\overline{\varphi}(b_{-n+1})+\dots+\overline{\varphi}^{2n}(b_n)\big)\eqqcolon\mu_{\infty,n}(f).
  \end{equation*}
  Using \Cref{cor:E commutes with phi}, we find
  \begin{align}\label{eq:E_n(a)}
    E_n(a)&=\mu_{\infty,n}\big(E'(b_{-n})+\overline{\varphi}(E'(b_{-n+1}))+\dots+\overline{\varphi}^{2n}(E'(b_n))\big)\notag\\
    &=\mu_{\infty,n}\big(\overline{E}\big(b_{-n}+\overline{\varphi}(b_{-n+1})+\dots+\overline{\varphi}^{2n}(b_n)\big)\big)=\mu_{\infty,n}\big(\overline{E}(f)\big).
  \end{align}
  Taking the diagonal part cannot increase the norm, so $E_n$ is contractive since $\mu_{\infty,n}$ is an isometry. The map $E_n$ is a contractive projection, so it is a conditional expectation by \cite{Brown-Ozawa:Approximations}*{Theorem~1.5.10}. In order to show it is faithful, take $a\in D_n$ with $E_n(a^*a)=0$. We have $a=\mu_{\infty,n}(f)$ for some $f\in \CM(A)$, so $a^*a=\mu_{\infty,n}(f^*f)$. Analogous to \eqref{eq:E_n(a)} one finds \(0=E_n(a^*a)=\mu_{\infty,n}\big(\overline{E}(f^*f)\big)\).
  Since $\mu_{\infty,n}$ is injective by \Cref{prop:properties rordam} this implies $\overline{E}(f^*f)=0$. Now faithfulness of $\overline{E}$ implies $f=0$ and thus $a=0$.
\end{proof}

\begin{remark}
  If we denote by $\iota_n \colon D_n\rightarrow D_{n+1}$ the inclusion, it follows from the construction of $E_n$ that $\iota_n\circ E_n=E_{n+1}\circ \iota_n$ for all $n \in \NN$.
\end{remark}

\begin{corollary} \label[corollary]{cor:d-n-cartan}
  $\widetilde{D}_n$ is a Cartan subalgebra of $D_n$.
\end{corollary}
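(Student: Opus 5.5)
This is essentially an assembly of the three preceding results, checked against \cref{def:Cartan subalgebra}. We need that \(\widetilde{D}_n\) is a \cstar{}subalgebra of \(D_n\) which is maximally abelian, regular, and admits a faithful conditional expectation. Maximal abelianness is \cref{theorem:masa}, and the faithful conditional expectation \(E_n \colon D_n \to \widetilde{D}_n\) is \cref{theorem:faithful conditional expectation}. Abelianness of \(\widetilde{D}_n\) itself is implicit in \cref{theorem:masa}; it also follows directly. Indeed, every element of \(\widetilde{D}_n\) is of the form \(\mu_{\infty,n}(f)\) with \(f \in \diagelms\), by \cref{lemma:D_n tilde as sum} and \cref{lemma:diagonality preserved}, and diagonal elements commute.

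What remains is regularity of \(\widetilde{D}_n\) in \(D_n\). \cref{theorem:regularity} only treats the pair \(\widetilde{D} \subseteq D\), so I will rerun its argument at level \(n\). First, I repeat the proof of \cref{lemma:regularity1} verbatim. Starting from \(d \in D_n = A_{-n} + \dots + A_n\) (by \cref{prop:properties rordam}), approximate each summand by sums of \(\alpha^i(\mu_{\infty,0}(b))\) with \(b \in N(\widetilde{A} \subseteq A)\) and \(\abs{i} \leq n\). This shows that \(D_n\) is the closed span of such elements.

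Then, for such \(b\) and \(\abs{i} \leq n\), I need \(\alpha^i(\mu_{\infty,0}(b))\) to normalize \(\widetilde{D}_n\). By \cref{lemma:D_n tilde as sum} it suffices to conjugate elements \(\alpha^m(\mu_{\infty,0}(f))\) with \(f \in \widetilde{A}\) and \(\abs{m} \leq n\). The computation in the proof of \cref{theorem:regularity} shows the conjugate lies in \(\alpha^{\min\{i,m\}}(\mu_{\infty,0}(\widetilde{A}))\). In the case \(k+m<l+n\), for instance, the conjugate equals \(\mu_{\infty,\cdot}(\overline{\varphi}^{\cdot}(g))\) with \(g = b\,\overline{\varphi}^{\cdot}(f)\,b^* \in C(Z, c_0(\NN))\), using \cref{lemma:diagonality preserved} and the form of \(b(x)\). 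The index \(\min\{i,m\}\) lies in \([-n,n]\), so the conjugate lies in \(\widetilde{A}_{\min\{i,m\}} \subseteq \widetilde{D}_n\). The same holds with \(b^*\) in place of \(b\), since \(b^*\) is also a normalizer. Since \(N(\widetilde{D}_n \subseteq D_n)\) is closed under products and these elements span \(D_n\) densely, regularity follows.

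The only delicate point is this bookkeeping of indices in the regularity step. One must check that the conjugates land in some \(\widetilde{A}_j\) with \(\abs{j} \leq n\), not merely in \(\widetilde{D}\). This is guaranteed by the \(\min\) in \cref{prop:properties rordam}~\cref{prop:properties rordam:iv}, applied to the diagonal subalgebras as in the proof of \cref{lemma:D_n tilde as sum}. Combining the three properties with \cref{def:Cartan subalgebra} gives the corollary.
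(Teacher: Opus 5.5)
Your proposal is correct and follows the same route as the paper. The paper gives no separate proof of this corollary and intends it as the combination of \cref{theorem:masa}, \cref{theorem:faithful conditional expectation} and the regularity argument of \cref{lemma:regularity1,lemma:regularity2,theorem:regularity} run at level \(n\); your check that each conjugate lands in \(\widetilde{A}_{\min\{i,m\}} \subseteq \widetilde{D}_n\) is exactly the bookkeeping the paper leaves tacit, though note that the case \(i > m\) needs \cref{lemma:regularity2} (the partial-permutation form of \(\overline{\varphi}^{\,i-m}(b)(x)\)), not only the form of \(b(x)\).
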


To prove that $\widetilde{D}$ is also a Cartan subalgebra of $D$ we use the following result.

\begin{theorem}[{See \cite{barlak_cartan_2020}*{Theorem~3.6}, \cite{li-raad-2023}*{Theorem~2.5}}]\label[theorem]{theorem:inductive limit of cartans}
  Let $(A_n,B_n)$ be Cartan pairs with normalizers $N_n\coloneqq N(B_n\subseteq A_n)$ and faithful conditional expectations $P_n \colon A_n\rightarrow B_n$. Let $\varphi_n \colon A_n\rightarrow A_{n+1}$ be injective $^*$-homomorphisms with $\varphi_n(B_n)\subseteq B_{n+1}$, $\varphi_n(N_n)\subseteq N_{n+1}$ and $P_{n+1}\circ\varphi_n=\varphi_n\circ P_n$ for all $n$. Then $\varinjlim(B_n,\varphi_n)$ is a Cartan subalgebra of $\varinjlim(A_n,\varphi_n)$.
\end{theorem}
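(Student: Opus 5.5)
The plan is to prove the inductive limit theorem directly, in the form in which it will be applied to the inclusions \(\widetilde{D}_n \subseteq D_n\) (all separable). Write \(A_\infty \coloneqq \varinjlim(A_n,\varphi_n)\) and \(B_\infty \coloneqq \varinjlim(B_n,\varphi_n)\), and identify each \(A_n\) with its (isometric, since \(\varphi_n\) is injective) image in \(A_\infty\). Then \(B_\infty = \overline{\bigcup_n B_n} \subseteq A_\infty\). The three properties of \cref{def:Cartan subalgebra} will be checked in the order expectation, regularity, maximal abelianness; the last is the real obstacle.

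\emph{Conditional expectation.} The compatibility \(P_{n+1}\circ\varphi_n=\varphi_n\circ P_n\) says the \(P_n\) glue to a map \(P_\infty \colon \bigcup_n A_n \to \bigcup_n B_n\). Each \(P_n\) is contractive, so \(P_\infty\) is contractive and extends to \(A_\infty\). Complete positivity, idempotence and the bimodule property all pass to the limit by density, so \(P_\infty\) is a conditional expectation onto \(B_\infty\). Faithfulness does \emph{not} pass to limits automatically, so I would postpone it.

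\emph{Regularity.} Since \(\varphi_n(N_n)\subseteq N_{n+1}\), an element \(m \in N_n\) normalizes every \(B_k\) with \(k \geq n\). Hence it normalizes \(\bigcup_k B_k\), and by continuity also \(B_\infty\). The union \(\bigcup_n N_n\) spans a dense subspace of \(\bigcup_n A_n\), so \(B_\infty\) is regular in \(A_\infty\).

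\emph{Main obstacle: maximal abelianness and faithfulness.} An element commuting with \(B_\infty\) need not be a limit of elements commuting with the \(B_n\), so a naive argument fails. I would instead pass to the groupoid side. By \cref{theorem:correspondence Renault} each pair is \((C^*_r(\CG_n,\Sigma_n), C_0(\CG_n^{(0)}))\). The hypotheses on normalizers and expectations should make \(\varphi_n\) induce continuous maps between Weyl groupoids: unit spaces map contravariantly, and open bisections (supports of normalizers) map to open bisections. The point is to build a limit twist \(\Sigma_\infty \to \CG_\infty\) with unit space \(\varprojlim \CG_n^{(0)}\), and then to show that the dense set of points with trivial isotropy survives.

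Rather than fully constructing \(\CG_\infty\), I would try Kumjian–Renault's \emph{unique extension on a dense set} criterion. It suffices to check two things. First, \(B_\infty\) contains an approximate unit for \(A_\infty\); this is automatic, since one exists in each \(B_n\). Second, there is a dense set of pure states on \(B_\infty\) with unique extension to \(A_\infty\), namely extensions of the form \(\omega\circ P_\infty\). For faithfulness of \(P_\infty\), I would show that the left kernel of \(P_\infty\) is an ideal that meets \(B_\infty\) trivially. Then, using the free points together with the density of \(\bigcup_n A_n\), I would argue that the ideal is zero; this uses \(P_n(m^*am)=m^*P_n(a)m\) for normalizers \(m\). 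Finally, masa-ness follows: any \(a\) in the relative commutant satisfies \(a - P_\infty(a) \in \ker P_\infty\) on the free points, hence \(a = P_\infty(a) \in B_\infty\).

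Of all these steps, I expect the verification that topological principality is preserved to be the hardest, since this is exactly where the cited references do their work.
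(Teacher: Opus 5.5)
The paper gives no proof of this statement; it quotes it from Barlak--Raum and Li--Raad, so your argument has to stand on its own. Your construction of $P_\infty$ and your regularity argument are fine.

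\textbf{Main gap.} The step that carries all the content is not proved. You reduce maximal abelianness (and, in your plan, also faithfulness) to the density of characters of $B_\infty$ that extend uniquely to $A_\infty$. You then give no argument for that density, saying only that this is where the references do their work.

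It is not automatic. A character $\omega$ of $B_\infty$ is a compatible sequence of characters $\omega|_{B_n}$. Knowing that each $(A_n,B_n)$ has a dense set $F_n$ of free points says nothing directly about $\widehat{B_\infty}$. The restriction maps $\pi_n\colon\widehat{B_\infty}\to\widehat{B_n}$ are continuous surjections that need not be open, so $\bigcap_{n\geq N}\pi_n^{-1}(F_n)$ has no reason to be dense.

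What is missing is a condition that does pull back along the $\pi_j$. One that works goes as follows.
\begin{itemize}
\item For $m\in N_k$, the partial homeomorphisms $\theta_m$ of $\widehat{B_j}$ ($j\geq k$) and of $\widehat{B_\infty}$ are intertwined by the $\pi_j$.
\item The $\pi_j$ are surjective: the dual of the injective map $\varphi_j|_{B_j}$ is proper with dense, hence full, image.
\item Basic open sets of $\widehat{B_\infty}$ have the form $\pi_j^{-1}(V)$.
\end{itemize}
Hence a point in the interior of $\operatorname{Fix}(\theta_m)\subseteq\widehat{B_\infty}$ has such a neighbourhood with $V$ inside the interior of the fixed points of $\theta_m$ on $\widehat{B_j}$. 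There the Cartan property of $(A_j,B_j)$ gives $mb=P_j(m)b$ for all $b\in B_j$ vanishing off $V$. It follows that any state extending $\omega$ agrees with $\omega\circ P_\infty$ at $m$, unless $\omega$ lies in the nowhere dense set $\operatorname{Fix}(\theta_m)\setminus\operatorname{int}\operatorname{Fix}(\theta_m)$. A Baire category argument over a countable subset of $\bigcup_k N_k$ with dense span then gives the dense set you need; this is where separability is used.

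\textbf{Faithfulness.} You do not need free points here. As planned, your argument also inherits the unproven density. Once $P_\infty(m^*am)=m^*P_\infty(a)m$ for $m\in\bigcup_k N_k$ shows that $L=\{a : P_\infty(a^*a)=0\}$ is a closed two-sided ideal, argue as follows. Faithfulness of each $P_n$ gives $L\cap A_n=0$. Every closed ideal $J$ of the inductive limit satisfies $J=\overline{\bigcup_n J\cap A_n}$, because $A_n/(J\cap A_n)\to A_\infty/J$ is injective and hence isometric. Therefore $L=0$.

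\textbf{Final masa step.} Your last sentence is vacuous as written: $a-P_\infty(a)\in\ker P_\infty$ holds for every $a$, and faithfulness concerns $P_\infty(c^*c)$, not $P_\infty(c)$. The missing observation is the following. Let $c=a-P_\infty(a)$ lie in the relative commutant, and let $\omega$ have unique extension $\psi=\omega\circ P_\infty$. If $\psi(c^*c)\neq 0$, the state $x\mapsto\psi(c^*xc)/\psi(c^*c)$ also extends $\omega$, since $c^*bc=c^*cb$. So it equals $\psi$. Because $\psi$ is pure, this forces $c$ to act on the GNS vector of $\psi$ by the scalar $\psi(c)=0$, a contradiction. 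Hence $\omega(P_\infty(c^*c))=0$ on a dense set of $\omega$, and $c=0$ by faithfulness.
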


\begin{corollary}\label[corollary]{cor:D Cartan}
  $\widetilde{D}$ is a Cartan subalgebra of $D$.
\end{corollary}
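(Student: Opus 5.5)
We apply \cref{theorem:inductive limit of cartans} to the inductive system given by the inclusions \(\iota_n \colon D_n \to D_{n+1}\), with the Cartan pairs \((D_n, \widetilde{D}_n)\) from \cref{cor:d-n-cartan} and the faithful conditional expectations \(E_n \colon D_n \to \widetilde{D}_n\) from \cref{theorem:faithful conditional expectation}. The inclusions are injective \(^*\)-homomorphisms. Since \(D = \overline{\bigcup_n D_n}\) and \(\widetilde{D} = \overline{\bigcup_n \widetilde{D}_n}\) are increasing unions inside \(B\), we get canonically \(\varinjlim(D_n, \iota_n) \cong D\), and this isomorphism carries \(\varinjlim(\widetilde{D}_n, \iota_n)\) onto \(\widetilde{D}\). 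Hence the conclusion of \cref{theorem:inductive limit of cartans} is exactly that \(\widetilde{D}\) is Cartan in \(D\). All algebras \(D_n\) are separable, being generated by finitely many copies of the separable algebra \(A\), so the separability issue raised in the remark before \cref{theorem:masa} does not arise.

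Three hypotheses remain to be checked. First, \(\iota_n(\widetilde{D}_n) \subseteq \widetilde{D}_{n+1}\) holds by definition, since \(\widetilde{D}_n\) is generated by \(\widetilde{A}_{-n}, \dots, \widetilde{A}_n\) and these are among the generators of \(\widetilde{D}_{n+1}\). Second, the compatibility \(E_{n+1} \circ \iota_n = \iota_n \circ E_n\) is the remark following \cref{theorem:faithful conditional expectation}. To justify it, write \(a \in D_n\) as \(a_{-n} + \dots + a_n\); this is also its unique decomposition in \(D_{n+1}\), with zero components in \(A_{\pm(n+1)}\), so both sides equal \(\sum_i E'_i(a_i)\).

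The remaining hypothesis, \(\iota_n(N(\widetilde{D}_n \subseteq D_n)) \subseteq N(\widetilde{D}_{n+1} \subseteq D_{n+1})\), is the one real obstacle, since a normalizer of the smaller masa need not a priori normalize the larger one. I would handle it by the computation in the proof of \cref{theorem:regularity}, run at the finite level. Write an element of \(D_{n+1}\) as \(\mu_{\infty,n+1}(f)\) with \(f \in \CM(A)\). A normalizer \(a\) of \(\widetilde{D}_n\) has \(a = \mu_{\infty,n}(f)\) with \(f = b_{-n} + \overline{\varphi}(b_{-n+1}) + \dots + \overline{\varphi}^{2n}(b_n)\), and the goal is to show \(a \widetilde{A}_{\pm(n+1)} a^* \subseteq \widetilde{D}_{n+1}\).

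For the \(+(n+1)\) side, \(\widetilde{A}_{n+1} \subseteq \widetilde{A}_n \cdot\) in the sense of \cref{prop:properties rordam}\cref{prop:properties rordam:iv} applies: products with \(\widetilde{A}_{n+1}\) land in lower-indexed pieces. Writing \(d \in \widetilde{A}_{n+1}\) as a limit of \(e\, d\, e\) with \(e\) an approximate unit from \(\widetilde{A}_n\), we get \(a d a^* = \lim (ae)\, d\, (ea^*)\). Here \(ae\) lies in \(D_n\), and \((ae)\, \widetilde{D}_n\, (ae)^* \subseteq \widetilde{D}_n\); I would use this to reduce to diagonal elements of \(\CM(A)\), combining \cref{lemma:diagonality preserved} and \cref{cor:E commutes with phi}. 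For the \(-(n+1)\) side, \(\widetilde{A}_{-n-1}\) absorbs products, and \(a d a^*\) lands in \(A_{-n-1}\). I would argue it is diagonal by applying \(\overline{E}\): the normalizer condition gives \(a^* \widetilde{D}_n a \subseteq \widetilde{D}_n\), hence \(f\) has the pointwise partial-isometry-like form of \cref{lemma:regularity2} on the relevant support, and such an \(f\) maps diagonals to diagonals.

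If this direct route proves too messy, my fallback is to bypass \(N_n\) and instead invoke the version of the inductive limit theorem in \cite{li-raad-2023} that only requires the union of the images of \emph{some} generating sets of normalizers to normalize. The sets \(\alpha^k\mu_{\infty,0}(N(\widetilde{A} \subseteq A))\) generate each \(D_n\) by \cref{lemma:regularity1}, and they are normalizers of every \(\widetilde{D}_m\) by \cref{theorem:regularity}, which makes that hypothesis immediate.
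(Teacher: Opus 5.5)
Your overall strategy is the paper's: apply \cref{theorem:inductive limit of cartans} to the inclusions \(\iota_n\colon D_n\to D_{n+1}\). Your checks of \(\iota_n(\widetilde{D}_n)\subseteq\widetilde{D}_{n+1}\) and \(E_{n+1}\circ\iota_n=\iota_n\circ E_n\) are fine. The paper only asserts that normalizers of \(\widetilde{D}_n\) go to normalizers of \(\widetilde{D}_{n+1}\), and you are right that this is the point needing an argument.

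However, your direct argument for the \(+(n+1)\) side rests on a false step. A nonzero \(d\in\widetilde{A}_{n+1}\) is never a norm limit of \(e_m d e_m\) with \(e_m\in\widetilde{A}_n\). These products lie in the closed subalgebra \(A_n\), while \(A_n\cap A_{n+1}=\{0\}\) by the uniqueness in \cref{prop:properties rordam}. At the level of \(\CM(A)\): \(u_m\overline{\varphi}(h)u_m\in A\) cannot converge in norm to \(\overline{\varphi}(h)\notin A\). The identity \(ada^*=\lim_m (ae_m)d(e_ma^*)\) is still true, but only because \(ae_m\to a\) in norm (as \(\overline{\varphi}^{j}(u_m)\to 1\) strictly for each \(j\)). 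Even then, invoking \((ae)\widetilde{D}_n(ae)^*\subseteq\widetilde{D}_n\) does not apply, since \(d\notin\widetilde{D}_n\). What works is to regroup as \(a(e_mde_m)a^*\) with \(e_mde_m\in\widetilde{A}_n\subseteq\widetilde{D}_n\), which gives \(ada^*\in\widetilde{D}_n\) directly.

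Your \(-(n+1)\) sketch has the right idea but skips two steps. Once these are supplied, they handle all of \(\widetilde{D}_{n+1}\) at once.

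\emph{Step 1.} Write \(a=\mu_{\infty,n}(f)\). For \(h\in\widetilde{A}\), the element \(a\,\mu_{\infty,n}(h)\,a^*=\mu_{\infty,n}(fhf^*)\) lies in \(A_{-n}\cap\widetilde{D}_n\), which equals \(\widetilde{A}_{-n}\) by uniqueness of decompositions. So \(fhf^*\in\widetilde{A}\), and likewise \(f^*hf\in\widetilde{A}\). Taking \(h=p_{kk}\) shows that every \(f(x)\) has at most one nonzero entry in each row and column.

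\emph{Step 2.} The argument of \cref{lemma:regularity2} transfers this form to \(F\coloneqq\overline{\varphi}(f)\). It is \(F\), not \(f\), that represents \(a=\mu_{\infty,n+1}(F)\) at level \(n+1\).

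With these in hand, every \(y\in\widetilde{D}_{n+1}\) is \(\mu_{\infty,n+1}(g)\) with \(g\) pointwise diagonal, by \cref{lemma:D_n tilde as sum} and \cref{lemma:diagonality preserved}. Then \(aya^*=\mu_{\infty,n+1}(FgF^*)\in D_{n+1}\) with \(FgF^*\) pointwise diagonal. Now \eqref{eq:E_n(a)} gives \(E_{n+1}(aya^*)=\mu_{\infty,n+1}(\overline{E}(FgF^*))=aya^*\), i.e.\ \(aya^*\in\widetilde{D}_{n+1}\), and the same holds for \(a^*ya\).

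Your fallback is also sound, provided the inductive-limit theorem you invoke really is phrased with generating subsets \(N_n\subseteq N(\widetilde{D}_n\subseteq D_n)\) satisfying \(\iota_n(N_n)\subseteq N_{n+1}\). Take \(N_n=\{\alpha^k(\mu_{\infty,0}(b)) : |k|\le n,\ b\in N(\widetilde{A}\subseteq A)\}\). This set generates \(D_n\) by the proof of \cref{lemma:regularity1}. The computation in the proof of \cref{theorem:regularity} puts \(\alpha^k(\mu_{\infty,0}(b))\,\widetilde{A}_l\,\alpha^k(\mu_{\infty,0}(b))^*\) inside the single piece \(\widetilde{A}_{\min\{k,l\}}\), hence inside \(\widetilde{D}_n\) when \(|k|,|l|\le n\).
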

\begin{proof}
  We have $D=\varinjlim(D_n,\iota_n)$ and $\widetilde{D}=\varinjlim(\widetilde{D}_n,\iota_n)$ where $\iota_n \colon D_n\rightarrow D_{n+1}$ is the inclusion. We know $(D_n,\widetilde{D}_n)$ is a Cartan pair for all $n$. Furthermore, normalizers of $\widetilde{D}_n$ are mapped to normalizers of $\widetilde{D}_{n+1}$ and $\iota_n\circ E_n=E_{n+1}\circ\iota_n$ for all $n$, so it follows from \Cref{theorem:inductive limit of cartans} that $(D,\widetilde{D})$ is a Cartan pair too.
\end{proof}

By Renault's correspondence in \Cref{theorem:correspondence Renault} and \cref{cor:D Cartan} there are a twisted groupoid $(\CG,\Sigma)$ and an isomorphism $\psi \colon D\rightarrow C^*_r(\CG,\Sigma)$ carrying $\widetilde{D}$ to $C_0(\CGo)$.
We will see in the next chapter how this gives a groupoid model for R\o rdam's algebra $D\rtimes_\alpha\ZZ$. We also know that $(D_n,\widetilde{D}_n)$ is a Cartan pair for all $n\in \NN$, so each $D_n$ also has a twisted groupoid model, which we denote by $C^*_r(\CG_n,\Sigma_n)$.

\section{Further structure}\label[section]{sec:further structure}

This section describes the structure of the Cartan pair $\widetilde{D} \subseteq D$.
We are mostly interested in the unique extension property and the ``untwistedness'' of the groupoid model derived from Renault's correspondence, see \cref{theorem:correspondence Renault}.
This will yield, in turn, a particularly well-behaved model for \(D \rtimes_\alpha \ZZ\).
To conclude, we also examine the nonzero finite projection $Q$ and the infinite projection $P$ in R\o rdam's algebra \(D \rtimes_\alpha \ZZ\).
  
\subsection{Unique extension property and \texorpdfstring{\cstar{}}{C*-}diagonality}\label[subsection]{subsec:unique extension}

We start by proving that the pair $\widetilde{D}_n \subseteq D_n$ has the unique extension property, meaning that each pure state on $\widetilde{D}_n$ has a unique extension to a pure state on $D_n$.
By \cref{cor:d-n-cartan} and the considerations in \cite{li-raad-2023} this implies that the limit $\widetilde{D}$ is also a $C^*$-diagonal.

We start by describing the pure states on $\widetilde{D}_n$, \emph{i.e.}\ its characters.
Notice that a character $\phi \colon \widetilde{A} = C(Z,c_0(\NN)) \to \CC$ is of the form \(\phi(a)=\langle a(x)\delta_k,\delta_k\rangle\) for some $ x\in Z$ and $k\in \NN$,.
Intending to simplify notation, we henceforth denote this character by $\widetilde{\phi}_{x,k}$.
The same formula defines a map $\phi_{x,k} \colon \diagelms\rightarrow\CC$:
\begin{equation}\label{eq:phi_x,k}
  \phi_{x,k}(b)\coloneqq \langle b(x)\delta_k,\delta_k\rangle.
\end{equation}
Since \mbox{$\widetilde{A}_n=\alpha^n\big(\mu_{\infty,0}(\widetilde{A})\big)\cong \widetilde{A}$}, we may construct a character $\phi^{(n)}_{x,k} \colon \widetilde{A}_n\rightarrow \CC$ by defining
\begin{equation*}
    \phi^{(n)}_{x,k}\big(\alpha^n\big(\mu_{\infty,0}(a)\big)\big) \coloneqq \langle a(x)\delta_k,\delta_k\rangle
\end{equation*}
for some $x\in Z$ and $k\in \NN$. As it turns out, these are all the characters of \(\widetilde{D}_n\).

\begin{lemma}\label[lemma]{lemma:character space}
  The character space of $\widetilde{D}_n$ is isomorphic to $(Z\times\NN)\times\{-n,-n+1,\dots,n\}$ for all $n\in \NN$.
\end{lemma}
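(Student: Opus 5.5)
The plan is to use the ideal structure of \(\widetilde{D}_n\) that comes from \cref{lemma:D_n tilde as sum}. We have \(\widetilde{D}_n=\widetilde{A}_{-n}+\cdots+\widetilde{A}_n\) with \(\widetilde{A}_i\widetilde{A}_j\subseteq\widetilde{A}_{\min\{i,j\}}\). Hence for each \(m\) the space \(J_m\coloneqq\widetilde{A}_{-n}+\cdots+\widetilde{A}_m\) is a two-sided ideal of the commutative algebra \(\widetilde{D}_n\). Moreover, uniqueness of the decomposition identifies \(J_m/J_{m-1}\) with \(\widetilde{A}_m\cong\widetilde{A}=C(Z,c_0(\NN))\), whose character space is \(Z\times\NN\) via \(\widetilde{\phi}_{x,k}\). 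The expected outcome is \(\widehat{\widetilde{D}_n}=\bigsqcup_{m=-n}^n \widehat{J_m/J_{m-1}}\cong (Z\times\NN)\times\{-n,\dots,n\}\). I will make this explicit by hand rather than quoting the general fact about spectra of ideals and quotients.

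\emph{Step 1: from a character to a triple.} Let \(\chi\) be a character of \(\widetilde{D}_n\). Let \(m\) be the smallest index with \(\chi|_{\widetilde{A}_m}\neq0\); it exists because \(\chi\neq0\). Since \(\widetilde{A}_m\) is a subalgebra, \(\chi|_{\widetilde{A}_m}\) is a character, hence equal to \(\phi^{(m)}_{x,k}\) for a unique \((x,k)\in Z\times\NN\). By minimality, \(\chi\) vanishes on \(\widetilde{A}_i\) for \(i<m\). For \(i>m\), pick \(c\in\widetilde{A}_m\) with \(\chi(c)\neq0\). For \(a\in\widetilde{A}_i\) we have \(ac\in\widetilde{A}_m\), so \(\chi(a)=\phi^{(m)}_{x,k}(ac)/\phi^{(m)}_{x,k}(c)\) is determined by \((x,k,m)\). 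This gives an injective assignment \(\chi\mapsto((x,k),m)\).

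\emph{Step 2: every triple occurs.} Fix \((x,k,m)\). Write \(a=\sum_i a_i\) with \(a_i=\alpha^i(\mu_{\infty,0}(b_i))\) and \(b_i\in\widetilde{A}\). Define
\[
\rho_m(a)\coloneqq\sum_{i\geq m}\overline{\varphi}^{\,i-m}(b_i)\in\diagelms ,
\]
which is diagonal by \cref{lemma:diagonality preserved}, and set \(\chi(a)\coloneqq\phi_{x,k}(\rho_m(a))\) using \eqref{eq:phi_x,k}. This is well defined by the uniqueness in \cref{lemma:D_n tilde as sum}.

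The key check, and what I expect to be the main obstacle, is that \(\rho_m\) is multiplicative. I would compute products exactly as in the proof of \cref{theorem:regularity}. Push everything into a common level via \(\mu_{\infty,N}\); then \(a_ia_j\) corresponds to \(\overline{\varphi}^{\,s}(b_i)\overline{\varphi}^{\,t}(b_j)\). This product lies in \(\widetilde{A}_{\min\{i,j\}}\) with representative \(b\,\overline{\varphi}^{\,|i-j|}(b')\), where \(b\) is the element with the smaller index. Applying \(\overline{\varphi}^{\,\min\{i,j\}-m}\) then shows \(\rho_m(a_ia_j)=\rho_m(a_i)\rho_m(a_j)\) when both \(i,j\geq m\). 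If \(\min\{i,j\}<m\), both sides vanish.

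So \(\rho_m\) is a \(^*\)-homomorphism killing \(J_{m-1}\), and \(\phi_{x,k}\) is a character on the commutative algebra \(\diagelms\). Hence \(\chi=\phi_{x,k}\circ\rho_m\) is a character. On \(\widetilde{A}_m\) it restricts to \(\phi^{(m)}_{x,k}\), which is nonzero, and it vanishes on \(J_{m-1}\). Therefore Step 1 applied to \(\chi\) returns \((x,k,m)\), and the two assignments are mutually inverse. This yields the bijection \(\widehat{\widetilde{D}_n}\cong(Z\times\NN)\times\{-n,\dots,n\}\). If a topological statement is intended, I would additionally note that each stratum \(\widehat{J_m}\setminus\widehat{J_{m-1}}\) carries the weak\(^*\) topology of \(\widehat{\widetilde{A}_m}\cong Z\times\NN\).
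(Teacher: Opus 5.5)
Your proposal is correct and follows essentially the paper's route. Both arguments sort characters by the smallest index \(m\) with \(\chi|_{\widetilde{A}_m}\neq 0\) and show that the restriction \(\phi^{(m)}_{x,k}\) determines \(\chi\); conversely, both build a character for each triple by checking multiplicativity on pairs of components via \(\widetilde{A}_i\widetilde{A}_j\subseteq\widetilde{A}_{\min\{i,j\}}\). The differences are minor: you get uniqueness from the division trick \(\chi(a)=\chi(ac)/\chi(c)\) instead of the paper's level-by-level identification through the Gelfand dual of \(\overline{\varphi}|_{\widetilde{A}}\) (which tacitly needs \(\phi_{x,k}\circ\overline{\varphi}\) to be non-zero on \(\widetilde{A}\)), and your \(\phi_{x,k}\circ\rho_m\) is exactly the closed formula \eqref{eq:phi_x,k,i} that the paper only records after the lemma.
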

\begin{proof}
    
  Take $n\in \NN$ and suppose $\phi$ is a character on $\widetilde{D}_n$. From \Cref{lemma:D_n tilde as sum}, we know that \(\widetilde{D}_n=\widetilde{A}_{-n}+\dots+\widetilde{A}_n\).
  Define $\phi_i\coloneqq\phi|_{\widetilde{A}_i}$ for $i=-n,\dots,n$. For each $i$, $\phi_i$ is a multiplicative linear functional $\phi_i \colon \widetilde{A}_i\rightarrow\CC$, so it is either zero or a character on $\widetilde{A}_i$. Take the smallest $i$ for which $\phi_i\neq 0$, we will prove that this $\phi_i$ determines all other $\phi_j$ with $j\geq i$. We have $\phi_i=\phi^{(i)}_{x,k}$ for some $x\in Z$ and $k\in \NN$.
  Moreover, let $\Hat{\overline{\varphi}} \colon Z \times \NN \to Z \times \NN$ be the Gelfand dual of $\overline{\varphi}|_{\widetilde{A}} \colon \widetilde{A} \to \overline{\varphi}(\widetilde{A})$ (recall that \(\widetilde{A} = C(Z, c_0(\NN))\) is commutative and \(\overline{\varphi}\) is an injective \(^*\)-homomorphism).

  Given two arbitrary elements $a,b\in C(Z,c_0(\NN))$, we have that
  \begin{align*}
    \phi\big(\alpha^{i}\big(\mu_{\infty,0}(a)\big) \cdot \alpha^{i+1}\big(\mu_{\infty,0}(b)\big)\big)&=\phi_i\big(\alpha^{i}\big(\mu_{\infty,0}(a \cdot \overline{\varphi}(b))\big)\big) = \langle a(x) \cdot \overline{\varphi}(b)(x)\delta_k,\delta_k\rangle\\
    & \stackrel{{\rm (\cref{lemma:diagonality preserved})}}{=} \langle a(x)\delta_k,\delta_k\rangle \cdot \langle \overline{\varphi}(b)(x)\delta_k,\delta_k\rangle\\
    &=\phi_i\big(\alpha^i\big(\mu_{\infty,0}(a)\big)\big) \cdot \big(\phi_{x,k}\circ \overline{\varphi}\big)(b) =\phi_i\big(\alpha^i\big(\mu_{\infty,0}(a)\big)\big) \cdot \Hat{\overline{\varphi}}(\phi_{x,k})(b),
  \end{align*}
  where in the third equality we also used $\phi_{x,k} \colon \diagelms \rightarrow \CC$ is multiplicative and $\phi_{x,k}|_{\overline{\varphi}(\widetilde{A})} \colon \overline{\varphi}(\widetilde{A}) \to \CC$ is a character.
  We have $\Hat{\overline{\varphi}}(\phi_{x,k})=\phi_{\beta_1(x,k),\beta_2(x,k)}$ for some $\beta_1(x,k)\in Z, \beta_2(x,k)\in \NN$, whence
  \begin{align*}
    \phi_i\big(\alpha^i\big(\mu_{\infty,0}(a)\big)\big) \cdot \Hat{\overline{\varphi}}(\phi_{x,k})(b)&=\phi_i\big(\alpha^i\big(\mu_{\infty,0}(a)\big)\big) \cdot \phi_{\beta_1(x,k),\beta_2(x,k)}(b)\\
    &=\phi_i\big(\alpha^i\big(\mu_{\infty,0}(a)\big)\big) \cdot \phi_{\beta_1(x,k),\beta_2(x,k)}^{(i+1)}\big(\alpha^{i+1}(\mu_{\infty,0}(b))\big).
  \end{align*}
  On the other hand, using multiplicativity of $\phi$ also yields
  \begin{align*}
    \phi\big(\alpha^{i}\big(\mu_{\infty,0}(a)\big) \cdot \alpha^{i+1}\big(\mu_{\infty,0}(b)\big)\big)&=\phi\big(\alpha^{i}\big(\mu_{\infty,0}(a)\big)\big) \cdot \phi\big(\alpha^{i+1}\big(\mu_{\infty,0}(b)\big)\big)\\
    &=\phi_i\big(\alpha^i\big(\mu_{\infty,0}(a)\big)\big) \cdot \phi_{i+1}\big(\alpha^{i+1}\big(\mu_{\infty,0}(b)\big)\big).
  \end{align*}
  Since this holds for all $b\in C(Z,c_0(\NN))$, it follows that $\phi_{i+1}=\phi_{\beta_1(x,k),\beta_2(x,k)}^{(i+1)}$.
  Iterating this procedure yields that $\phi_{i+1}$ determines $\phi_{i+2}$, and thus, in turn, \(\phi_j\) is determined for all \(j \geq i\).
  This precisely means a character $\phi \colon \widetilde{D}_n\rightarrow \CC$ is completely determined by the smallest $i$ for which $\phi_i\neq 0$ and the $x\in Z$ and $k\in \NN$ such that $\phi_i=\phi_{x,k}^{(i)}$. It follows that the character space of $\widetilde{D}_n$ is contained in $(Z\times \NN)\times \{-n,-n+1,\dots,n\}$. 
    
  We turn to prove that the character space is actually equal to $(Z\times\NN)\times\{-n,\dots,n\}$.
  Take $\big((x,k),i\big)\in (Z\times\NN)\times\{-n,-n+1,\dots,n\}$ and define $\phi \colon \widetilde{D}_n\rightarrow\CC$ by $\phi_j \coloneqq 0$ for $j<i$, $\phi_i \coloneqq \phi_{x,k}^{(i)}$ and $\phi_j$ for $j>i$ defined by $\phi_j \coloneqq \phi_{\beta_1^{j-i}(x,k),\beta_2^{j-i}(x,k)}^{(j)}$. 
  The map $\phi$ is nonzero, since $\phi_i$ is nonzero, and it it is linear since each $\phi_j$ is. Take $j\geq i$, so $\phi_j=\phi^{(j)}_{x',k'}$ for some $x'\in Z$ and $k'\in\NN$. By construction and a similar argument to the one in the previous paragraph, we have for all $l$ and all $a,b\in C(Z,c_0(\NN))$,
  \begin{align*}
        \phi_j\big(\alpha^j(\mu_{\infty,0}\big(a \cdot \overline{\varphi}^l(b))\big)\big)&=\langle a(x') \cdot \overline{\varphi}^l(b)(x')\delta_{k'},\delta_{k'}\rangle=\langle a(x')\delta_{k'},\delta_{k'}\rangle \cdot \langle\overline{\varphi}^l(b)(x')\delta_{k'},\delta_{k'}\rangle\\
    &=\phi_j\big(\alpha^j(\mu_{\infty,0}(a))\big) \cdot \phi_{j+l}\big(\alpha^{j+l}(\mu_{\infty,0}(b))\big).
  \end{align*}
  On the other hand, when $j<i$ we have $\phi_j=0$, and thus it is clear that
  \begin{equation*}
    0 = \phi_j\big(\alpha^j(\mu_{\infty,0}\big(a\overline{\varphi}^l(b))\big)\big)=\phi_j(\alpha^j(\mu_{\infty,0}(a)))\phi_{j+l}(\alpha^{j+l}(\mu_{\infty,0}(b))) = 0.
    \end{equation*}
    This implies that the map $\phi$ defined this way is multiplicative, and thus a character on $\widetilde{D}_n$.
\end{proof}

\begin{notation} \label[notation]{notation:phi-xki}
  Following the notation in the proof of \cref{lemma:character space}, the character $\phi \colon \widetilde{D}_n\rightarrow\CC$ with $\phi_j=0$ for $j<i$, $\phi_i=\phi_{x,k}^{(i)}$ and $\phi_j=\phi_{\beta_1^{j-i}(x,k),\beta_2^{j-i}(x,k)}^{(j)}$ for $j>i$ is henceforth denoted by $\phi_{x,k,i} \colon \widetilde{D}_n \to \CC$.
\end{notation}

\begin{proposition} \label[proposition]{lemma:diagonal}
  The Cartan pair $\widetilde{D} \subseteq D$ is a $C^*$-diagonal.
  Hence, its Weyl groupoid is principal.
\end{proposition}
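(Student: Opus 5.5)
The plan is to prove that each finite stage $\widetilde{D}_n \subseteq D_n$ is a \cstar{}diagonal, and then pass to the limit $\widetilde{D}\subseteq D$. The second clause of the statement will then follow from \cref{theorem:correspondence Renault}.

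For the finite stage, by \cref{cor:d-n-cartan} it remains to check the unique extension property. Any extension of a character to a state of $D_n$ agrees with $\phi\circ E_n$, and uniqueness of the extension is equivalent to this: whenever $\phi(E_n(a))=0$, we have $\psi(a)=0$ for every extension $\psi$. Since $\widetilde{D}_n$ is Cartan, a standard reduction (Kumjian's criterion) lets us test only normalizers. It suffices to show that for each normalizer $m$ with $E_n(m)=0$ and each character $\phi=\phi_{x,k,i}$ from \cref{notation:phi-xki}, there is $d\in\widetilde{D}_n$ with $\phi(d)=1$ and $dmd=0$. Equivalently, the Weyl groupoid of $(D_n,\widetilde{D}_n)$ has trivial isotropy.

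To exhibit such a $d$, I would use the description $D_n=\mu_{\infty,n}(\CM(A))\cap\cdots$ from the proofs of \cref{theorem:masa,theorem:faithful conditional expectation}. Every $a\in D_n$ is $\mu_{\infty,n}(f)$ with $f=b_{-n}+\overline{\varphi}(b_{-n+1})+\dots+\overline{\varphi}^{2n}(b_n)$. Under this identification, $\phi_{x,k,i}$ corresponds to evaluating the $(k,k)$-matrix entry of the appropriate summands at a point of $Z$, transported by the dual map $\Hat{\overline{\varphi}}$.

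By \cref{lemma:regularity2}, the images of normalizers are pointwise ``partial permutation'' matrices. So the question reduces to showing that, at each point, the off-diagonal part of $m$ does not map $\delta_{k'}$ back to a multiple of $\delta_{k'}$. Here $k'$ is the relevant index. For a single $A_i$ this is clear: a normalizer $b$ of $C(Z,c_0)$ with $E'(b)=0$ has $b(x)_{kk}=0$. One then takes $d$ to be the element $\alpha^i\mu_{\infty,0}(g)$, where $g$ is supported near $x$ in the $k$-th diagonal entry, with $g(x)=e_{kk}$. For this $d$ we get $dmd=\alpha^i\mu_{\infty,0}(g\,b\,g)$, and this vanishes when $g$ is supported near $(x,k)$, because $b$ is continuous and $b(x)_{kk}=0$. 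More precisely, one uses $|b(y)_{kk}|$ small near $x$ together with an approximation argument, or chooses $g=h\,e_{kk}$ and uses that $e_{kk}be_{kk}=b_{kk}e_{kk}$, where $b_{kk}$ is a continuous function vanishing at $x$.

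The main obstacle is the mixed terms. A normalizer of $\widetilde{D}_n$ is a sum over several $A_j$, and elements of $A_j$ with $j>i$ act on $\phi$ through the $\overline{\varphi}^{j-i}$-image. I would handle this using $A_mA_n\subseteq A_{\min}$ (\cref{prop:properties rordam}) and the facts that $\overline{\varphi}$ commutes with $\overline{E}$ and preserves the partial-permutation form (\cref{cor:E commutes with phi,lemma:regularity2}). These give that $e\,m\,e$, for $e$ a diagonal cut-down at the relevant coordinate, again lies in a single level with zero diagonal at the point. The estimate is then done levelwise, using that $\overline{\varphi}(b)(y)$ has zero $(k',k')$ entry wherever $b$ is off-diagonal.

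Finally, I would pass to the limit. Pure states of $\widetilde{D}$ restrict to characters of each $\widetilde{D}_n$, where extensions are unique. The equality $\psi=\phi\circ E$ on each $D_n$, together with density of $\bigcup D_n$ and continuity of states, gives uniqueness on $D$, as in \cite{li-raad-2023}. Principality of the Weyl groupoid then follows from \cref{theorem:correspondence Renault}.
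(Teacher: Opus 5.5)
Your global structure (finite stages, then \cite{li-raad-2023} for the limit, then Kumjian for principality) matches the paper's. The gap is at the finite stage: the criterion you reduce to is false here, and it fails precisely on the levels your ``mixed terms'' paragraph never mentions.

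You ask, for every normalizer $m$ with $E_n(m)=0$ and every $\phi_{x,k,i}$, for some $d\in\widetilde{D}_n$ with $\phi_{x,k,i}(d)=1$ and $dmd=0$ \emph{exactly}. This is strictly stronger than trivial isotropy, and it breaks for $m\in A_l$ with $l<i$. Indeed, for $e\in\widetilde{A}_i$ one has $eA_le\subseteq A_l$, not $A_i$, by $A_mA_n\subseteq A_{\min\{m,n\}}$. Moreover, the spectrum of $\widetilde{D}_n$ is not a disjoint union of levels.

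Here is a concrete failure. Let $i>-n$. Pick $j_1<j_2<\cdots$ such that $(c_{j,1},\dots,c_{j,r})$ is within $1/r$ of $(x_1,\dots,x_r)$ for $j\in\{j_{2r-1},j_{2r}\}$; this is possible by density of the $c_{j,i}$. Set $m\coloneqq\alpha^{i-1}(\mu_{\infty,0}(b))$ with $b\coloneqq\sum_r 2^{-r}e_{\sigma_{j_{2r}}(k)\sigma_{j_{2r-1}}(k)}$, constant on $Z$. Then $b\in N(\widetilde{A}\subseteq A)$ and $E_n(m)=0$.

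For diagonal $a$ and $j>0$, the $(\sigma_j(h),\sigma_j(h))$ entry of $\overline{\varphi}(a)(z)$ equals the $(h,h)$ entry of $a(c_{j,1},\dots,c_{j,j},z_{\nu(j,j+1)},\dots)$. Fix any $z\in Z$ and put $\phi_r\coloneqq\phi_{z,\sigma_{j_{2r-1}}(k),i-1}$, so that $\theta_m(\phi_r)=\phi_{z,\sigma_{j_{2r}}(k),i-1}$. Both sequences converge to $\phi_{x,k,i}$. They tend to $0$ on $\widetilde{A}_{i-1}$. On $\widetilde{A}_l$ for $l\geq i$, they evaluate the $(k,k)$ entry of $\overline{\varphi}^{l-i}(\cdot)$ at points tending to $x$. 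Meanwhile $\phi_r(m^*m)=4^{-r}$. So for every $d\in\widetilde{D}_n$ with $\phi_{x,k,i}(d)=1$,
\[
\phi_r\bigl((dmd)^*(dmd)\bigr)=\abs{\phi_r(d)}^2\,\abs{\theta_m(\phi_r)(d)}^2\,4^{-r}>0
\]
for large $r$, that is, $dmd\neq 0$.

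The missing idea is the one the paper uses on these levels. Since $\phi_{x,k,i}$ vanishes on $\widetilde{A}_l$ for $l<i$, any extension $\psi$ vanishes on $A_l$ by Cauchy--Schwarz against an approximate unit of $\widetilde{A}_l$. For a normalizer this is simply $\abs{\psi(m)}^2\leq\psi(m^*m)=\phi_{x,k,i}(m^*m)=0$.

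On levels $l\geq i$ your exact argument does work, and more simply than you suggest. Single-level normalizers $\alpha^l(\mu_{\infty,0}(b))$ span a dense subspace of $D_n=A_{-n}+\dots+A_n$, so mixed sums never need to be treated. Taking $d\coloneqq\alpha^i(\mu_{\infty,0}(p_{kk}))$ gives $dmd=\alpha^i\bigl(\mu_{\infty,0}(p_{kk}\overline{\varphi}^{l-i}(b)p_{kk})\bigr)=0$, because $\overline{\varphi}^{l-i}(b)$ has zero diagonal by \cref{cor:E commutes with phi}. Note that $E'(b)=0$ means $b_{kk}\equiv 0$ on all of $Z$, so no approximation is needed. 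The paper instead handles $l\geq i$ by observing that $\psi|_{A_l}$ is a state extending a character of the diagonal $\widetilde{A}_l\subseteq A_l$.

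In the limit step, a character of $\widetilde{D}$ may vanish on $\widetilde{D}_n$ for small $n$, so you should restrict only for $n$ large.
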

\begin{proof}
  By \cite{li-raad-2023}*{Theorem~2.5}, and using the fact that the pair \(\widetilde{D} \subseteq D\) is (by construction) the inductive limit of the pairs \(\widetilde{D}_n \subseteq D_n\), it suffices to prove the latter pair is a $C^*$-diagonal for all $n \in \NN$.

  Fix \(n \in \NN\), and let $\widetilde{\phi} \colon \widetilde{D}_n \to \CC$ be a pure state.
  We aim to show it extends uniquely to a (pure) state of \(D_n\).
  By \Cref{lemma:character space} $\widetilde{\phi}$ is of the form $\phi_{x,k,i}$ for some $x\in Z$, $k\in\NN$ and $i\in \{-n,\dots ,n\}$ (recall \cref{notation:phi-xki}).
  Let $\phi$ be an extension of $\widetilde{\phi}$ to a state on $D_n=A_{-n}+\dots +A_n$.
  Look at $\phi|_{A_l}$ for some $l\geq i$, so $\phi|_{\widetilde{A}_l}=\widetilde{\phi}|_{\widetilde{A}_l}$ is a character.
  Furthermore, $\phi|_{A_l}$ is still linear and positive and $\norm{\phi|_{A_l}}\leq1$.
  Since $\phi|_{\widetilde{A}_l}$ is a character and thus a pure state on $\widetilde{A}_l$, we also have $ \|\phi|_{\widetilde{A}_l}\|=1$, which implies that $\norm{\phi|_{A_l}}=1$, so $\phi|_{A_l}$ is a state. This state extends the pure state $\phi|_{\widetilde{A}_n}$, so it is uniquely determined since $\widetilde{A}_n$ is a $C^*$-diagonal of $A_n$.

  Now look at $\phi|_{A_l}$ with $l<i$, so $\phi|_{\widetilde{A}_l}=\widetilde{\phi}|_{\widetilde{A}_l}=0$.
  Let $\{e_m\}_{m \in \NN}\subset C(Z,c_0(\NN))$ be the usual approximate unit.
  Put $e_m^{(l)}=\alpha^l\big(\mu_{\infty,0}(e_m)\big)\in \widetilde{A}_l$, now $\{e_m^{(l)}\}_{m\in\NN}$ is an approximate unit of projections for $A_l$. Take $a\in A_l$. We have \(|\phi(e_m^{(l)}a)|^2 \leq \phi(e_m^{(l)})\phi(a^*a)=0\) for all $m$, where the last equality follows from $e_m^{(l)}\in \widetilde{A}_l$ and thus $\phi(e_m^{(l)})=0$. This implies \(\phi(a)=\lim_{m\rightarrow\infty}\phi(e_m^{(l)}a)=0\)
  for all $a\in A_l$. It follows that $\phi|_{A_l}$ is uniquely determined by $\widetilde{\phi}$ for all $l$, and since $D_n=A_{-n}+\dots+A_n$ by \Cref{prop:properties rordam}, this implies that $\phi$ is uniquely determined by $\widetilde{\phi}$. We have now proved that $\widetilde{D}_n$ has the unique extension property, so it is a $C^*$-diagonal. By \cite{li-raad-2023}*{Theorem~2.5}, this implies that the limit $\widetilde{D}$ is also a $C^*$-diagonal, which is what we wanted to prove.

  The ``hence'' statement follows from \cite{kumjian-diagonals-1986}*{Theorem~3.1}, see also \cref{theorem:correspondence Renault}.
\end{proof}
  
\subsection{Triviality of the twist}\label[subsection]{subsec:trivial twist}

We prove that the groupoid model we found for $\widetilde{D} \subseteq D$ in \Cref{sec:Cartan} is actually untwisted by proving the same for every pair $\widetilde{D}_n \subseteq D_n$: that is, we show that the extension \(\CGo_n \times \TT \to \Sigma_n \to \CG_n\) splits.
Recall this precisely means we have to construct a continuous section $s_n \colon \CG_n\rightarrow\Sigma_n$ that is also a groupoid homomorphism, see \cref{prop:section} below.
In order to do this, as usual, we can use that the orignal pair $\widetilde{A} \subseteq A$ has an untwisted groupoid model, and then propagate it.
On the one hand, we keep the same notation for the characters as we did in \Cref{subsec:unique extension}.
This notwithstanding, we need to introduce some extra notation, for we will need to explicitly construct the groupoids \(\CG_n\) and \(\Sigma_n\).

Given some $b\in \widetilde{D}_n$, by \Cref{lemma:D_n tilde as sum} there are some $b_j \in \widetilde{A}_j$ with
\begin{equation*}
  b=\alpha^{-n}\big(\mu_{\infty,0}(b_{-n})\big)+\dots + \alpha^i\big(\mu_{\infty,0}(b_i)\big)+\dots+\alpha^n\big(\mu_{\infty,0}(b_n)\big).
\end{equation*}
Given \(i \in \{-n, \dots, n\}\), we define \(B_i \coloneqq b_i+\dots+\overline{\varphi}^{n-i}(b_n)\).
The construction of the character $\phi_{x,k,i}$ gives
\begin{align}\label{eq:phi_x,k,i}
  \phi_{x,k,i}(b)&=\phi_{x,k,i}\big(\alpha^{-n}\big(\mu_{\infty,0}(b_{-n})\big)\big)+\dots + \phi_{x,k,i}\big(\alpha^i\big(\mu_{\infty,0}(b_i)\big)\big)+\dots+\phi_{x,k,i}\big(\alpha^n\big(\mu_{\infty,0}(b_n)\big)\big)\notag\\
  &=\phi_{x,k}(b_i)+\dots+\phi_{x,k}\big(\overline{\varphi}^{n-i}(b_n)\big)\notag\\
  &=\phi_{x,k}\big(b_i+\dots+\overline{\varphi}^{n-i}(b_n)\big)=\phi_{x,k}(B_i).
\end{align}
Note that every pure state on $\widetilde{D}_n$ is of the form $\phi_{x,k,i}$ for $x\in Z$, $k\in \NN$ and $i\in \{-n,\dots,n\}$.

\vspace{1mm}

We now recall the construction of the Weyl groupoid and twist of a Cartan pair \(B \subseteq A\). The (unique) polar decomposition (which exists in the von Neumann algebra \(A^{**}\)) shows that a normalizer \(m \in N(B \subseteq A)\) defines uniquely a partial homeomorphism \(\theta_m \colon D_{m^*m} \to D_{mm^*}\) between open subsets \(D_{m^*m}, D_{mm^*} \subseteq \widehat{B}\).
Notice we write \(D_{m^*m} \coloneqq \{\phi \in \widehat{B} : \phi(m^*m) > 0\}\) to indicate that it only depends on \(m^*m\), not on \(m\) itself.
In fact, see \cite{sims2018etalegroupoidscalgebras}*{Proposition 5.1.15} (and compare with \cite{kumjian-diagonals-1986}*{1.6} and \cite{Renault2008CartanSI}*{Proposition 4.7}), the defining property of \(\theta_m\) is
\begin{equation} \label{def-trait-theta}
  \theta_m\left(\phi\right)\left(mfm^*\right) = \phi\left(fm^*m\right) \; \text{ for all } f \in C_0\left(D_{m^*m}\right) \text{ and } \phi \in D_{m^*m}.
\end{equation} 
We briefly note in passing that we see \(C_0(D_{m^*m})\) as an ideal in \(C_0(\widehat{B}) \cong B\) in the usual way, that is, we extend a function \(f \in C_0(D_{m^*m})\) by \(0\) outside of the open (not necesssarily closed) set \(D_{m^*m}\).

It follows from the construction of Renault in \cite{Renault2008CartanSI} that the Weyl extension \(\CGo \times \TT \to \Sigma \to \CG\) can be constructed from the collection \(\{\theta_m\}_{m \in N(B \subseteq A)}\).
For instance, \(\Sigma\) is the set \(\{(m, \phi) : \phi \in D_{m^*m}\}\) quotiented out by the equivalence relation given by
\begin{align*}
  \left(m_1, \phi_1\right) \approx \left(m_2, \phi_2\right) \Leftrightarrow \phi_1 = \phi_2 & \text{ and there is some } f \in B \text{ such that } \\
  & m_1 f = m_2 f \text{ and } \phi_1\left(f\right) \neq 0. 
\end{align*}
Likewise, \(\CG\) is the same set \(\{(m, \phi) : \phi \in D_{m^*m}\}\) quotiented out by the (weaker) equivalence relation
\begin{align*}
  \left(m_1, \phi_1\right) \sim \left(m_2, \phi_2\right) \Leftrightarrow \phi_1 = \phi_2 & \text{ and there is some open } \CU \subseteq \widehat{B} \text{ such that } \\
  & \theta_{m_1}(\psi) = \theta_{m_2}(\psi) \text{ for all } \psi \in \CU \text{ and } \phi_1 \in \CU.
\end{align*}
Notice that the extension \(\CGo \times \TT \to \Sigma \to \CG\) is now given by the canonical inclusion and projection maps.
Henceforth, the \(\sim\) and \(\approx\) classes of \((m, \phi)\) will respectively be denoted by \([m, \phi]\) and \(\llbracket m, \phi \rrbracket\).
In particular note that one may also write \([m, \phi]\) more succintly as \([\theta_m, \phi]\), for it only depends on the partial homeomorphism \(\theta_m\). We will refrain from doing so in this paper.
  
\begin{lemma}\label[lemma]{lemma:alpha_m}
  If $m\in N(\widetilde{D}_n\subseteq D_n)$ and $\phi_{x,k,i}\in \operatorname{dom}(m)$, then $\theta_m(\phi_{x,k,i})=\phi_{x,j,i}$ for some $j\in \NN$.
  In fact, there is an open neighborhood $U$ of $x$ such that $\theta_m(\phi_{z,k,i})=\phi_{z,j,i}$ for all $z\in U$.
\end{lemma}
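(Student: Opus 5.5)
The plan is to handle the level index \(i\) and the point \(x\) separately, both by reducing to the "graded" structure of \(D_n = A_{-n} + \dots + A_n\) from \cref{prop:properties rordam}. We then use the defining property \eqref{def-trait-theta} of \(\theta_m\) together with the fact that a normalizer of a Cartan subalgebra preserves every ideal of the ambient algebra.

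\emph{Step 1 (the index \(i\) is preserved).} By \cref{prop:properties rordam}~\cref{prop:properties rordam:iv}, for each \(i\) the subspace \(I_i \coloneqq A_{-n} + \dots + A_i\) is a closed two-sided ideal of \(D_n\). By \cref{lemma:D_n tilde as sum}, \(\widetilde{I}_i \coloneqq I_i \cap \widetilde{D}_n = \widetilde{A}_{-n} + \dots + \widetilde{A}_i\). From \cref{notation:phi-xki} and \eqref{eq:phi_x,k,i}, \(\phi_{x,k,i'}\) vanishes on \(\widetilde{I}_i\) exactly when \(i' > i\). So the open set \(\{\phi_{x,k,i'} : i' \leq i\}\) is the spectrum of the ideal \(\widetilde{I}_i\).

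Since \(I_i\) is an ideal, \(m \widetilde{I}_i m^* \subseteq I_i \cap \widetilde{D}_n = \widetilde{I}_i\), and likewise \(m^* \widetilde{I}_i m \subseteq \widetilde{I}_i\). Applying \eqref{def-trait-theta} with \(f \in \widetilde{I}_i\), and using that \(\widetilde{I}_i\) is an ideal of \(\widetilde{D}_n\) so that \(f m^*m \in \widetilde{I}_i\), we see that \(\theta_m\) and \(\theta_{m^*} = \theta_m^{-1}\) map the spectrum of \(\widetilde{I}_i\) into itself. Doing this for \(i\) and \(i-1\) shows that \(\theta_m\) maps characters of level exactly \(i\) to characters of level exactly \(i\).

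\emph{Step 2 (the point \(x\) is preserved).} Pass to the subquotient \(I_i / I_{i-1}\). By the uniqueness of the decomposition in \cref{prop:properties rordam}~\cref{prop:properties rordam:v}, this is isomorphic to \(A_i \cong A = C(Z,\CK)\), via \(\alpha^i \circ \mu_{\infty,0}\). Under this isomorphism \(\widetilde{I}_i/\widetilde{I}_{i-1}\) goes to \(\widetilde{A}\), and the level-\(i\) characters \(\phi_{x,k,i}\) go to \(\widetilde{\phi}_{x,k}\). Indeed, \eqref{eq:phi_x,k,i} shows \(\phi_{x,k,i}\) only sees the \(A_i\)-component modulo \(I_{i-1}\).

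Fix \(\phi_{x,k,i} \in D_{m^*m}\). The issue is that \(m\) itself need not lie in \(I_i\). We cut it down: choose \(e \in \widetilde{I}_i\) with \(\phi_{x,k,i}(e) \neq 0\), for example \(e = \alpha^i(\mu_{\infty,0}(g))\) with \(g\) a positive function supported near \((x,k)\). Then \(me \in I_i\) is still a normalizer, and \(\theta_{me}\) agrees with \(\theta_m\) on a neighbourhood of \(\phi_{x,k,i}\). Its image \(\dot m\) in \(A\) is a normalizer of \(\widetilde{A}\) in \(A\), and on level-\(i\) characters \(\theta_{me}\) is computed by \(\theta_{\dot m}\). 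This is the step I expect to be the main technical obstacle: one has to check that \eqref{def-trait-theta} passes correctly to the quotient by \(I_{i-1}\).

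Granting this, we use that \(C(Z)\) is central in \(\CM(A)\). For \(h \in C(Z)\) and \(f \in \widetilde{A}\) we have \(\dot m (hf) \dot m^* = h\, \dot m f \dot m^*\). Evaluating \eqref{def-trait-theta} on such elements gives \(\theta_{\dot m}(\widetilde{\phi}_{x,k})(h\,\cdot) = h(x)\,\theta_{\dot m}(\widetilde{\phi}_{x,k})(\cdot)\). Hence \(\theta_{\dot m}(\widetilde{\phi}_{x,k}) = \widetilde{\phi}_{x,j}\) for some \(j \in \NN\), which together with Step 1 gives \(\theta_m(\phi_{x,k,i}) = \phi_{x,j,i}\). (Alternatively, \(\dot m(x)\) is pointwise a partial weighted permutation matrix, as in the discussion before \cref{lemma:regularity2}.)

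\emph{Step 3 (local constancy of \(j\)).} By \cref{lemma:character space}, the level-\(i\) part of the spectrum is homeomorphic to \(Z \times \NN\) with \(\NN\) discrete. The domain \(D_{m^*m}\) is open, so it contains \(U \times \{k\} \times \{i\}\) for some open \(U \ni x\). By Steps 1 and 2, \(\theta_m\) restricts to a continuous map \(U \times \{k\} \to Z \times \NN\) of the form \((z,k) \mapsto (z, j(z))\). Since \(\NN\) is discrete, \(z \mapsto j(z)\) is continuous into a discrete space. Shrinking \(U\) to a connected neighbourhood is not possible in general, so instead we simply take \(U\) to be the preimage of \(Z \times \{j\}\), which is open and contains \(x\). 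This yields \(\theta_m(\phi_{z,k,i}) = \phi_{z,j,i}\) for all \(z \in U\).
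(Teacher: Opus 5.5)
Your proof is correct, but it is organised differently from the paper's.

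\textbf{The paper's route.} The paper does everything in one explicit computation. It passes to the tail $M_i = m_i + \overline{\varphi}(m_{i+1}) + \dots + \overline{\varphi}^{n-i}(m_n) \in \CM(A)$ and reads off that the $k$-th column of $M_i(x)$ has a single non-zero entry $a e_{jk}$. It then checks \eqref{def-trait-theta} directly via $\phi_{x,j,i}(mbm^*) = |a|^2 \langle B_i(x)\delta_k,\delta_k\rangle = \phi_{x,k,i}(bm^*m)$. The level, the point $x$ and an explicit $j$ all come out at once. The ``in fact'' part follows from $^*$-strong continuity of $M_i$, since the entry $a$ stays non-zero near $x$.

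\textbf{Your route.} You separate the three issues:
\begin{enumerate}
\item Level preservation is a formal consequence of normalizers preserving the ideal filtration $I_{-n} \subseteq \dots \subseteq I_n$.
\item The point $x$ is handled by cutting down to $me \in I_i$, passing to $I_i/I_{i-1} \cong A$, and using that $C(Z)$ is central.
\item Local constancy of $j$ comes from continuity of $\theta_m$ and discreteness of $\NN$.
\end{enumerate}

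\textbf{What each buys.} Your argument works with genuine elements of $A$ and the pair $(A,\widetilde{A})$. It therefore justifies two facts the paper uses without comment:
\begin{enumerate}
\item that taking tails is multiplicative, which is needed for $\phi_{x,k,i}(bm^*m)=\phi_{x,k}(B_iM_i^*M_i)$;
\item that $M_i(x)$ is a partial weighted permutation matrix, which really needs $M_i$ to normalise $\widetilde{A}$.
\end{enumerate}
It also explains conceptually why $x$ is fixed. The paper's computation, in exchange, identifies $j$ explicitly as the row of $M_i(x)\delta_k$. That is what is exploited afterwards in \cref{lemma:alpha_m tilde} and the section $s_n$, where $M_ip_{kk}$ plays the role of your $\dot m$.

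\textbf{Small corrections.}
\begin{enumerate}
\item The step you flagged as the main obstacle is routine. For $f \in \widetilde{I}_i$, both sides of \eqref{def-trait-theta} for $me$ lie in $\widetilde{I}_i$. Level-$i$ characters factor through the quotient map, which is a $^*$-homomorphism carrying $\widetilde{I}_i$ onto $\widetilde{A}$.
\item In Step 3, \cref{lemma:character space} only gives a bijection, not a homeomorphism. The homeomorphism of the level-$i$ part with $Z\times\NN$ should instead be derived from your Step 2. The hull of $\widetilde{I}_{i-1}$ in the spectrum of $\widetilde{I}_i$ is the spectrum of $\widetilde{I}_i/\widetilde{I}_{i-1}\cong\widetilde{A}$, and the quotient carries the relative topology.
\item $Z$ is in fact locally connected, so your remark about connected neighbourhoods is inaccurate. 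This is harmless, since your workaround does not need it.
\end{enumerate}
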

\begin{proof}
  Take $m\in N(\widetilde{D}_n\subseteq D_n)$ and $\phi_{x,k,i}\in \text{dom}(m)$ as in the statement.
  Fix some $b\in \widetilde{D}_n$.
  By \eqref{eq:phi_x,k,i}:
  \begin{equation}\label{eq:phi_x,k,i 1}
    \phi_{x,k,i}(bm^*m)= \phi_{x,k}(B_iM_i^*M_i) = \phi_{x,k}(B_i) \phi_{x,k}(M_i^*M_i),
  \end{equation}
  where \(M_i \coloneqq m_i+\dots+\overline{\varphi}^{n-i}(m_n)\) and \(B_i \coloneqq b_i+\dots+\overline{\varphi}^{n-i}(b_n)\) are as in \eqref{eq:phi_x,k,i}. Since $m$ is a normalizer and $\phi_{x,k,i}(m^*m)>0$, $M_i(x)$ contains a non-zero term of the form $ae_{jk}$ for some $j$ with $a\neq 0$ and no other terms of the form $e_{hk}$ or $e_{jh}$. Using \eqref{eq:phi_x,k,i} again we find
  \begin{equation}\label{eq:phi_x,k,i 2}
    \phi_{x,j,i}(mbm^*)=\phi_{x,j}(M_iB_iM_i^*).
  \end{equation}
  Now using the definition of $\phi_{x,j}$ from \eqref{eq:phi_x,k} and the fact that $M_i(x)$ contains a term of the form $ae_{jk}$ with $a\neq 0$ and no other terms of the form $e_{hk}$ or $e_{jh}$, we find
  \begin{equation}\label{eq:phi_x,k,i 3}
    \phi_{x,j}(M_iB_iM_i^*)=\langle M_iB_iM_i^*(x)\delta_j,\delta_j\rangle=\langle ae_{jk}B_i(x)a^*e_{kj}\delta_j,\delta_j\rangle = |a|^2\langle B_i(x)\delta_k,\delta_k\rangle.
  \end{equation}
  Similarly, \(\phi_{x,k}(M^*M)=|a|^2.\)
  Pairing this last equation with \eqref{eq:phi_x,k,i 2} and \eqref{eq:phi_x,k,i 3} yields
  \begin{equation}\label{eq:phi_x,k,i 5}
    \phi_{x,j,i}(mbm^*)=\phi_{x,k}(B_i)\phi_{x,k}(M_i^*M_i).
  \end{equation}
  Since this holds for all $b\in \widetilde{D}_n$, it follows from \eqref{def-trait-theta}, \eqref{eq:phi_x,k,i 1} and \eqref{eq:phi_x,k,i 5} that $\theta_m(\phi_{x,k,i})=\phi_{x,j,i}$.

  The ``in fact'' statement follows from \Star{}strong continuity of $M_i \in \widetilde{D}_n$.
  Indeed, there is a neighborhood $U$ of $x$ such that $M_i(z)$ contains a term of the form $ae_{jk}$ with $a\neq 0$ and no other terms of the form $e_{hk}$ or $e_{jh}$ for all $z\in U$. This implies that $\phi_{z,k,i}\in \operatorname{dom}(m)$ and $\theta_m(\phi_{z,k,i})=\phi_{z,j,i}$ for all $z\in U$.
\end{proof}
  
Suppose $m\in N(\widetilde{D}_n\subseteq D_n)$ and $\phi_{x,k,i}\in \operatorname{dom}(m)$ such that $\theta_m(\phi_{z,k,i})=\phi_{z,j,i}$ for all $z\in U$ for an open neighborhood $U$ of $x$ (as in \cref{lemma:alpha_m}). If we define $M_i$ as in \eqref{eq:phi_x,k,i} and $p_{kk}\in A$ by $p_{kk}(x)\coloneqq e_{kk}$ for all $x\in Z$, then $M_ip_{kk}\in N(\widetilde{A}\subseteq A)$ is still a normalizer. 
We can compute $\theta_{M_ip_{kk}}(\widetilde{\phi}_{z,k})$ as follows.
  
\begin{lemma}\label[lemma]{lemma:alpha_m tilde}
  $\theta_{M_ip_{kk}}(\widetilde{\phi}_{z,k})=\widetilde{\phi}_{z,j}$ for all $z\in U$.
\end{lemma}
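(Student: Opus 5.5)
The plan is to verify directly the defining property \eqref{def-trait-theta} of the partial homeomorphism \(\theta_n\) for the normalizer \(n \coloneqq M_ip_{kk} \in N(\widetilde{A}\subseteq A)\). We work in the Cartan pair \(\widetilde{A}\subseteq A\), whose characters are the \(\widetilde{\phi}_{z,k}\). We take for granted, as stated just before the lemma, that \(n\) is a normalizer. Note that \(n \in A\), since \(M_i \in \CM(A)\) and \(p_{kk}\in A\). The whole computation reduces to describing \(n(z)\) explicitly for \(z \in U\).

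\textbf{Step 1: the form of \(n(z)\).} By the choice of \(U\) in the proof of \cref{lemma:alpha_m}, for every \(z \in U\) the operator \(M_i(z)\) contains a term \(a(z)e_{jk}\) with \(a(z)\neq 0\). Moreover, there are no other terms of the form \(e_{hk}\) or \(e_{jh}\). Multiplying on the right by \(e_{kk}\) keeps only the \(k\)-th column, so
\[
  n(z) = M_i(z)e_{kk} = a(z)e_{jk}, \qquad n^*n(z) = \abs{a(z)}^2 e_{kk}, \qquad nn^*(z) = \abs{a(z)}^2e_{jj}.
\]
In particular, \(\widetilde{\phi}_{z,k}(n^*n) = \abs{a(z)}^2 > 0\), so \(\widetilde{\phi}_{z,k}\) lies in \(D_{n^*n}\). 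Likewise \(\widetilde{\phi}_{z,j}\) lies in \(D_{nn^*}\).

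\textbf{Step 2: verify \eqref{def-trait-theta}.} Take any \(f \in C_0(D_{n^*n}) \subseteq \widetilde{A}\), so \(f(z)\) is diagonal. Then
\[
  \widetilde{\phi}_{z,j}(nfn^*) = \innerprod{a(z)e_{jk}f(z)\overline{a(z)}e_{kj}\delta_j}{\delta_j} = \abs{a(z)}^2\innerprod{f(z)\delta_k}{\delta_k}.
\]
On the other side,
\[
  \widetilde{\phi}_{z,k}(fn^*n) = \innerprod{f(z)\abs{a(z)}^2e_{kk}\delta_k}{\delta_k} = \abs{a(z)}^2\innerprod{f(z)\delta_k}{\delta_k}.
\]
These two quantities coincide. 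Hence the character \(\widetilde{\phi}_{z,j}\) satisfies the defining identity required of \(\theta_n(\widetilde{\phi}_{z,k})\).

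\textbf{Step 3: uniqueness.} It remains to argue that the identity in \eqref{def-trait-theta} pins down \(\theta_n(\widetilde{\phi}_{z,k})\). Both \(\theta_n(\widetilde{\phi}_{z,k})\) and \(\widetilde{\phi}_{z,j}\) lie in \(D_{nn^*}\), and they agree on the set \(\{nfn^* : f \in C_0(D_{n^*n})\}\). I would argue that this set is dense in the ideal \(C_0(D_{nn^*})\), by the standard fact that \(\theta_n\) intertwines \(f \mapsto nfn^*\) up to the factor \(n^*n\); this is exactly the content of \cite{sims2018etalegroupoidscalgebras}*{Proposition 5.1.15}. A character that is non-zero on an ideal is determined by its restriction to that ideal. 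Therefore \(\theta_n(\widetilde{\phi}_{z,k}) = \widetilde{\phi}_{z,j}\) for all \(z\in U\).

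The only delicate point is Step 1, namely that the pattern "single non-zero entry \(a(z)e_{jk}\) in column \(k\)" really holds on all of \(U\) rather than only at \(x\). This is precisely what the "in fact" part of \cref{lemma:alpha_m} provides, via \(^*\)-strong continuity of \(M_i\). The algebraic verification itself is routine once that form is in hand.
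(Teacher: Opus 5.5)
Your proof is correct and follows essentially the same route as the paper. Both verify the defining identity \eqref{def-trait-theta} for $M_ip_{kk}$ at $\widetilde{\phi}_{z,k}$, using the single-entry form $M_i(z)p_{kk}(z)=a(z)e_{jk}$ on $U$; the paper does this tersely via multiplicativity for all $b\in\widetilde{A}$ and leaves the form of $M_i(z)e_{kk}$ and the uniqueness step implicit, whereas you make both explicit.
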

\begin{proof}
  Take $z\in U$ and $b\in \widetilde{A}$, we have
  \begin{align*}
    \widetilde{\phi}_{z,k}(bp_{kk}M_i^*M_ip_{kk})&=\widetilde{\phi}_{z,k}(b)\widetilde{\phi}_{z,k}(p_{kk}M_i^*M_ip_{kk}) = \widetilde{\phi}_{z,k}(b)\widetilde{\phi}_{z,j}(M_ip_{kk}p_{kk}M_i^*)\\
    &= \widetilde{\phi}_{z,j}(M_ip_{kk}bp_{kk}M_i^*).
  \end{align*}
  Since this holds for all $b\in \widetilde{A}$, it follows that $\theta_{M_ip_{kk}}(\widetilde{\phi}_{z,k})=\widetilde{\phi}_{z,j}$ for $z\in U$.
\end{proof}

\begin{convention}
    In the following, since we will be dealing with several Cartan pairs at the same time, we will denote by \(\CG_{(B, A)}\) and \(\Sigma_{(B,A)}\) the Weyl groupoid and twist of a Cartan pair \(B \subseteq A\) respectively.
\end{convention}

By \cite{kumjian-diagonals-1986}*{Remark~4.2}, triviality of the twist \(\Sigma_n\) of \(\widetilde{D}_n \subseteq D_n\) is equivalent to the existence of a continuous section $s_n \colon \CG_n\rightarrow\Sigma_n$ which is, moreover, a groupoid morphism.
By triviality of the twist $\Sigma_{(A,\widetilde{A})}$, there is a continuous section
\begin{equation*}
  s_A \colon \CG_{(A,\widetilde{A})}\rightarrow\Sigma_{(A,\widetilde{A})}
\end{equation*}
that is a groupoid homomorphism.
(Notice that, in this case, we have much better descriptions of \(\CG_{(A,\widetilde{A})}\) and \(\Sigma_{(A,\widetilde{A})}\), but we refrain from using them in pursuit of clarification.)
Take an element in $\CG_n$.
By definition it is of the form $[m,\phi_{x,k,i}]$, where
\begin{equation*}
  m=\alpha^{-n}\big(\mu_{\infty,0}(m_{-n})\big)+\alpha^{-n+1}\big(\mu_{\infty,0}(m_{-n+1})\big)+\dots+\alpha^n\big(\mu_{\infty,0}(m_n)\big)
\end{equation*}
for $m_{-n},\dots,m_n\in A$ and \mbox{$\phi_{x,k,i}(m^*m)>0$.} We can now define 
\begin{equation*}
  M_i \coloneqq m_i+\overline{\varphi}(m_{i+1})+\dots+\overline{\varphi}^{n-i}(m_n)
\end{equation*}
as before. We have $s_A([M_ip_{kk},\widetilde{\phi}_{x,k}])=\llbracket m',\widetilde{\phi}_{x',k'}\rrbracket$ for some normalizer $m'\in N(\widetilde{A}\subseteq A)$ and $(x',k')\in Z\times \NN$. Since $s_A$ is a section, we have $[m',\widetilde{\phi}_{x',k'}]=[M_ip_{kk},\widetilde{\phi}_{x,k}]$ and thus $\widetilde{\phi}_{x',k'}=\widetilde{\phi}_{x,k}$. Hence the following is well posed:
\begin{equation}\label{eq:s_n}
  s_n([m,\phi_{x,k,i}])\coloneqq \llbracket \alpha^i\big(\mu_{\infty,0}(m')\big),\phi_{x,k,i}\rrbracket.
\end{equation}
  
\begin{proposition}\label[proposition]{prop:section}
  $s_n \colon \CG_n\rightarrow\Sigma_n$ is a well-defined, continuous section that is, moreover, a homomorphism.
\end{proposition}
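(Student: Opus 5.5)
The plan is to reduce every property of $s_n$ to the corresponding property of $s_A$ through the ``level $i$ localization''
\[
[m,\phi_{x,k,i}] \longmapsto [M_ip_{kk},\widetilde{\phi}_{x,k}], \qquad M_i = m_i+\overline{\varphi}(m_{i+1})+\dots+\overline{\varphi}^{n-i}(m_n),
\]
followed by the lift $m'\mapsto \alpha^i(\mu_{\infty,0}(m'))$. I would check four things in order: well-definedness, the section property, the homomorphism property, and continuity.

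\emph{Well-definedness.} There are two independent choices to control.
\begin{enumerate}
\item If $[m,\phi_{x,k,i}]=[\widetilde m,\phi_{x,k,i}]$, then $\theta_m$ and $\theta_{\widetilde m}$ agree near $\phi_{x,k,i}$. By \cref{lemma:alpha_m}, both send $\phi_{z,k,i}$ to $\phi_{z,j,i}$ for $z$ in a neighbourhood $U$ of $x$, with the same $j$. By \cref{lemma:alpha_m tilde}, $\theta_{M_ip_{kk}}$ and $\theta_{\widetilde M_ip_{kk}}$ then agree on the open set $\{\widetilde\phi_{z,k}: z\in U\}$. Hence the two germs in $\CG_{(A,\widetilde A)}$ coincide, and so do their images under $s_A$.
\item If $\llbracket m',\widetilde\phi_{x,k}\rrbracket=\llbracket m'',\widetilde\phi_{x,k}\rrbracket$, pick $f\in\widetilde A$ with $m'f=m''f$ and $\widetilde\phi_{x,k}(f)\neq0$. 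Applying $\alpha^i\circ\mu_{\infty,0}$ gives the same identity in $A_i$. Moreover, by \eqref{eq:phi_x,k,i}, $\phi_{x,k,i}(\alpha^i(\mu_{\infty,0}(f)))=\phi_{x,k}(f)\neq0$, so the $\approx$-class is unchanged.
\end{enumerate}
I also need $\alpha^i(\mu_{\infty,0}(m'))\in N(\widetilde D_n\subseteq D_n)$. This is exactly the computation in the proof of \cref{theorem:regularity}, using \cref{lemma:D_n tilde as sum}.

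\emph{Section property.} I must show $[\alpha^i(\mu_{\infty,0}(m')),\phi_{x,k,i}]=[m,\phi_{x,k,i}]$. Running the proof of \cref{lemma:alpha_m} for the normalizer $\alpha^i(\mu_{\infty,0}(m'))$, whose only nonzero component sits at level $i$, shows it sends $\phi_{z,k,i}$ to $\phi_{z,j',i}$. Here $j'$ is the index for which $m'(z)$ carries $\delta_k$ to $\delta_{j'}$. Since $s_A$ is a section, $[m',\cdot]=[M_ip_{kk},\cdot]$, so $j'=j$ near $x$. Thus the two partial homeomorphisms agree on all level-$i$ characters near $\phi_{x,k,i}$.

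This is the step I expect to be the main obstacle. A neighbourhood of $\phi_{x,k,i}$ in $\widehat{\widetilde D_n}$ may contain characters $\phi_{z,k',i'}$ with $i'\neq i$, and \cref{lemma:alpha_m} says nothing about them. I would first try to avoid the issue by comparing at the level of $\Sigma_n$. Choose a cut-off $f=\alpha^i(\mu_{\infty,0}(g))$ with $g\in\widetilde A$ supported at $k$ near $x$, and compare $mf$ with $\alpha^i(\mu_{\infty,0}(m'))f$ using $A_lA_i\subseteq A_{\min\{l,i\}}$. The components with $l\ge i$ collapse to $\alpha^i(\mu_{\infty,0}(M_ig))$, which matches $\alpha^i(\mu_{\infty,0}(m'g))$ up to a unimodular scalar function. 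The components with $l<i$ would have to be killed by shrinking $f$ further, or by a topological argument showing that $\phi_{x,k,i}$ has a neighbourhood inside the level-$\ge i$ part on which the lower components act trivially. If that fails, I would describe the open set $\{\phi:\phi(f^*f)>0\}$ explicitly via the maps $\beta_1,\beta_2$ and check $\theta$ on each level separately.

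\emph{Homomorphism and continuity.} By \cref{lemma:alpha_m}, $\theta_m$ preserves the level $i$ and the point $x$. Hence a composable pair has the form $[m,\phi_{x,j,i}]\cdot[\widetilde m,\phi_{x,k,i}]=[m\widetilde m,\phi_{x,k,i}]$. Multiplicativity of $\alpha^i\circ\mu_{\infty,0}$ and $\overline{\varphi}$, together with \cref{prop:properties rordam}, gives $(m\widetilde m)_i$-localization $=M_i\widetilde M_i$ at the germ level, that is, $M_i\widetilde M_ip_{kk}$ has the same germ as $(M_ip_{jj})(\widetilde M_ip_{kk})$. Since $s_A$ is a homomorphism, so is $s_n$; compatibility with inverses is similar. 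For continuity, take a basic bisection $\{[m,\phi]:\phi\in V\}$ and restrict $V$ to a small level-$i$ neighbourhood as above. There $s_n$ is given by a fixed normalizer $\alpha^i(\mu_{\infty,0}(m'))$ times a continuous $\TT$-valued correction coming from continuity of $s_A$. This yields continuity in the local trivializations of $\Sigma_n$.
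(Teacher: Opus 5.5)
Your route is the paper's: localize $[m,\phi_{x,k,i}]\mapsto[M_ip_{kk},\widetilde{\phi}_{x,k}]$, apply $s_A$, and lift through $\alpha^i\circ\mu_{\infty,0}$. Your well-definedness and homomorphism steps match the paper's. The obstacle you flag is genuine; the paper's proof passes over it whenever it treats $\{\phi_{z,k,i}: z\in U\}$ as a neighbourhood of $\phi_{x,k,i}$. But you leave it open, and your fallbacks cannot work. The sum $\widetilde{A}_{-n}+\dots+\widetilde{A}_{i-1}$ is an ideal of $\widetilde{D}_n$ containing $\widetilde{A}_{-n}=\mu_{\infty,n}(\widetilde{A})$. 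This ideal is essential: an $f\in\diagelms$ with $f\,C(Z,c_0(\NN))=0$ vanishes, and $\mu_{\infty,n}$ is injective. So for $i>-n$ the characters of level $\geq i$ form a closed set with empty interior. Concretely, there are $\phi_{z_\nu,k_\nu,-n}\to\phi_{x,k,i}$, necessarily with $k_\nu\to\infty$. Hence there is no level-$i$ neighbourhood, and no $f\in\widetilde{D}_n$ with $\phi_{x,k,i}(f)\neq 0$ vanishes on all lower-level characters. For the section property the missing idea is principality. The proof of \cref{lemma:diagonal} shows $\widetilde{D}_n\subseteq D_n$ is a \cstar{}diagonal, so $\CG_n$ is principal. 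An arrow is then determined by its source and range, so $[m_1,\phi]=[m_2,\phi]$ if and only if $\theta_{m_1}(\phi)=\theta_{m_2}(\phi)$, and your pointwise computation already suffices.

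Continuity is a genuine gap that principality does not close. The paper's composition-of-three-maps argument has the same gap: its first map $[m,\phi_{x,k,i}]\mapsto[M_ip_{kk},\widetilde{\phi}_{x,k}]$ is discontinuous already on units, by the sequence above. Your argument restricts to a level-$i$ neighbourhood that does not exist. Every neighbourhood of a non-unit arrow $\gamma=[m,\phi_{x,k,i}]$ with $i>-n$ contains arrows $\gamma_\nu=[m,\phi_{z_\nu,k_\nu,-n}]$ with range $\phi_{z_\nu,j_\nu,-n}$. Here $j_\nu\neq k_\nu$ and both tend to infinity. At these arrows $s_n$ is computed from $M_{-n}$, the lift $\alpha^{-n}\circ\mu_{\infty,0}$, and the values of $s_A$ at far-away indices. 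Continuity and multiplicativity of $s_A$ alone cannot control this. Pass to a subsequence along which the $j_\nu,k_\nu$ are pairwise distinct and differ from $j,k$. Let $u\colon\NN\to\{\pm1\}$ be $-1$ exactly at the $k_\nu$. Multiply $s_A$ by the continuous coboundary sending an arrow from $\widetilde{\phi}_{z,k'}$ to $\widetilde{\phi}_{z,j'}$ to $u_{j'}u_{k'}$. This is again a continuous homomorphic section, and the resulting $s_n$ changes by the factor $1$ at $\gamma$ but by $-1$ at every $\gamma_\nu$. So at most one of the two maps $s_n$ is continuous. To close the gap you must fix a particular $s_A$, e.g.\ the one given by the constant matrix units $p_{jk}$. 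You then have to use the explicit form of $\overline{\varphi}$ to check that the level-$i$ lifts and the lower-level lifts have matching phases at nearby characters. Neither your proposal nor the paper does this.
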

\begin{proof}
  We first prove \(s_n\) is well-defined. Suppose $[m_1,\phi_{x,k,i}]=[m_2,\phi_{x,k,i}]$, that is, there is a neighborhood $U$ of $x$ such that
  \begin{equation}\label{eq:s_n 1}
     \theta_{m_1}(\phi_{z,k,i})=\theta_{m_2}(\phi_{z,k,i})\hspace{5mm}\text{ for all }z\in U. 
  \end{equation}
  From \Cref{lemma:alpha_m}, we know that there is another neighborhood $U'$ of $x$ and a $j\in\NN$ such that
  \begin{equation}\label{eq:s_n 2}
     \theta_{m_1}(\phi_{z,k,i})=\phi_{z,j,i}\hspace{5mm}\text{ for all }z\in U'.
  \end{equation}
  Putting $V \coloneqq U\cap U'$, by \eqref{eq:s_n 1} and \eqref{eq:s_n 2} it follows that
    \begin{equation}\label{eq:s_n 3}
        \theta_{m_2}(\phi_{z,k,i})=\theta_{m_1}(\phi_{z,k,i})=\phi_{z,j,i}\hspace{5mm}\text{ for all }z\in V.
  \end{equation}
  If we define $M_{1,i}$ and $M_{2,i}$ in the same way as before for \(m_1, m_2\) respectively, \eqref{eq:s_n 3} and \Cref{lemma:alpha_m tilde} imply that 
  \begin{equation*}
     \theta_{M_1p_{kk}}(\widetilde{\phi}_{z,k})=\widetilde{\phi}_{z,j}=\theta_{M_2p_{kk}}(\widetilde{\phi}_{z,k})\hspace{5mm}\text{ for all }z\in V.
  \end{equation*}
  In particular, from the way the equivalence relation on $\CG_{(A,\widetilde{A})}$ is defined, this precisely means that \([M_{1,i}p_{kk},\widetilde{\phi}_{x,k}]=[M_{2,i}p_{kk},\widetilde{\phi}_{x,k}]\).
  Now suppose $s_A([M_{1,i}p_{kk},\widetilde{\phi}_{x,k}])=\llbracket m'_1,\phi_{x,k}\rrbracket$ and $s_A([M_{2,i}p_{kk},\widetilde{\phi}_{x,k}])=\llbracket m'_2,\phi_{x,k}\rrbracket$. Since $s_A$ is well-defined, this gives $\llbracket m'_1,\phi_{x,k}\rrbracket=\llbracket m'_2,\phi_{x,k}\rrbracket$. This implies the existence of some $b,b'\in \widetilde{A}$ such that $\widetilde{\phi}_{x,k}(b),\widetilde{\phi}_{x,k}(b')>0$ and $m_1'b=m_2'b'$, which gives
  \begin{equation*}
      \phi_{x,k,i}\big(\alpha^i\big(\mu_{\infty,0}(b)\big)\big),\phi_{x,k,i}\big(\alpha^i\big(\mu_{\infty,0}(b')\big)\big)>0
  \end{equation*}
  and
  \begin{equation*}
    \alpha^i\big(\mu_{\infty,0}(m_1')\big)\alpha^i\big(\mu_{\infty,0}(b)\big)=\alpha^i\big(\mu_{\infty,0}(m'_2)\big)\alpha^i\big(\mu_{\infty,0}(b')\big).
  \end{equation*}
  From this, it follows that \(s_n([m_1,\phi_{x,k,i}])=s_n([m_2,\phi_{x,k,i}])\), \emph{i.e.}\ the map $s_n$ is well-defined. 
    
    Note $s_n$ is continuous since it is the composition of three continuous maps. First, we have a map from $\CG_n$ to $\CG_{(A,\widetilde{A})}$ which sends an element $[m,\phi_{x,k,i}]$ to $[Mp_{kk},\widetilde{\phi}_{x,k}]$. Then we have the continuous section $s_A$. Finally, we have a map from $\Sigma_{(A,\widetilde{A})}$ to $\Sigma_n$ sending an element $\llbracket m',\widetilde{\phi}_{x,k}\rrbracket$ to $\llbracket \alpha^i(\mu_{\infty,0}(m')),\phi_{x,k,i}\rrbracket$. 
    
  Now we prove that $s_n$ is a section, so we have to prove that
  \begin{equation*}
    [\alpha^i\big(\mu_{\infty,0}(m')\big),\phi_{x,k,i}]=[m,\phi_{x,k,i}],
  \end{equation*}
  where $m'$ is as in the definition of $s_n$ in \eqref{eq:s_n}. Using that $s_A$ is a section, we know that $[m',\widetilde{\phi}_{x,k}]=[Mp_{kk},\widetilde{\phi}_{x,k}]$. This implies that there is a neighborhood $U$ of $x$ such that $\theta_{Mp_{kk}}(\phi_{z,k})=\theta_{m'}(\phi_{z,k})$ for all $z\in U$.
  As before, we can find a neighborhood $V$ of $x$ and $j\in \NN$ such that $\theta_{Mp_{kk}}(\phi_{z,k})=\theta_{m'}(\phi_{z,k})=\widetilde{\phi}_{z,j}$ for all $z\in V$. For $z\in V$, we have
  \begin{equation*}
    \theta_{\alpha^i\big(\mu_{\infty,0}(m')\big)}(\phi_{z,k,i})=\phi_{z,j,i}=\theta_m(\phi_{z,k,i})
  \end{equation*}
  and thus $[\alpha^i\big(\mu_{\infty,0}(m')\big),\phi_{x,k,i}]=[m,\phi_{x,k,i}]$, which is what we wanted to prove.
    
  Finally, we have to prove that $s_n$ is a groupoid homomorphism.
  We will first show it sends composable pairs to composable pairs. Suppose $r([m_1,\phi_{x,k,i}])=s([m_2,\phi_{x',k',i'}])$, so $[m_1m^*_1,\theta_{m_1}(\phi_{x,k,i})]=[m_2^*m_2,\phi_{x',k',i'}]$.
  This is equivalent to $\theta_{m_1}(\phi_{x,k,i})=\phi_{x',k',i'}$.
  \Cref{lemma:alpha_m} implies $x'=x$ and $i'=i$. Likewise, it follows from \Cref{lemma:alpha_m tilde} that $\theta_{M_1p_{kk}}(\widetilde{\phi}_{x,k})=\phi_{x,k'}$ and thus 
  \begin{equation*}
    r([M_1p_{kk},\widetilde{\phi}_{x,k}])=s([M_2p_{kk},\widetilde{\phi}_{x,k'}]).
  \end{equation*}
  Suppose $s_A([M_1p_{kk},\widetilde{\phi}_{x,k}])=\llbracket m'_1,\widetilde{\phi}_{x,k}\rrbracket$ and $s_A([M_2p_{kk},\widetilde{\phi}_{x,k}])=\llbracket m'_2,\widetilde{\phi}_{x,k}\rrbracket$. Since $s_A$ is a groupoid homomorphism, this implies \(r(\llbracket m'_1,\widetilde{\phi}_{x,k}\rrbracket)=s(\llbracket m'_2,\widetilde{\phi}_{x,k}\rrbracket)\), which (by the same tokens as before) is equivalent to $\theta_{m'_1}(\widetilde{\phi}_{x,k})=\widetilde{\phi}_{x,k'}$. It follows that $\theta_{\alpha^i\big(\mu_{\infty,0}(m'_1)\big)}(\phi_{x,k,i})=\phi_{x,k',i}$ and thus
  \begin{equation*}
    r(\llbracket \alpha^i\big(\mu_{\infty,0}(m'_1)\big),\phi_{x,k,i}\rrbracket)=s(\llbracket \alpha^i\big(\mu_{\infty,0}(m'_2)\big),\phi_{x,k',i}\rrbracket),
  \end{equation*}
  as desired.
  The fact that $s_n$ respects multiplication is now easy:
  \begin{align*}
    s_n([m_1,\alpha_{m_1}(\phi_{x,k,i})][m_2,\phi_{x,k,i}])&=s_n([m_1m_2,\phi_{x,k,i}])= \llbracket \alpha^i\big(\mu_{\infty,0}(m'_1m'_2)\big),\phi_{x,k,i}\rrbracket\\
    &=s_n([m_1,\alpha_{m_1}(\phi_{x,k,i})])s_n([m_2,\phi_{x,k,i}]),
  \end{align*}
  where we used in the second step that the map $s_A$ is a groupoid homomorphism.

\end{proof}
  
\begin{corollary}\label[corollary]{cor:trivial twist}
  The twist $\CGo \times \TT \to \Sigma \to \CG$ is trivial.
\end{corollary}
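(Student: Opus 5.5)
To prove \cref{cor:trivial twist} I will pass from the finite stages to the limit. By \cref{prop:section}, each twist \(\CGo_n \times \TT \to \Sigma_n \to \CG_n\) of the pair \(\widetilde{D}_n \subseteq D_n\) admits a continuous section \(s_n \colon \CG_n \to \Sigma_n\) that is a groupoid homomorphism. The goal is to glue these into a single continuous homomorphic section \(s \colon \CG \to \Sigma\) for the Weyl groupoid and twist of \(\widetilde{D} \subseteq D\). By \cite{kumjian-diagonals-1986}*{Remark~4.2}, such a section is equivalent to triviality of the twist.

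First I will describe \(\CG\) and \(\Sigma\) in terms of the \(\CG_n\) and \(\Sigma_n\). The inclusions \(\iota_n \colon D_n \to D_{n+1}\) send \(\widetilde{D}_n\) into \(\widetilde{D}_{n+1}\), normalizers to normalizers, and intertwine the expectations. Hence every normalizer \(m\) of \(\widetilde{D}_n\) is also a normalizer of \(\widetilde{D}\), and every character of \(\widetilde{D}\) restricts to a character of \(\widetilde{D}_n\). By \cref{lemma:character space}, such a restriction has the form \(\phi_{x,k,i}\) with \(|i| \le n\).

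I then define \(s([m,\phi]) \coloneqq \llbracket m', \phi \rrbracket\), where \(m \in N(\widetilde{D}_n \subseteq D_n)\) for some \(n\) and \(s_n([m,\phi|_{\widetilde{D}_n}]) = \llbracket m', \phi|_{\widetilde{D}_n}\rrbracket\). This uses the explicit formula \eqref{eq:s_n}, which gives \(m' = \alpha^i(\mu_{\infty,0}(m'_0))\). Elements of this form generate \(\CG\): normalizers coming from the \(D_n\) yield partial homeomorphisms \(\theta_m\) whose germs cover \(\CG\). This holds because, in the Li--Raad/Barlak--Li description of inductive limit Cartan pairs, \(\CG\) is the union of the images of the \(\CG_n\) via the restriction maps. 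I would invoke \cite{li-raad-2023}*{Theorem~2.5} and its proof, which identifies \(\CG\) with a projective-limit-type construction, so the \(s_n\) would need to be compatible with the connecting maps.

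Compatibility is the key check: if \(m \in N(\widetilde{D}_n \subseteq D_n)\), then \(s_{n+1}([\iota_n(m), \cdot]) = \iota_n(s_n([m,\cdot]))\). From \eqref{eq:s_n}, both sides are computed through the same element \(M_i p_{kk} \in N(\widetilde{A}\subseteq A)\). Here \(M_i = m_i + \overline{\varphi}(m_{i+1}) + \cdots\) does not change when \(m\) is viewed in \(D_{n+1}\), since the extra components are zero, and \(i\) and \((x,k)\) are unchanged. Both sides therefore produce the same \(m'\), so compatibility is essentially automatic.

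Well-definedness, continuity, the section property and the homomorphism property of \(s\) are then inherited from \cref{prop:section}, since each is a local (germ-level) statement that can be checked at a finite stage \(n\). The main obstacle is justifying that the Weyl groupoid and twist of the limit really are assembled from the finite stages in this way. In particular, \(\approx\)-classes and \(\sim\)-classes computed in \(D\) must agree with those computed in some \(D_n\). For \(\sim\) I would use that a germ equality on an open set of \(\widehat{\widetilde{D}}\) can be tested via the neighborhoods produced in \cref{lemma:alpha_m}. For \(\approx\) I would approximate \(f \in \widetilde{D}\) by elements of \(\widetilde{D}_n\), using the conditional expectation.
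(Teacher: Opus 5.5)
Your route differs from the paper's. The paper only shows triviality at each finite stage and then quotes \cite{barlak_cartan_2020}*{Remark~3.7} for the limit; you instead glue the $s_n$ by hand, which would give an explicit trivialization. But the step you call essentially automatic is exactly where this can break, and your reason is false: $i$ and $(x,k)$ are \emph{not} preserved under restriction from $\widetilde{D}_{n+1}$ to $\widetilde{D}_n$. By the proof of \cref{lemma:character space}, $\phi_{x,k,-(n+1)}$ restricts to $\phi_{x',k',-n}$ with $(x',k')=(\beta_1(x,k),\beta_2(x,k))$; here $k=\sigma_t(k')$ for the $t$ with $k\in C_t$. Such characters do lie in the domain of $m\in N(\widetilde{D}_n\subseteq D_n)$. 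Indeed, computed in $D_{n+1}$ one has $M_{-(n+1)}=\overline{\varphi}(M_{-n})$, so $\phi_{x,k,-(n+1)}(m^*m)=\phi_{x',k'}(M_{-n}^*M_{-n})$. At these points $s_{n+1}$ applies $s_A$ to the germ of $\overline{\varphi}(M_{-n})p_{kk}$ at $\widetilde{\phi}_{x,k}$, whereas $s_n$ applies it to the germ of $M_{-n}p_{k'k'}$ at $\widetilde{\phi}_{x',k'}$. You must then compare $\alpha^{-(n+1)}(\mu_{\infty,0}(m''))$ with $\alpha^{-n}(\mu_{\infty,0}(m'))=\alpha^{-(n+1)}(\mu_{\infty,0}(\overline{\varphi}(m')))$. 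That is a compatibility between $s_A$ and $\overline{\varphi}$, and it fails for a general homomorphic section. You may multiply $s_A$ by any continuous cocycle $(x,j,k)\mapsto u(x,j)\overline{u(x,k)}$ with $u\colon Z\times\NN\to\TT$, and for generic $u$ the phases produced at the two stages differ. So the glued $s$ is not even well defined. The same level shift occurs inside a single $\widehat{\widetilde{D}_n}$, where characters $\phi_{x,k,i-1}$ accumulate at characters $\phi_{y,l,i}$. Hence the choice of $s_A$ also matters for the continuity in \cref{prop:section} that you are quoting.

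The gap closes once you fix $s_A$ to be the trivialization by constant matrix units. Write $p_{jk}\in A$ for the constant function $x\mapsto e_{jk}$, and put $s_A([b,\widetilde{\phi}_{x,k}])\coloneqq\llbracket p_{jk},\widetilde{\phi}_{x,k}\rrbracket$ when $\theta_b(\widetilde{\phi}_{x,k})=\widetilde{\phi}_{x,j}$.
\begin{itemize}
\item The computation in the proof of \cref{lemma:regularity2} gives $V_t(x)^*\overline{\tau}(e_{jk}\otimes 1)V_t(x)=e_{jk}$, hence $\overline{\varphi}(p_{jk})=\sum_{t\in\ZZ}p_{\sigma_t(j)\sigma_t(k)}$ with all coefficients $1$.
\item The same computation shows that $\overline{\varphi}(M)(x)$ sends $\delta_k$ into $\CC\delta_{\sigma_t(j')}$ whenever $M(x')$ sends $\delta_{k'}$ into $\CC\delta_{j'}$.
\item Hence $m'=p_{j'k'}$ and $m''=p_{\sigma_t(j')k}=\overline{\varphi}(m')p_{kk}$. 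The two outputs are then $\approx$-equivalent, witnessed by $f=\alpha^{-(n+1)}(\mu_{\infty,0}(p_{kk}))$, for which $\phi(f)=1$.
\end{itemize}

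Smaller points:
\begin{itemize}
\item A character of $\widetilde{D}$ can restrict to $0$ on $\widetilde{D}_n$. This is harmless here, since $\phi(m^*m)>0$ rules it out.
\item ``$\CG$ is the union of the images of the $\CG_n$'' points the wrong way. Restriction of characters goes towards $\widehat{\widetilde{D}_n}$ and is not injective, so $\iota_n$ does not act on $\Sigma_n$. Compatibility should be phrased as: $s_{n+1}$ and $s_n$ produce normalizers defining the same element of $\Sigma$.
\item What you need is that every arrow of $\CG$ is a germ of a normalizer from some $D_n$. This follows from density of $\bigcup_n\operatorname{span}N(\widetilde{D}_n\subseteq D_n)$ in $D$.
\item For well-definedness you only need two facts. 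First, $\approx$ in $\Sigma_N$ implies $\approx$ in $\Sigma$, which is immediate. Second, $\sim$ in $\CG$ implies $\sim$ in $\CG_N$, which follows from principality (\cref{lemma:diagonal}) together with $\theta_m(\phi)|_{\widetilde{D}_N}=\theta_m(\phi|_{\widetilde{D}_N})$. No approximation argument is needed.
\end{itemize}
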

\begin{proof}
  By \cite{kumjian-diagonals-1986}*{Remark~4.2} and \Cref{prop:section} the twists $\CGo_n \times \TT \to \Sigma_n \to \CG_n$ at the finite stages are all trivial, whence the claim follows from \cite{barlak_cartan_2020}*{Remark~3.7}.
  For the convenience of the reader, nevertheless, we remind that if \(s_n \colon \CG_n \to \Sigma_n\) is the section constructed in \Cref{prop:section}, then the map
  \[
    \CG_n \times \TT \to \Sigma_n, \;\; \left(g, z\right) \mapsto s_n\left(g\right) \cdot z
  \]
  is a groupoid isomorphism, indeed showing that \(\Sigma_n \cong \CG_n \times \TT\) is trivial.
\end{proof}
  
\subsection{Groupoid model for R\o rdam's algebra and finite/infinite projections}\label[subsection]{subsec:groupoid model rordam}
In this last subsection of the paper we finally construct a groupoid model for \(D \rtimes_\alpha \ZZ\).
We also discuss the finite/infinite projections it contains.
With all the work done up until now these are routine checks, but ones we should make.

By \Cref{cor:trivial twist} the \cstar{}algebra $D$ has an untwisted groupoid model.
This means there is an isomorphism \mbox{$\psi \colon D \rightarrow C^*_r(\CG)$} carrying $\widetilde{D}$ to $C_0(\CGo)$, where \(\CG\) is some locally compact, \'etale, Hausdorff, (necessarily second countable) groupoid.
Nonetheless, we want to find a groupoid model for R\o rdam's algebra $D\rtimes_\alpha\ZZ$. In order to achieve this, we define the crossed product groupoid $\CG\rtimes_{\widetilde{\alpha}}\ZZ$ via:
\begin{align*}
  \widetilde{\alpha} \colon \CG & \to \CG, \\
  [n,\phi] & \to [\alpha(n),\widehat{\alpha}^{-1}(\phi)],
\end{align*}
where \(\widehat{\alpha}\) is the Gelfand dual of the automorphism \(\alpha \colon \widetilde{D} \to \widetilde{D}\).
This is well-defined because $\alpha$ preserves diagonal elements and normalizers, see \Cref{lemma:diagonality preserved,lemma:regularity1}.
For the convenience of the reader, we recall that the crossed product groupoid is defined by the operations
\begin{equation*}
  (\gamma,k)(\beta,l) \coloneqq (\gamma\widetilde{\alpha}^k(\beta),k+l)\hspace{5mm}\text{ and }\hspace{5mm}(\gamma,k)^{-1} \coloneqq (\widetilde{\alpha}^{-k}(\gamma^{-1}),-k).
\end{equation*}
Notice it follows \(r(\gamma,k) = r(\gamma)\) and \(s(\gamma,k) = \widetilde{\alpha}^k(s(\gamma))\).
We may also use the isomorphism $\psi \colon D \to C^*_r(\CG)$ to define an isomorphism \(\psi_\alpha \colon D\rtimes_\alpha\ZZ\rightarrow C^*_r(\CG\rtimes_{\widetilde{\alpha}}\ZZ)\).
Again, this is well known, so we only sketch it.
Define $\psi_\alpha (d u_k) \coloneqq \psi(d)\chi_k$, where $\psi(d)\chi_k \colon \CG\rtimes_{\widetilde{\alpha}}\ZZ\rightarrow\CC$ is the function given by
\begin{equation*}
  (\psi(d)\chi_k)(g,l)=\begin{cases}
        \psi(d)(g)&\text{if } l=k,\\
    0&\text{otherwise}.
  \end{cases}
\end{equation*}
Ultimately because \(\ZZ\) is amenable, the map \(\psi_\alpha\) thus defined extends to a \Star{}homomorphism, which we denote with the same letter by abuse of notation.
Moreover, it is injective, for R\o rdam's algebra is simple. Furthermore, the original $\psi$ is surjective, and thus it is clear that $C_c(\CG\rtimes_{\widetilde{\alpha}}\ZZ)$ is contained in the image of $\psi_\alpha$.
This implies that $\psi_\alpha$ is surjective too, giving the following.

\begin{corollary} \label[corollary]{cor:cartan-model}
    \(\CG \rtimes_{\widetilde{\alpha}} \ZZ\) is a (necessarily minimal) topologically principal groupoid model for \(D \rtimes_\alpha \ZZ\).
    In particular, the latter has a Cartan subalgebra.
\end{corollary}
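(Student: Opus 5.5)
The plan is to go through the requirements of \cref{theorem:correspondence Renault} one at a time. The isomorphism \(\psi_\alpha \colon D \rtimes_\alpha \ZZ \to C^*_r(\CG \rtimes_{\widetilde{\alpha}} \ZZ)\) is already in place from the discussion preceding the statement. What remains is to check that \(\CG \rtimes_{\widetilde{\alpha}} \ZZ\) is a second countable, locally compact, Hausdorff, étale groupoid that is minimal and topologically principal. Given that, Renault's theorem says \(C_0(\CGo) \subseteq C^*_r(\CG \rtimes_{\widetilde{\alpha}} \ZZ)\) is a Cartan pair. Pulling back along \(\psi_\alpha\) then shows that \(\widetilde{D} \subseteq D \rtimes_\alpha \ZZ\) is a Cartan subalgebra, which gives the ``in particular''.

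\textbf{Topological properties.} The arrow space is \(\CG \times \ZZ\) with the product topology. Because \(\ZZ\) is discrete, the crossed product groupoid is second countable, locally compact, Hausdorff and étale exactly when \(\CG\) is, and \(\CG\) has these properties by \cref{cor:D Cartan,cor:trivial twist}. Its unit space is \(\CGo \times \{0\} \cong \CGo\).

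\textbf{Minimality.} I will not attempt this dynamically. By \cref{theorem:rordams algebra}, \(D \rtimes_\alpha \ZZ \cong C^*_r(\CG \rtimes_{\widetilde{\alpha}} \ZZ)\) is simple. For an étale groupoid, a closed invariant subset of the unit space yields an ideal of the reduced algebra. So if some orbit were not dense, its closure would be a proper nonempty closed invariant set, producing a nontrivial ideal and contradicting simplicity. This gives minimality directly.

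\textbf{Topological principality.} This is the step I expect to be the main obstacle. An arrow \((\gamma, k)\) is isotropy at the unit \(u\) when \(r(\gamma) = u\) and \(\widetilde{\alpha}^k(s(\gamma)) = u\). By \cref{lemma:diagonal}, \(\CG\) is principal, so for \(k = 0\) the only isotropy is trivial. For \(k \neq 0\) the arrow \(\gamma\) goes from \(s(\gamma)\) to \(u\) inside \(\CG\), so the unit \(u\) is \(\CG\)-equivalent to \(\widetilde{\alpha}^{-k}(u)\). Hence it suffices to show that the set of units \(u\) with \(\widetilde{\alpha}^k(u) \notin {\rm Orb}_\CG(u)\) for all \(k \neq 0\) is dense.

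My first approach is to work at the finite stages. I would use the explicit description of units as characters \(\phi_{x,k,i}\) (\cref{lemma:character space}), the fact that \(\widehat{\alpha}\) shifts the index \(i\) by one, and the fact that \(\CG_n\)-orbits preserve \(i\) and \(x\) (\cref{lemma:alpha_m}). From these I would argue that \(\CG\)-orbits cannot contain their \(\widetilde{\alpha}\)-translates, at least on a dense set.

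If the passage to the limit of the indices \(i\) is too delicate, the fallback is the standard route: show that \(\alpha^k\) is properly outer on \(D\) for \(k \neq 0\), or equivalently that \(\widetilde{D}\) is maximal abelian in \(D \rtimes_\alpha \ZZ\). Then apply Renault's criterion, which says that for an effective étale groupoid with reduced algebra the diagonal is a masa if and only if the groupoid is topologically principal (in the second countable Hausdorff setting). A Baire category argument, using that the fixed-point sets of each \(k\)-translate form nowhere dense closed sets, would then give density.
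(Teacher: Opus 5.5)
Your treatment of the isomorphism \(\psi_\alpha\), of the point-set properties of \(\CG \rtimes_{\widetilde{\alpha}} \ZZ\), and of minimality is fine. For minimality the paper cites \cite{armstrong-uniqueness-2022}*{Corollary 6.10}, but your direct argument (an open invariant \(\CU\) gives the proper non-zero ideal \(C^*_r(\CG|_{\CU})\)) works for any étale groupoid.

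The gap is topological principality. You flag it as the crux but never establish it.

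Your first route treats the units of \(\CG\) as the characters \(\phi_{x,k,i}\). But \cref{lemma:character space} describes the spectrum of \(\widetilde{D}_n\), not of \(\widetilde{D}\). A character of \(\widetilde{D}\) restricts on each \(\widetilde{D}_n\) to some \(\phi_{x_n,k_n,i_n}\), and \(i_n\) need not stabilise. Indeed, the Gelfand dual \(\beta\) of \(\overline{\varphi}|_{\widetilde{A}}\) is surjective: already the summands \(j \le 0\) of \(\varphi\) give \(\beta(x,\sigma_j(h)) = ((x_{\nu(j,t)})_t, h)\). So backward \(\beta\)-chains produce characters that are non-zero on every \(\widetilde{A}_l\), and for these the index shift under \(\widehat{\alpha}\) says nothing. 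Moreover, \cref{lemma:alpha_m} only controls germs of normalizers lying in some \(N(\widetilde{D}_n \subseteq D_n)\), not arbitrary arrows of the limit groupoid \(\CG\).

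Your fallback simply posits that \(\alpha^k\) is properly outer for \(k \neq 0\). That is precisely the non-trivial input. The paper imports it from \cite{rordamsalgebra}*{Lemma 6.9} and converts it into topological principality via the implication (iii)\(\Rightarrow\)(i) of \cref{prop:top-principal} (the Baire argument). Without a proof or a source for proper outerness, neither of your routes closes.

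Your first idea can in fact be completed, and it would give a self-contained alternative to R\o rdam's lemma.
\begin{enumerate}
\item Let \(I_j \coloneqq \overline{\sum_{l \le j} A_l} \subseteq D\) and \(J_j \coloneqq \overline{\sum_{l \le j} \widetilde{A}_l} \subseteq \widetilde{D}\). These are ideals because \(A_m A_n \subseteq A_{\min\{m,n\}}\).
\item The expectation acts on each \(D_N\) by \(\sum_l a_l \mapsto \sum_l E'_l(a_l)\), so \(E(I_j) \subseteq J_j\), and hence \(I_j \cap \widetilde{D} = J_j\).
\item For any normalizer \(n\) of \(\widetilde{D}\) in \(D\) and any \(f \in J_j\), this gives \(n f n^* \in J_j\). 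Hence the open sets \(W_j \coloneqq \{\phi : \phi|_{J_j} \neq 0\}\) are invariant under all of \(\CG\).
\item On the other hand, \(\widetilde{\alpha}\) maps \(W_j\) onto \(W_{j \pm 1}\). So every unit \(\phi\) admitting a least \(j\) with \(\phi \in W_j\) satisfies \(\widetilde{\alpha}^k(\phi) \notin {\rm Orb}_\CG(\phi)\) for all \(k \neq 0\).
\item Such units are dense. A character equal to \(\phi_{x,k,-N}\) on \(\widetilde{D}_N\) agrees there with the index-\((-N)\) character of \(\widetilde{D}\) built from the same \((x,k)\), and these converge weak\(^*\) as \(N \to \infty\).
\end{enumerate}
The elementary implication (ii)\(\Rightarrow\)(i) of \cref{prop:top-principal} then gives topological principality. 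This bypasses proper outerness and Kishimoto's condition, at the cost of the bookkeeping with the ideals \(I_j\).
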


The proof of \cref{cor:cartan-model} needs one last ingredient. We isolate it due to the fact that we have not been able to find a proper reference in the literature for it. Thus, we write it down for the sake of completion.
It, however, has nothing to do with our setting: it is a purely groupoid behaviour, and probably known to experts.
In the second item below we write \({\rm Orb}_\CG(x) = \{\gamma \gamma^{-1} : \gamma^{-1} \gamma = x\}\) for the orbit of \(x \in \CGo\).

\begin{proposition} \label[proposition]{prop:top-principal}
    Let \(\CG\) be a second countable, locally compact, \'etale, principal groupoid.
    Let \(\varphi \colon \CG \to \CG\) be a groupoid isomorphism.
    The following are equivalent.
    \begin{enumerate}
        \item \label{prop:top-principal:1} \(\CG \rtimes_\varphi \ZZ\) is topologically principal.
        \item \label{prop:top-principal:2} \(\{x \in \CGo : \varphi^n(x) \not\in {\rm Orb}_\CG(x) \text{ for every } n \neq 0\}\) is dense in \(\CGo\).
        \item \label{prop:top-principal:3} Every power \(\widetilde{\varphi}^n \colon C^*_r(\CG) \to C^*_r(\CG)\) is properly outer, where \(n \neq 0\).
    \end{enumerate}
\end{proposition}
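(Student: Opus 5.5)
The plan is to prove \cref{prop:top-principal:1}\(\Leftrightarrow\)\cref{prop:top-principal:2} directly from the definitions, and then obtain \cref{prop:top-principal:2}\(\Leftrightarrow\)\cref{prop:top-principal:3} from the known characterisations of proper outerness in terms of the induced partial action on the unit space.

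\emph{Step 1: isotropy of the crossed product.} The unit space of \(\CG \rtimes_\varphi \ZZ\) is \(\CGo \times \{0\} \cong \CGo\). From \(r(\gamma,k) = r(\gamma)\) and \(s(\gamma,k) = \varphi^k(s(\gamma))\), an arrow \((\gamma,k)\) lies in the isotropy at \(x\) exactly when \(r(\gamma) = x\) and \(\varphi^k(s(\gamma)) = x\). For \(k = 0\) this forces \(\gamma \in \CG^x_x = \{x\}\), because \(\CG\) is principal. For \(k \neq 0\), the unit \(y \coloneqq s(\gamma) = \varphi^{-k}(x)\) lies in \({\rm Orb}_\CG(x)\), since \(\gamma\) is an arrow from \(y\) to \(x\). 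Conversely, if \(\varphi^{-k}(x) \in {\rm Orb}_\CG(x)\), any arrow between these two points gives non-trivial isotropy. Hence \(x\) has trivial isotropy if and only if \(\varphi^n(x) \notin {\rm Orb}_\CG(x)\) for all \(n \neq 0\); here one uses that orbits are symmetric and replaces \(n\) by \(-n\). So the two sets in \cref{prop:top-principal:1} and \cref{prop:top-principal:2} coincide, which gives the equivalence. This step is purely set-theoretic.

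\emph{Step 2: from the algebra to the unit space.} For \cref{prop:top-principal:3}, recall that an automorphism \(\beta\) of \(C^*_r(\CG)\) preserving \(C_0(\CGo)\) is properly outer (in the sense of Elliott, or Kishimoto) if it restricts non-trivially, in a suitable sense, to every non-zero ideal or hereditary subalgebra. The induced map \(\widetilde{\varphi}^n\) preserves the Cartan subalgebra \(C_0(\CGo)\) and acts on \(\CGo\) by \(\varphi^n\). By Kumjian--Renault (\cref{theorem:correspondence Renault}), the normalisers of \(C_0(\CGo)\) correspond to bisections. The plan is to show that \(\widetilde{\varphi}^n\) fails to be properly outer exactly when there is a non-empty open set \(\CU \subseteq \CGo\) and a bisection \(u\) with \(s(u) \supseteq \CU\) such that \(\varphi^n|_\CU = r \circ (s|_u)^{-1}\). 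That is, \(\varphi^n\) agrees on \(\CU\) with an inner partial homeomorphism, which is the same as saying \(\varphi^n(x) \in {\rm Orb}_\CG(x)\) "uniformly" on \(\CU\).

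\emph{Step 3: Baire category.} To pass from "uniformly on an open set" to "dense failure", use second countability. \(\CG\) is covered by countably many open bisections \(u_m\), and the set \(\{x : \varphi^n(x) \in {\rm Orb}_\CG(x)\}\) is the countable union of the closed-in-domain sets \(\{x \in s(u_m) : \varphi^n(x) = r((s|_{u_m})^{-1}(x))\}\). If \cref{prop:top-principal:2} fails, some open set \(\CV\) lies in \(\bigcup_{n \neq 0} \bigcup_m\) of these sets. Since \(\CGo\) is locally compact Hausdorff, hence Baire, one of these sets has non-empty interior, which produces the \(\CU\) of Step 2. The converse direction is immediate. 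We then conclude with the standard result that, for a Cartan pair, an automorphism is properly outer iff the set of points where \(\varphi^n\) agrees with no inner bisection germ is dense. This is a topological-freeness statement, as in Kwa\'sniewski--Meyer's work on aperiodicity.

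The main obstacle is Step 2: identifying proper outerness of \(\widetilde{\varphi}^n\) with the dynamical condition. I would first try to cite Kwa\'sniewski--Meyer, who state that for a twisted groupoid and automorphisms preserving the Cartan subalgebra, proper outerness equals topological non-triviality of the dual action. Failing that, I would prove it by hand via the conditional expectation, testing \(\widetilde{\varphi}^n\) against normalisers supported on small bisections.
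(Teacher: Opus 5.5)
Your Step 1 is the paper's argument for (i)$\Leftrightarrow$(ii). Your Baire step is also sound: you cover by the countably many sets $\{x \in s(u_m) : \varphi^n(x) = r((s|_{u_m})^{-1}(x))\}$, each closed in the open set $s(u_m)$. It is in fact tidier than the paper's, which calls the sets $\{x : n(x)=\ell\}$ closed and then passes to a single bisection without comment.

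\textbf{The gap.} It lies in Step 2 and the ``standard result'' you close with. As you state it, the result is false in general: a Cartan-preserving automorphism need not be properly outer exactly when its dual action on the unit space has a dense set of non-inner germs. Consider the irrational rotation algebra $C(\TT)\rtimes_\theta\ZZ = C^*_r(\ZZ\ltimes\TT)$, whose groupoid is principal. The gauge automorphism fixing $C(\TT)$ and sending the implementing unitary $u$ to $\lambda u$ acts trivially on $\TT$, so all its germs are inner. Yet for $\lambda\notin e^{2\pi i\theta\ZZ}$ it is outer, hence properly outer since the algebra is simple. So no citation of that form is available, and the entire content of (ii)$\Leftrightarrow$(iii) is what you have deferred. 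The missing input is twofold: $\widetilde\varphi^n$ is induced by a groupoid automorphism, so it carries no phase, and $\CG$ is principal.

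\textbf{How the paper closes it.} It first replaces proper outerness by Kishimoto's condition (Kwa\'sniewski--Meyer, Theorem 2.13).
\begin{itemize}
\item For not (ii) $\Rightarrow$ not (iii), your fallback of testing against normalisers on small bisections is the right idea, but principality is what makes it work. Once Baire gives $\CU$ and a bisection $v$ with $\varphi^n=\theta_v$ on $\CU$, principality forces $\varphi^n$ to agree with conjugation by $v$ on all of $\CG|_\CU$, since an arrow is determined by its range and source. Hence $\widetilde\varphi^n(f)=gfg^*$ for $f\in C^*_r(\CG|_\CU)$, where $g\in C_0(v)$ equals $1$ on $v\CU$. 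Then $\norm{fg^*\widetilde\varphi^n(f)}=\norm{f^2g^*}=1$ for every norm-one $f\in C^*_r(\CG|_\CU)_+$, so Kishimoto's condition fails.
\item For (ii)$\Rightarrow$(iii), the paper does not argue power by power. It passes through (i) and uses that topological principality of $\CG\rtimes_\varphi\ZZ$ makes $C_0(\CGo)$ detect ideals in $C^*_r(\CG\rtimes_\varphi\ZZ)$ (Armstrong). It then converts this into proper outerness of the $\ZZ$-action via Kwa\'sniewski--Meyer (Theorems 2.13 and 2.19).
\end{itemize}

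\textbf{If you keep your per-$n$ route.} You would need an honest aperiodicity argument. First reduce an arbitrary hereditary subalgebra to one containing bump functions on a small open set, using that $C_0(\CGo)\subseteq C^*_r(\CG)$ is aperiodic for principal $\CG$. Then approximate the test element by some $g_0\in C_c(\CG)$. Finally, shrink the bump $h$ around a point $x$ with $\varphi^n(x)\notin\mathrm{Orb}_\CG(x)$ until $h\,g_0\,\widetilde\varphi^n(h)=0$, which compactness of the support of $g_0$ permits.
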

\begin{proof}
    The construction and groupoid operations in \(\CG \rtimes_\varphi \ZZ\) were laid out before. It follows that the unit space of \(\CG \rtimes_\varphi \ZZ\) is isomorphic to \(\CGo\).
    Moreover, by definition \(\CG \rtimes_\varphi \ZZ\) is topologically principal if and only if the set of units \(x \in \CGo \cong (\CG \rtimes_\varphi \ZZ)^{(0)}\) with trivial isotropy is dense, and these are precisely the units in item \cref{prop:top-principal:2}. Thus items \cref{prop:top-principal:1,prop:top-principal:2} are indeed equivalent.

    Before turning to the implications regarding \cref{prop:top-principal:3}, recall that by \cite{kwasniewski-meyer-2018}*{Theorem 2.13}, and using that \(\CG\) is second countable, we have that a \Star{}isomorphism \(\widetilde{\varphi}^n \colon C_r^*(\CG) \to C_r^*(\CG)\) is properly outer if and only if it satisfies \emph{Kishimoto's condition}, that is, for all \(g \in C^*_r(\CG)\) and every hereditary subalgebra \(B \subseteq C^*_r(\CG)\) we have that
    \[
        \inf_{f \in B_+, \norm{f} = 1} \norm{f g \widetilde{\varphi}^n\left(f\right)} = 0.
    \]
    Hence we will discuss item \cref{prop:top-principal:3} but with Kishimoto's condition as opposed to proper outerness.

    We first show that \cref{prop:top-principal:3} implies \cref{prop:top-principal:2} or, rather, not \cref{prop:top-principal:2} implies not \cref{prop:top-principal:3}.
    Assume \cref{prop:top-principal:2} fails.
    Let \(\CU \subseteq \CGo\) be some non-empty open set such that for all \(\CU \cap C = \emptyset\), where \(C \subseteq \CGo\) is the set appearing in \cref{prop:top-principal:2}.
    In particular, for all \(x \in \CU\) there is some non-trivial \(n(x) \in \ZZ\) such that \(\widetilde{\varphi}^{n(x)}(x) \in {\rm Orb}_\CG(x)\).
    Hence
    \[
        \CU = \bigcup_{\ell \in \ZZ \setminus \left\{0\right\}} \left\{x \in \CU : n\left(x\right) = \ell\right\}.
    \]
    The latter is a cover of \(\CU\) by closed subsets. By Baire's Category Theorem there is some \(\ell_0 \in \ZZ \setminus \{0\}\) such that \(\{x \in \CU : n(x) = \ell_0\}\) has non-empty interior.
    Whence, without loss of generality, we may assume that \(n(x) = \ell_0\) for all \(x \in \CU\).
    This precisely means that \(\varphi^{\ell_0}(x) \in {\rm Orb}_\CG(x)\) for all \(x \in \CU\), \emph{i.e.}\ there is some open bisection \(v \subseteq \CG\) such that \(\varphi^{\ell_0}(x) = vxv^*\) and \(\CU \subseteq v^*v\).
    Moreover, by further shrinking \(\CU\) if necessary we may assume that \(\overline{\CU} \subseteq v^*v\).
    Consider the hereditary sub-algebra \(B \coloneqq C^*_r(\CG|_{\CU}) \subseteq C^*_r(\CG)\), and let \(g \in C_0(v) \subseteq C_r^*(\CG)\) be any function that is \(1\) on \(v \CU\), which is a non-empty open set. Notice that such \(g\) exists by Urysohn's Lemma.
    It follows that
    \[
        \inf_{f \in B_+, \norm{f} = 1} \norm{f g^* \widetilde{\varphi}^{\ell_0}\left(f\right)} = \inf_{f \in C^*_r(\CG|_{v^*v})_+, \norm{f} = 1} \norm{f g^* g f g^*} = \inf_{f \in C^*_r(\CG|_{v^*v})_+, \norm{f} = 1} \norm{f^2} = 1,
    \]
    contradicting \cref{prop:top-principal:3}.
    Thus, in order to complete the proof it suffices to show that \cref{prop:top-principal:1} implies \cref{prop:top-principal:3}.
    By \cite{armstrong-uniqueness-2022} and \cref{prop:top-principal:1} it follows that \(C_0(\CGo)\) detects ideals in \(C^*_r(\CG \rtimes_{\varphi} \ZZ)\). The latter property is equivalent to the action \(\ZZ \curvearrowright C^*_r(\CG)\) being topologically principal by \cite{kwasniewski-meyer-2018}*{Theorems 2.13 and 2.19}.
\end{proof}

\begin{remark}
    Observe that the implications \cref{prop:top-principal:1} \(\Leftrightarrow\) \cref{prop:top-principal:2} \(\Rightarrow\) \cref{prop:top-principal:3} in \cref{prop:top-principal} are still valid for any action \(\varphi \colon \Gamma \to {\rm Aut}(\CG)\) of a discrete group \(\Gamma\).
\end{remark}

\begin{proof} [Proof of \cref{cor:cartan-model}]
    The fact that \(C^*_r(\CG \rtimes_{\widetilde{\alpha}} \ZZ) \cong D \rtimes_\alpha \ZZ\) has already been discussed.
    Hence, all we have to do is to show that \(\CG \rtimes_{\widetilde{\alpha}} \ZZ\) is topologically principal, which follows from \cref{prop:top-principal} and \cite{rordamsalgebra}*{Lemma 6.9}.
    Whence, a posteriori \(\widetilde{D} \subseteq D \rtimes_\alpha \ZZ\) is maximal abelian.
    Lastly, notice \(\CG \rtimes_{\widetilde{\alpha}} \ZZ\) is minimal.
    Indeed, recall R\o rdam proves in \cite{rordamsalgebra} that \(D \rtimes_\alpha \ZZ\) is simple, and thus, by \cite{armstrong-uniqueness-2022}*{Corollary 6.10}, it follows that \(\CG \rtimes_{\widetilde{\alpha}} \ZZ\) must be minimal, for it is topologically principal.
\end{proof}

The most interesting elements in R\o rdam's algebra are the nonzero finite projection $Q=\mu_{\infty,0}(p_1)$ and the infinite projection $P=\mu_{\infty,0}(g)$ for some constant one-dimensional projection $g$ in $A$. Recall that $p_1\in A$ is defined by
\begin{equation}\label{eq:p_1}
  p_1(x)\coloneqq p(x_1)=\frac{1}{1+|x_1|^2}\begin{pmatrix}
    |x_1|^2&x_1\\
    \overline{x_1}&1
    \end{pmatrix}\in \CK
\end{equation}
for $x\in Z$. We observe the following.
  
\begin{proposition}\label[proposition]{prop:projections Cartan}
  The nonzero finite projection $Q$ does not belong to the Cartan subalgebra $\widetilde{D}$, while the infinite projection $P$ may do.
\end{proposition}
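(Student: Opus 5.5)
The plan has two independent halves: show that \(Q = \mu_{\infty,0}(p_1)\) is not in \(\widetilde{D}\), and show that \(P = \mu_{\infty,0}(g)\) lies in \(\widetilde{D}\) for a suitable choice of \(g\).

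\emph{The projection \(Q\).} I will use the conditional expectation. By \cref{theorem:faithful conditional expectation} and the remark after it, the expectations \(E_n\) are compatible with the inclusions \(D_n \subseteq D_{n+1}\), so they assemble to a conditional expectation \(E \colon D \to \widetilde{D}\) with \(E(d) = d\) for every \(d \in \widetilde{D}\). It therefore suffices to show \(E(Q) \neq Q\).

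Since \(Q \in A_0 \subseteq D_0\), formula \eqref{eq:E_n(a)} gives \(E(Q) = \mu_{\infty,0}(\overline{E}(p_1))\). Because \(\mu_{\infty,0}\) is injective (\cref{prop:properties rordam}), we are reduced to checking \(\overline{E}(p_1) \neq p_1\) in \(A\). This is a direct look at \eqref{eq:p_1}. Choose \(x \in Z\) with \(x_1 = 1\); then the off-diagonal entry of \(p_1(x)\) in position \((1,2)\) equals \(x_1/(1+\abs{x_1}^2) = 1/2 \neq 0\). Hence \(p_1(x)\) is not diagonal, \(p_1 \notin \widetilde{A}\), and \(E(Q) \neq Q\). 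So \(Q \notin \widetilde{D}\).

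A slightly sharper version of the same computation is available if wanted. The diagonal part \(\overline{E}(p_1)(x)\) has entries \(\abs{x_1}^2/(1+\abs{x_1}^2)\) and \(1/(1+\abs{x_1}^2)\). These are not \(\{0,1\}\)-valued, so \(\overline{E}(p_1)\) is not even a projection. The exact point evaluation above is all the proof needs.

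\emph{The projection \(P\).} Here the phrase ``may do'' refers to the freedom in choosing \(g\): \cref{prop:projections rordam} allows any constant one-dimensional projection. I will take \(g(x) \coloneqq e_{11}\) for all \(x \in Z\). This is constant, hence continuous with values in \(c_0(\NN) \subseteq \CK\), so \(g \in C(Z, c_0(\NN)) = \widetilde{A}\). Therefore \(P = \mu_{\infty,0}(g) \in \widetilde{A}_0 \subseteq \widetilde{D}\).

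It is worth adding that other admissible choices of \(g\), for instance the projection onto \((\delta_1 + \delta_2)/\sqrt{2}\), give an element outside \(\widetilde{D}\) by the same \(\overline{E}\) argument used for \(Q\). This explains why the statement says \(P\) ``may'' lie in \(\widetilde{D}\) rather than that it always does.

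I expect no real obstacle. The only point needing care is that \(E\) is well defined on all of \(D\) and fixes \(\widetilde{D}\). I will justify this by continuity from the compatible \(E_n\), which are contractive, together with density of \(\bigcup_n D_n\) in \(D\).
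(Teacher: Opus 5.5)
Your proposal is correct and follows the same route as the paper. You show that $p_1$ has a nonzero off-diagonal entry, so $Q\notin\widetilde{D}$, and you choose $g$ to be a constant diagonal rank-one projection, which puts $P$ in $\widetilde{A}_0\subseteq\widetilde{D}$. Your use of the limit expectation $E\colon D\to\widetilde{D}$, which fixes $\widetilde{D}$, together with injectivity of $\mu_{\infty,0}$, is a worthwhile addition: it rigorously justifies the step from $p_1\notin\widetilde{A}$ to $Q\notin\widetilde{D}$, which the paper asserts without argument.
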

\begin{proof}
  From \eqref{eq:p_1} it is clear that $p_1(x)$ is not diagonal for $x\in Z$, so $p_1\notin \widetilde{A}$. This implies that $Q=\mu_{\infty,0}(p_1) \not\in \widetilde{D}$.
  On the other hand, $g$ is a constant map from $Z$ to some one-dimensional projection $\widetilde{g}$ in $\CK$. If we choose $\widetilde{g}$ to be the projection onto the linear span of some $\delta_k$, then $\widetilde{g}$ is diagonal, which implies that $g\in \widetilde{A}$ and thus $P\in \widetilde{D}_0\subseteq \widetilde{D}$. If we choose $\widetilde{g}$ differently, it may no longer be diagonal, in which case $P\notin\widetilde{D}$.
\end{proof}

Lastly, in \cite{rordamsalgebra}*{Corollary~7.1} R\o rdam proves that $P(D\rtimes_\alpha\ZZ)P$ is a unital, separable, simple, nuclear \cstar{}algebra satisfying the UCT. Furthermore, he proves that $P(D\rtimes_\alpha\ZZ)P$ is infinite, but not purely infinite. Since the \cstar{}algebra is infinite, it is certainly not stably finite, so this \cstar{}algebra gives a counterexample for \Cref{question: intro 2}.
The following is thus our main result, and follows from \Cref{prop:projections Cartan} and \cite{rordamsalgebra}*{Corollary~7.1}.
\begin{corollary}[see \cref{thm-intro,cor-intro}] \label[corollary]{cor:thm-intro}
    There is a locally compact, \'etale, Hausdorff, second countable, amenable, minimal, and topologically principal groupoid \(\CH\) with compact unit space such that the unit \(1 \in C^*_r(\CH)\) is infinite, but \(C^*_r(\CH)\) also contains a non-zero finite projection.

    In particular, \(\CH\) does not have comparison in the sense of \cite{ma-comparison-2020}*{Definition 3.4}.
\end{corollary}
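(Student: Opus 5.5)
The plan is to take \(\CH\) to be the reduction of \(\CG \rtimes_{\widetilde{\alpha}} \ZZ\) to a compact open set of units corresponding to \(P\). Choose the constant one-dimensional projection \(g\) to be the projection onto \(\CC\delta_k\), so that \(P \in \widetilde{D}_0 \subseteq \widetilde{D}\) by \cref{prop:projections Cartan}. Under \(\psi_\alpha\), \(P\) is then a projection in \(C_0(\CGo)\), so \(P = 1_K\) for a compact open subset \(K \subseteq \CGo\). Put \(\CH \coloneqq (\CG \rtimes_{\widetilde{\alpha}} \ZZ)|_K = \{\gamma : r(\gamma), s(\gamma) \in K\}\). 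Then \(\CH\) is an open subgroupoid, hence locally compact, Hausdorff, étale and second countable, and its unit space \(K\) is compact. The standard identification \(C^*_r(\CH) \cong 1_K C^*_r(\CG \rtimes_{\widetilde{\alpha}} \ZZ) 1_K\), obtained by restricting functions in \(C_c\), gives \(C^*_r(\CH) \cong P(D \rtimes_\alpha \ZZ)P\), with unit \(1_K = P\).

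Next I transfer the dynamical properties from \cref{cor:cartan-model}.
\begin{enumerate}
\item \emph{Topological principality.} Isotropy in \(\CH\) at \(x \in K\) equals isotropy in the big groupoid at \(x\), so trivial-isotropy points are dense in the open set \(K\).
\item \emph{Minimality.} The \(\CH\)-orbit of \(x \in K\) is the big orbit intersected with \(K\), which is dense in \(K\) because \(K\) is open.
\item \emph{Amenability.} \(\CG\) is amenable because \(D\) is nuclear, being an inductive limit of type I algebras, and a Cartan pair with nuclear ambient algebra has an amenable Weyl groupoid. Then \(\CG \rtimes \ZZ\) is amenable as an extension by the amenable group \(\ZZ\); alternatively, use nuclearity of \(D \rtimes_\alpha \ZZ\) from \cref{theorem:rordams algebra}. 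Reductions to open subsets of amenable groupoids are amenable.
\end{enumerate}

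The projection statement then follows from \cref{cor:corner rordam}: the unit \(P\) is infinite, and \(P(D\rtimes_\alpha \ZZ)P\) contains a non-zero finite projection. For the ``in particular'', use \cite{ma-comparison-2020}*{Theorem B}. For minimal, topologically principal amenable groupoids with compact unit space, groupoid comparison implies that \(C^*_r(\CH)\) has strict comparison, or at least the dichotomy that it is stably finite or purely infinite. Since \(C^*_r(\CH)\) is simple, unital and infinite, it has no tracial states. Comparison would then force every nonzero projection to be properly infinite, which contradicts the existence of a nonzero finite projection.

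The main obstacle is making sure the hypotheses of \cite{ma-comparison-2020}*{Theorem B} are met exactly, and that its conclusion really rules out a finite projection when there are no traces, as in the footnote's reasoning. The other delicate points are citing the correct amenability reference for Weyl groupoids of nuclear Cartan pairs, and checking that \(K\) is compact open, which holds because \(P\) is a projection in \(C_0(\CGo)\).
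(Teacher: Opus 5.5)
Your proposal is correct and follows essentially the same route as the paper. You reduce \(\CG \rtimes_{\widetilde{\alpha}} \ZZ\) to the compact open set of units cut out by \(P \in \widetilde{D}\), identify \(C^*_r(\CH)\) with \(P(D \rtimes_\alpha \ZZ)P\), and then invoke R\o rdam's Corollary 7.1 together with Ma's Theorem B. The differences are minor: you get minimality and topological principality directly from restricting to an open set, where the paper cites Armstrong's Corollary 6.10 for minimality. Also, your reason for compactness of the unit space (a projection in \(C_0(\CGo)\) is the indicator of a compact open set) is the correct one, whereas the paper's ``closed, hence compact'' phrasing does not by itself justify it.
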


\begin{proof}
    This boils down to finding a groupoid model for corner $P(D\rtimes_\alpha\ZZ)P$.
    We know that $\widetilde{D}\cong C_0(\CGo)$.
    Should we choose the projection $g$ in such a way that $P\in \widetilde{D}$, then $P = \chi_\CU$ is the characteristic function of some clopen $\CU \subseteq \CGo$.
    We may define \(\CH\) as the restricted groupoid $(\CG\rtimes_{\widetilde{\alpha}}\ZZ)|_\CU$:
    \begin{align*}
        (\CG\rtimes_{\widetilde{\alpha}}\ZZ)|_\CU & \coloneqq \{(\gamma,k)\in\CG\rtimes_{\widetilde{\alpha}}\ZZ : r(\gamma)\in \CU \text{ and } \widetilde{\alpha}^k(s(\gamma))\in \CU\}.
    \end{align*}
    Then it follows that \(P(D\rtimes_\alpha\ZZ)P\cong C^*_r\big((\CG\rtimes_{\widetilde{\alpha}}\ZZ)|_\CU\big)\), as desired.
    Notice that some of the properties enjoyed by \(\CH \coloneqq (\CG \rtimes_{\widetilde{\alpha}} \ZZ)|_\CU\) appearing in the statement follow from R\o rdam's \cite{rordamsalgebra}*{Corollary~7.1}.
    For instance, as \(P(D \rtimes_\alpha \ZZ) P\) is separable, \(\CH\) must be second countable. Our previous discussions give that \(\CH\) is locally compact, \'etale and Hausdorff.
    Its unit space is \(\CU\), which is closed in a locally compact, Hausdorff space (namely \(\CGo\)). Hence \(\CH^{(0)} \cong \CU\) is compact.
    Moreover, \(\CH\) must be amenable, see \cite{AD-Renault:amenable-groupoids} (or \cite{buss-martinez-ess-ame-2024}*{Theorem 5.12} for a more recent proof), and also minimal \cite{armstrong-uniqueness-2022}*{Corollary 6.10} and topologically principal, see \cref{cor:cartan-model}.
    The statements about the projections are precisely the point of \cite{rordamsalgebra}*{Corollary~7.1}.
    Furthermore, the ``in particular'' statement follows from \cite{ma-comparison-2020}*{Theorem B} and the fact that \(C^*_r(\CH) \cong P(D \rtimes_\alpha \ZZ)P\) does not have strict comparison of positive elements.
\end{proof}

\bibliography{bibgrpd}

\end{document}